\newdimen\bibspace
\renewenvironment{thebibliography}[1]{%
 \section*{\refname %or \bibname if you use ``book'' as the documentclass
       \@mkboth{\MakeUppercase\refname}{\MakeUppercase\refname}}%
     \list{\@biblabel{\@arabic\c@enumiv}}%
          {\settowidth\labelwidth{\@biblabel{#1}}%
           \leftmargin\labelwidth
           \advance\leftmargin\labelsep
           \itemsep\bibspace
           \parsep\z@skip     %
           \@openbib@code
           \usecounter{enumiv}%
           \let\p@enumiv\@empty
           \renewcommand\theenumiv{\@arabic\c@enumiv}}%
     \sloppy\clubpenalty4000\widowpenalty4000%
     \sfcode`\.\@m}
    {\def\@noitemerr
      {\@latex@warning{Empty `thebibliography' environment}}%
     \endlist}
\numberwithin{equation}{section}
\newtheorem{theorem}{Theorem}[section]
\newtheorem{lemma}[theorem]{Lemma}
\newtheorem{proposition}[theorem]{Proposition}
\newtheorem{corollary}[theorem]{Corollary}
\newtheorem{remark}[theorem]{Remark}
\def\<{\langle}
\def\>{\rangle}
\def\bolangxian{\widetilde}
\def\hat{\widehat}
\def\d{\mathtt{d}}
\def\hat{\widehat}
\def\d{\mathtt{d}}
\def\bolangxian{\widetilde}
\begin{document}

\title{\textbf{Asymptotic behavior at infinity  of solutions of Lagrangian mean curvature equations
\footnote{Supported in part by National Natural Science Foundation of China (11871102 and 11631002).}
} \bigskip}

\author{Jiguang Bao\footnote{E-mail address: \textsf{jgbao@bnu.edu.cn}},\ \ Zixiao Liu\footnote{Corresponding author, E-mail address: \textsf{liuzixiao@mail.bnu.edu.cn}}\\
\small School of Mathematical Sciences, Beijing Normal University\\
\small Laboratory of Mathematics and Complex Systems, Ministry of Education\\
\small Beijing 100875, China
}

\maketitle

\begin{abstract}
We studied the asymptotic behavior of solutions with quadratic growth condition of a class of Lagrangian mean curvature equations $F_{\tau}(\lambda(D^2u))=f(x)$ in exterior domain, where $f$ satisfies a given asymptotic behavior at infinity. When $f(x)$ is a constant near infinity, it is not necessary to demand the quadratic growth condition anymore. These results are a kind of exterior Liouville theorem, and can also be regarded as an extension of theorems of Pogorelov \cite{Pogorelov}, Flanders \cite{Flanders} and Yuan \cite{Yu.Yuan1,Yu.Yuan2}.
 \\[1mm]
 Mathematics Subject Classification 2000: 35J60; 35B40\\[1mm]
 {\textbf{Keywords:}} Special Lagrangian
equation, Entire solutions, Asymptotic behaviors.
\end{abstract}

\setcounter{tocdepth}{1}
\tableofcontents

\section{Introduction}

In 2010, M.Warren \cite{Warren} first studied the minimal/maximal Lagrangian graph in $\left(\mathbb{R}^{n} \times \mathbb{R}^{n}, g_{\tau}\right)$, where
\begin{equation*}
g_{\tau}=\sin \tau \delta_{0}+\cos \tau g_{0}, \quad \tau \in\left[0, \frac{\pi}{2}\right],
\end{equation*}
is the linearly combined metric of standard Euclidean metric
\begin{equation*}
\delta_{0}=\sum_{i=1}^{n} d x_{i} \otimes d x_{i}+\sum_{j=1}^{n} d y_{j} \otimes d y_{j},
\end{equation*}
and the pseudo-Euclidean metric
\begin{equation*}
g_{0}=\sum_{i=1}^{n} d x_{i} \otimes d y_{i}+ \sum_{j=1}^{n} d y_{j} \otimes d x_{j}.
\end{equation*}
He showed that if $u\in C^2(\Omega),\ \Omega\subset\mathbb{R}^n$
is a solution of
\begin{equation}\label{equation}
F_{\tau}\left(\lambda\left(D^{2} u\right)\right)=C_0,\quad x\in\Omega,
\end{equation}
then the volume of
$(x,Du(x))$ is a maximal (for $\tau\in (0,\frac{\pi}{4})$) /minimal (for $\tau\in (\frac{\pi}{4},\frac{\pi}{2})$) among all homologous, $C^1$, space-like $n$-surfaces in $\left(\mathbb{R}^{n} \times \mathbb{R}^{n}, g_{\tau}\right)$,
where $C_0$ is a constant,  $\lambda(D^2u)=(\lambda_1,\lambda_2,\cdots,\lambda_n)$ are $n$ eigenvalues of the Hessian matrix $D^2u$ and  $$
F_{\tau}(\lambda):=\left\{
\begin{array}{ccc}
\displaystyle  \frac{1}{n} \sum_{i=1}^{n} \ln \lambda_{i}, & \tau=0,\\
\displaystyle  \frac{\sqrt{a^{2}+1}}{2 b} \sum_{i=1}^{n} \ln \frac{\lambda_{i}+a-b}{\lambda_{i}+a+b},
  & 0<\tau<\frac{\pi}{4},\\
  \displaystyle-\sqrt{2} \sum_{i=1}^{n} \frac{1}{1+\lambda_{i}}, & \tau=\frac{\pi}{4},\\
  \displaystyle\frac{\sqrt{a^{2}+1}}{b} \sum_{i=1}^{n} \arctan \displaystyle\frac{\lambda_{i}+a-b}{\lambda_{i}+a+b}, &
  \frac{\pi}{4}<\tau<\frac{\pi}{2},\\
  \displaystyle\sum_{i=1}^{n} \arctan \lambda_{i}, & \tau=\frac{\pi}{2},\\
\end{array}
\right.
$$
$a=\cot \tau, b=\sqrt{\left|\cot ^{2} \tau-1\right|}$.

If $\tau=0$, then (\ref{equation}) becomes the famous Monge-Amp\`ere equation $$
\det D^2u=e^{nC_0}.
$$
If $\tau=\frac{\pi}{2}$, then (\ref{equation}) becomes the special Lagrangian equation
\begin{equation*}
\sum_{i=1}^{n} \arctan \lambda_{i}\left(D^{2} u\right)=C_0.
\end{equation*}
In the same paper \cite{Warren}, M.Warren used change of variable and restated the Bernstein-type results of J\"orgens \cite{Jorgens}-Calabi \cite{Calabi}-Pogorelov \cite{Pogorelov}, Flanders \cite{Flanders}, and Yuan \cite{Yu.Yuan1,Yu.Yuan2} to give the following statement.
\begin{theorem}\label{Warren}
$u$ is a quadratic polynomial if $u \in C^{2}\left(\mathbb{R}^{n}\right)$ is a solution of (\ref{equation}) respectively, in the following cases, with
\begin{enumerate}
\item $D^2u>(-a+b)I$ for $\tau \in (0,\frac{\pi}{4})$;

\item $D^2u>0$ for $\tau=\frac{\pi}{4}$;

\item either
\begin{equation}\label{condition-temp-2}
    D^2u\geq -a I,
    \end{equation} or
   \begin{equation}\label{condition-temp-1}
    D^2u> -(a+b)I,\quad \left|\frac{bC_0}{\sqrt{a^2+1}}+\frac{n\pi}{4}\right|>\frac{n-2}{2}\pi,
    \end{equation}
for $\tau\in(\frac{\pi}{4},\frac{\pi}{2})$.
  \end{enumerate}
\end{theorem}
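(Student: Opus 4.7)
The plan is to reduce each of the three regimes to a classical Bernstein-type theorem by an explicit linear change of variables in $\mathbb{R}^n\times\mathbb{R}^n$, exactly as in Warren's observation. The key algebraic fact is that under the Möbius transformation $\bar\lambda_i=\frac{\lambda_i+a-b}{\lambda_i+a+b}$, the equation $F_\tau(\lambda)=C_0$ collapses to either $\sum\ln\bar\lambda_i=\bar C_0$ (Monge--Amp\`ere, for $\tau\in(0,\pi/4)$), $\sum\frac{1}{1+\lambda_i}=\bar C_0$ (Flanders' equation, at $\tau=\pi/4$), or $\sum\arctan\bar\lambda_i=\bar C_0$ (special Lagrangian, for $\tau\in(\pi/4,\pi/2)$), up to an explicit constant factor. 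Geometrically this transformation corresponds to a $g_\tau$-unitary rotation of the Lagrangian graph $(x,Du(x))$ to a new Lagrangian graph $(\bar x,D\bar u(\bar x))$, where $\bar u$ solves the corresponding classical equation on all of $\mathbb{R}^n$.

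The first step is to verify that each convexity hypothesis is precisely what is needed for the rotation to be globally admissible, i.e.\ for the rotated submanifold to remain the graph of $D\bar u$ for a single-valued $C^2$ function $\bar u$ on $\mathbb{R}^n$. For $\tau\in(0,\pi/4)$, $D^2u>(-a+b)I$ translates to $\bar\lambda_i>0$, so $\bar u$ is a strictly convex entire solution of a Monge--Amp\`ere equation. For $\tau=\pi/4$, the hypothesis $D^2u>0$ plays the analogous role in Flanders' setting. For $\tau\in(\pi/4,\pi/2)$, hypothesis (\ref{condition-temp-2}) gives $\bar\lambda_i\geq 0$, i.e.\ $D^2\bar u\geq 0$, while the weaker pointwise bound $D^2u>-(a+b)I$ in (\ref{condition-temp-1}) only guarantees that $\bar u$ is a classical entire solution, and the supplementary phase inequality is designed so that the constant $\bar C_0$ on the special Lagrangian side satisfies $|\bar C_0|>(n-2)\pi/2$.

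The second step is to invoke the appropriate entire-solution Bernstein theorems for $\bar u$: J\"orgens--Calabi--Pogorelov \cite{Pogorelov} in case (i), Flanders \cite{Flanders} in case (ii), and Yuan's two theorems in case (iii)---the semi-convex alternative (\ref{condition-temp-2}) matching \cite{Yu.Yuan1} and the supercritical-phase alternative (\ref{condition-temp-1}) matching \cite{Yu.Yuan2}. Each conclusion is that $\bar u$ is a quadratic polynomial. Pulling back by the linear rotation on $\mathbb{R}^{2n}$ sends quadratic Lagrangian graphs to quadratic Lagrangian graphs, so $u$ itself is quadratic in $x$, as claimed.

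The main obstacle, in my view, is not PDE analysis but the careful book-keeping of the correspondence between the hypotheses on $D^2u$ and the hypotheses of the classical theorems for $D^2\bar u$, using the explicit formulas $a=\cot\tau$, $b=\sqrt{|\cot^2\tau-1|}$. One must check in each of the three regimes that (a) the rotation produces no vertical tangent planes, (b) the resulting $\bar u$ is entire, and (c) its Hessian satisfies the convexity or phase constraints required by the corresponding Bernstein theorem---and in particular that the awkward-looking phase bound in (\ref{condition-temp-1}) is exactly the pullback of Yuan's supercritical-phase threshold $\frac{(n-2)\pi}{2}$. Once this algebraic matching is done separately in each case, the proof reduces to a direct citation of the classical results.
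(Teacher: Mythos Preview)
Your proposal is correct and follows exactly the approach the paper attributes to Warren: reduce via an explicit linear change of variables (the M\"obius/Lewy rotation on the eigenvalues) to the classical entire Bernstein theorems of J\"orgens--Calabi--Pogorelov, Flanders, and Yuan, with the hypotheses on $D^2u$ translating precisely into the convexity or phase hypotheses of those theorems. The paper does not give a self-contained proof of Theorem~\ref{Warren}; it is stated as Warren's result with this method summarized in one sentence, and the paper additionally remarks that most (though not all) cases can be recovered from its own exterior Liouville results (Theorems~\ref{ConstantRHS1}--\ref{ConstantRHS2}) together with the comparison principle.
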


The aim of this paper is to study the asymptotic behavior of classical solutions of equation (\ref{equation}) in exterior domain, including the stability with respect to the right hand side term.  There are plenty of marvelous work on these type of questions.
The classical theorem by J\"orgens \cite{Jorgens}, Calabi \cite{Calabi} and Pogorelov \cite{Pogorelov} states that any convex classical solution of $\operatorname{det}D^{2} u=1$ on $\mathbb{R}^{n}$ must be a quadratic polynomial. See Cheng and Yau \cite{ChengandYau}, Caffarelli \cite{7} and Jost and Xin \cite{JostandXin} for different proofs and extensions.
For Monge-Amp\`ere equations with constant right hand side term in exterior domain, there are exterior J\"orgens-Calabi-Pogorelov type results \cite{FMM99} (for $n=2$) and \cite{CL}, which state that all convex solutions are asymptotic to quadratic polynomials (for $n=2$ we need additional $\log$-term) near infinity.  Pioneered by Guti\`errez and Huang \cite{EntireParabolicMA}, Bernstein-type result of  parabolic convex entire solution also holds for parabolic Monge-Amp\`ere type equation $-u_t\det D^2u=1$ . Many other type of equations including parabolic version with constant right hand side term in exterior domain is also studied through different strategies, see for instance \cite{BCGJ,WYSpecialLagrangian,Yu.Yuan1,Yu.Yuan2,ParabolicMA_constant1,
ParabolicMA_Constant2,parabolicMA_constant3,parabolicSigmaK_constant}. Also, these type of equations with perturbed constant right hand side or periodic right hand side are also studied a lot, see for instance \cite{BLZ,Peroidic_parabolicMA,Peroidic_MA,Peroidic_MA2}.

First, we consider the equation with constant right hand side in exterior domain
\begin{equation}\label{equation_ConstantRHS}
F_{\tau}(\lambda(D^2u))=C_0,\quad\text{in}\quad \mathbb{R}^n\setminus\overline{B_1}.
\end{equation}
We prove the following results that provide an asymptotic behavior at infinity. The idea is to use Legendre transform and apply the strategies in \cite{ExteriorLiouville}.

From now on, we let $n\geq 3$,  $I$ stand for the unit $n\times n$ matrix and let $\mathtt{Sym}(n)$ denote the set of symmetric $n\times n$ constant matrix.
For any $C,D\in\mathbb{R}, \tau\in [0,\frac{\pi}{2}]$, let$$
\mathcal{A}_{\tau}(C,D):=\{A\in\mathtt{Sym}(n):F_{\tau}(\lambda(A))=C,\text{ and }A\geq  DI\}.
$$
To shorten notation, if $\tau\in[0,\frac{\pi}{2}]$ is fixed, we  write $\mathcal{A}(C,D)$ instead of $\mathcal{A}_{\tau}(C,D)$.

\begin{theorem}\label{ConstantRHS1}

  Let $u \in C ^ { 4 } \left( \mathbb { R } ^ { n }\setminus\overline{B_1} \right)$ be a classical solution of
  (\ref{equation_ConstantRHS}) for some constant $C_0$ with $\tau\in(0,\frac{\pi}{4})$
  and satisfy \begin{equation}\label{temp-7}
  D^2u>(-a+b)I,\quad\text{in}\quad\mathbb{R}^n\setminus\overline{B_1}.
  \end{equation}
  Then  there exist $\gamma \in \mathbb { R } , \beta \in \mathbb { R } ^ { n } \text { and } A\in \mathcal {A}(C_0,-a+b)$ such that \begin{equation}
  \label{ConvergenceSpeed1}
  \limsup _ { | x | \rightarrow \infty } | x | ^ { n - 2+k} \left| D^k\left( u ( x ) - \left( \frac { 1 } { 2 } x ^ { \prime } A x + \beta \cdot x + \gamma \right) \right) \right| < \infty,\quad k=0,1,2.
  \end{equation}
\end{theorem}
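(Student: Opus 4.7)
The plan is to reduce \eqref{equation_ConstantRHS} to the exterior Monge--Amp\`ere case and then invoke the asymptotic theorem of Caffarelli--Li \cite{ExteriorLiouville}. I first set $w(x):=u(x)+\tfrac{a}{2}|x|^2$, so that $\mu_i:=\lambda_i(D^2 w)=\lambda_i(D^2 u)+a$ and hypothesis \eqref{temp-7} becomes $D^2 w>bI$. The equation then rewrites as
\[
\sum_{i=1}^{n}\ln\frac{\mu_i-b}{\mu_i+b}=\frac{2bC_0}{\sqrt{a^2+1}}.
\]
Following Warren's interpretation of $F_{\tau}$ as the volume form on a Lagrangian graph in $(\mathbb{R}^n\times\mathbb{R}^n,g_{\tau})$, I then perform a hyperbolic--rotation change of variable $\bar{x}:=\tfrac{1}{\sqrt{2b}}(x+Dw(x))$; under $D^2 w>bI$ this is a diffeomorphism of $\mathbb{R}^n\setminus\overline{B_1}$ onto an exterior domain, it converts the graph of $Dw$ into the graph of $D\bar{w}$ for a new function $\bar{w}(\bar{x})$, and its Hessian obeys the matrix M\"obius relation
\[
D^2\bar{w}=(D^2w-bI)(D^2w+bI)^{-1}.
\]
Consequently $\det D^2\bar{w}\equiv C_1$ for an explicit positive constant determined by $C_0,a,b$, and $\bar{w}$ is strictly convex on the exterior domain.

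Next I apply the exterior Liouville theorem for Monge--Amp\`ere \cite{ExteriorLiouville} to $\bar{w}$, obtaining $\bar{\gamma}\in\mathbb{R}$, $\bar{\beta}\in\mathbb{R}^{n}$ and a symmetric positive-definite $\bar{A}$ with $\det\bar{A}=C_1$ such that
\[
\limsup_{|\bar{x}|\to\infty}|\bar{x}|^{n-2+k}\bigl|D^{k}\bigl(\bar{w}(\bar{x})-(\tfrac12\bar{x}^{T}\bar{A}\bar{x}+\bar{\beta}\cdot\bar{x}+\bar{\gamma})\bigr)\bigr|<\infty,\qquad k=0,1,2.
\]
Inverting both the change of variable and the M\"obius relation then gives that $u$ is asymptotic to $\tfrac12 x^{T}Ax+\beta\cdot x+\gamma$ in the sense of \eqref{ConvergenceSpeed1}, where $A=-aI+b(I+\bar{A})(I-\bar{A})^{-1}$; the inverse is well defined because the forward M\"obius map sends $\{M>bI\}$ into $\{0<N<I\}$. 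Since the change of variable is asymptotically linear with nondegenerate leading part, $|\bar{x}|\sim|x|$ at infinity and the rate $|\bar{x}|^{-(n-2+k)}$ transfers to the required $|x|^{-(n-2+k)}$ rate after an application of the chain rule up to second order. Finally, $A$ inherits $F_{\tau}(\lambda(A))=C_0$ from the equation and $A\geq(-a+b)I$ from the hypothesis (via the inverse M\"obius map), so $A\in\mathcal{A}(C_0,-a+b)$.

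The principal obstacle lies in the second step: verifying globally that the nonlinear map $x\mapsto\bar{x}$ sends exterior neighborhoods of infinity diffeomorphically onto exterior neighborhoods, and then showing that the $C^{k}$ asymptotic estimates for $\bar{w}$ pull back with exactly the stated decay for $u$. A secondary but related difficulty is controlling $D^2 w$ uniformly on the exterior, both from above and uniformly away from $bI$, so that the M\"obius inversion used to recover $A$ remains nonsingular at infinity; this will be handled by bootstrapping from the $C^{2}$ asymptotic for $\bar{w}$ through the inverse M\"obius relation. Once these geometric issues are settled, the analytic content of the theorem reduces entirely to \cite{ExteriorLiouville}.
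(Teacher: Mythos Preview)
Your reduction to an exterior Monge--Amp\`ere equation via the M\"obius relation
\[
D^2\bar w=(D^2w-bI)(D^2w+bI)^{-1},\qquad w=u+\tfrac{a}{2}|x|^2,
\]
is essentially the same transformation the paper uses, only packaged differently: the paper sets $\bar u=u+\tfrac{a+b}{2}|x|^2$, takes the Legendre dual $v$ so that $\tilde x=D\bar u=bx+Dw$, and then defines $\tilde u=\tfrac12|\tilde x|^2-2bv$, which gives $D\tilde u=-bx+Dw$ and exactly the eigenvalue relation $\tilde\lambda_i=\frac{\mu_i-b}{\mu_i+b}$. (Your formula $\bar x=\frac{1}{\sqrt{2b}}(x+Dw)$ has a harmless normalization slip; the map that produces the claimed M\"obius relation is proportional to $bx+Dw$, i.e.\ precisely the paper's $\tilde x$.) Your citation is also mislabeled: \cite{ExteriorLiouville} is Li--Li--Yuan, while the exterior Monge--Amp\`ere theorem is Caffarelli--Li \cite{CL}.

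Where your route genuinely diverges from the paper is in how to conclude. You propose to obtain the full quadratic asymptotic for $\bar w$ and then \emph{pull it back} through the nonlinear map $x\mapsto\bar x$; you correctly flag this as the principal obstacle, and there is a second subtlety you only partially name: one must know that the limiting matrix $\bar A$ has all eigenvalues strictly below $1$ (the paper's ``strip argument'') before the inverse M\"obius map makes sense, and this is not an automatic consequence of $0<D^2\bar w<I$ pointwise. The paper sidesteps both issues elegantly: it applies the general exterior Liouville theorem (Theorem~\ref{ExteriorLiouville_C}) to the transformed equation only to conclude that $D^2\tilde u$ has a limit, uses the strip argument to force that limit away from $I$, deduces that $D^2u$ is bounded on the exterior, and then applies Theorem~\ref{ExteriorLiouville_C} \emph{a second time directly to the original equation} $F_\tau(\lambda(D^2u))=C_0$, which is now uniformly elliptic and concave on the bounded Hessian range. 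This two--pass strategy avoids any transfer of asymptotics through the nonlinear change of variables. Your approach is workable, but more laborious; the paper's shortcut is worth knowing.
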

\begin{theorem}\label{ConstantRHS3}
  Let $u \in C ^ { 4 } \left( \mathbb { R } ^ { n } \setminus\overline{B_1}\right)$ be a classical solution of (\ref{equation_ConstantRHS}) for some constant $C_0$ with $\tau=\frac{\pi}{4}$ and satisfy
  \begin{equation}\label{temp-11}
  D^2u>-I,\quad\text{in}\quad\mathbb{R}^n\setminus\overline{B_1},
  \end{equation}
  Then there exist  $\gamma \in \mathbb { R } , \beta \in \mathbb { R } ^ { n } \text { and } A\in \mathcal {A}(C_0,-1)$ such that (\ref{ConvergenceSpeed1}) holds.
\end{theorem}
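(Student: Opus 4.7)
The plan is to reduce (\ref{equation_ConstantRHS}) at $\tau=\pi/4$, via the substitution $w(x):=u(x)+\tfrac{1}{2}|x|^2$ followed by a Legendre transform, to the Poisson equation $\Delta v=\mathrm{const}$ on an exterior domain of $\mathbb{R}^n$, for which the asymptotic expansion with sharp polynomial decay is classical, and then to transfer the expansion back to $u$.

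Setting $w:=u+\tfrac{1}{2}|\cdot|^2$, condition (\ref{temp-11}) gives $D^2w>0$ on $\mathbb{R}^n\setminus\overline{B_1}$. In terms of the eigenvalues $\mu_i:=1+\lambda_i$ of $D^2w$, equation (\ref{equation_ConstantRHS}) becomes
\[
\sum_{i=1}^{n}\frac{1}{\mu_i}=-\frac{C_0}{\sqrt{2}},
\]
which forces $C_0<0$ and the uniform lower bound $\mu_i\geq-\sqrt{2}/C_0$. Using interior $C^2$-estimates for the now-uniformly-elliptic equation, together with maximum-principle and barrier arguments on annuli $B_{2R}\setminus B_R$ in the style of \cite{ExteriorLiouville}, I would derive a uniform upper bound on $D^2 w$ and at most quadratic growth of $w$ at infinity. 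These estimates make $Dw$ a global $C^3$-diffeomorphism from $\mathbb{R}^n\setminus\overline{B_{R_0}}$ onto the complement of a bounded set in $\mathbb{R}^n$, for some large $R_0$.

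I would then pass to the Legendre transform $v(y):=x\cdot y-w(x)$ with $y=Dw(x)$. Since $D^2v(y)=(D^2w(x))^{-1}$ has eigenvalues $1/\mu_i$, $v$ satisfies
\[
\Delta v=\sum_{i=1}^{n}\frac{1}{\mu_i}=-\frac{C_0}{\sqrt{2}}
\]
on an exterior domain, and hence $h(y):=v(y)+\tfrac{C_0}{2n\sqrt{2}}|y|^2$ is harmonic there with at most quadratic growth. For $n\geq 3$, the classical expansion of such harmonic functions (Kelvin transform plus removable singularity and Schauder theory, as in \cite{ExteriorLiouville}) produces $\tilde A\in\mathtt{Sym}(n)$ with $\mathrm{tr}(\tilde A)=0$, $\tilde\beta\in\mathbb{R}^n$ and $\tilde\gamma\in\mathbb{R}$ such that
\[
\limsup_{|y|\to\infty}|y|^{n-2+k}\Bigl|D^{k}\bigl(h(y)-\tfrac{1}{2}y^{\mathrm T}\tilde A y-\tilde\beta\cdot y-\tilde\gamma\bigr)\Bigr|<\infty,\qquad k=0,1,2.
\]
Inverting the Legendre transform and matching leading quadratic parts produces $A$ via $A+I=\bigl(\tilde A-\tfrac{C_0}{n\sqrt{2}}I\bigr)^{-1}$; the convexity of $w$ gives $A\geq -I$, while $\mathrm{tr}(\tilde A)=0$ together with the Poisson identity guarantee $F_{\pi/4}(\lambda(A))=C_0$, so $A\in\mathcal{A}(C_0,-1)$. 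The parameters $\beta$ and $\gamma$ are read off from the linear and constant parts analogously, and since $|y|\sim|x|$ the $|y|^{2-n}$ decay of the harmonic remainder translates into (\ref{ConvergenceSpeed1}).

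The main obstacle I anticipate is the a priori upper Hessian bound and quadratic growth of $w$ in the second step: the one-sided condition (\ref{temp-11}) alone does not preclude blow-up of the largest $\mu_i$ at infinity, and without such control the Legendre transform cannot be carried out globally, nor can $h$ be shown to grow at most quadratically --- precisely the hypothesis required for the harmonic expansion with $O(|y|^{2-n})$ remainder.
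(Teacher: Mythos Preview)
Your overall route---pass to $w=u+\tfrac12|x|^2$, Legendre transform, and study the resulting Poisson equation---is exactly the paper's. But the step you flag as the ``main obstacle'' is a genuine gap in the order you have chosen, and the paper's resolution is precisely to reorder the argument so that this obstacle disappears.

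Concretely: you claim that after the lower bound $\mu_i\geq-\sqrt{2}/C_0$ the operator is ``now uniformly elliptic'' and you can run interior $C^2$-estimates to get an upper bound on $D^2w$. This is circular. The operator $F(\mu)=\sum_i 1/\mu_i$ has $\partial F/\partial\mu_i=-1/\mu_i^2$, so uniform ellipticity requires a \emph{two-sided} bound on $\mu_i$; the missing upper bound on $\mu_i$ is exactly the upper Hessian bound you are trying to prove. Barrier arguments on annuli in the style of \cite{ExteriorLiouville} do not circumvent this, because they too presuppose uniform ellipticity.

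The paper avoids the problem by performing the Legendre transform \emph{first}, using only strict convexity $D^2w>0$ (after a smooth convex extension across $B_1$). On the transform side the Hessian satisfies $D^2V=(D^2w)^{-1}$, whose eigenvalues $1/\mu_i$ are positive and, by the equation $\sum_i 1/\mu_i=-C_0/\sqrt{2}$, automatically bounded \emph{above}. Thus $\|D^2V\|_{L^\infty}$ is bounded with no additional work, and Theorem~\ref{ExteriorLiouville_C} applies directly to the Poisson equation for $V$, yielding a limit matrix $\widetilde A$ for $D^2V$. The ``strip argument'' then shows $\widetilde A>0$ (if some eigenvalue vanished, the corresponding coordinate $x_i=D_iV(\widetilde x)$ would stay bounded, which is impossible for an exterior domain). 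Invertibility of $\widetilde A$ now gives the upper bound on $D^2u$ \emph{a posteriori}. With both bounds in hand, $F_{\pi/4}$ is uniformly elliptic and concave on the range of $D^2u$, and a second application of Theorem~\ref{ExteriorLiouville_C}, this time directly to $u$, yields (\ref{ConvergenceSpeed1}).

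So the fix is: drop your step~2, do the Legendre transform immediately, and recover the upper Hessian bound from the $V$-side rather than the $u$-side. This also spares you from having to invert the full quadratic--linear--constant expansion through the Legendre transform; the paper simply reapplies the exterior Liouville theorem to $u$ once $D^2u$ is known to be bounded.
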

\begin{theorem}\label{ConstantRHS2}
  Let $u \in C ^ { 4 } \left( \mathbb { R } ^ { n } \setminus\overline{B_1}\right)$ be a classical solution of (\ref{equation_ConstantRHS}) for some constant $C_0$ with $\tau\in(\frac{\pi}{4},\frac{\pi}{2})$ and
  satisfy  \begin{equation}\label{condition-temp-3}
  D^2u\geq -(a-\varepsilon_0)I,
  \end{equation}
  for some $\varepsilon_0>0$ or (\ref{condition-temp-1}).
  Then there exist $\gamma \in \mathbb { R } , \beta \in \mathbb { R } ^ { n } \text { and }$ $A\in\mathcal{A}(C_0,-a+\varepsilon_0)$ or $ A\in \mathcal {A}(C_0,-(a+b))$ respectively such that (\ref{ConvergenceSpeed1}) holds.
\end{theorem}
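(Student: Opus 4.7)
The plan is to reduce equation (\ref{equation_ConstantRHS}) to the standard special Lagrangian equation by a linear change of variables, and then adapt the Legendre-transform-based exterior asymptotic analysis of \cite{ExteriorLiouville}, in the same spirit as the proofs of Theorems \ref{ConstantRHS1} and \ref{ConstantRHS3}.

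First I would set
\[
w(x):=\frac{1}{b}\!\left(u(x)+\frac{a}{2}|x|^{2}\right),
\]
so that $\lambda_{i}(D^{2}w)=\bigl(\lambda_{i}(D^{2}u)+a\bigr)/b$. Using the elementary identity $\arctan\!\frac{\mu-1}{\mu+1}=\arctan\mu-\frac{\pi}{4}$, which holds exactly when $\mu>-1$, a direct computation converts $F_{\tau}(\lambda(D^{2}u))=C_{0}$ into
\[
\sum_{i=1}^{n}\arctan\lambda_{i}(D^{2}w)=\bar C_{0},\qquad \bar C_{0}:=\frac{bC_{0}}{\sqrt{a^{2}+1}}+\frac{n\pi}{4},
\]
on $\mathbb{R}^{n}\setminus\overline{B_{1}}$. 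Under this change of variables hypothesis (\ref{condition-temp-3}) becomes the uniform convexity $D^{2}w\geq(\varepsilon_{0}/b)I$, while hypothesis (\ref{condition-temp-1}) is equivalent to $D^{2}w>-I$ together with $|\bar C_{0}|>\frac{n-2}{2}\pi$, precisely the critical-phase special Lagrangian setting appearing in item (3) of Theorem \ref{Warren}.

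Next I would derive the exterior asymptotic behavior for $w$. In the uniformly convex case, the Legendre transform $w^{\ast}$ is well defined on the exterior of a ball, is itself uniformly convex, and—using $\arctan(1/\mu)=\pi/2-\arctan\mu$—solves a special Lagrangian equation of phase $\frac{n\pi}{2}-\bar C_{0}$; the framework of \cite{ExteriorLiouville} then produces a quadratic asymptote $(\tilde A,\tilde\beta,\tilde\gamma)$ for $w^{\ast}$ that transfers back to an analogous asymptote for $w$, with limit Hessian $\tilde A$ positive definite and $\sum\arctan\lambda_{i}(\tilde A)=\bar C_{0}$. In the critical-phase case (\ref{condition-temp-1}), where $w$ is only semiconvex, I would combine the sharp convexity-from-phase rigidity of \cite{Yu.Yuan1,Yu.Yuan2}—which under $|\bar C_{0}|>(n-2)\pi/2$ upgrades the semiconvexity $D^{2}w>-I$ into a genuine positive lower bound on the Hessian outside a compact set—with the same Legendre-transform argument as above.

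Finally, given an asymptotic expansion $w(x)=\frac{1}{2}x^{\prime}\tilde A x+\tilde\beta\cdot x+\tilde\gamma+O(|x|^{2-n})$ with matching decay for $D^{k}w$ ($k=1,2$), setting $A:=b\tilde A-aI$, $\beta:=b\tilde\beta$, $\gamma:=b\tilde\gamma$ and unwinding $u=bw-\frac{a}{2}|x|^{2}$ yields (\ref{ConvergenceSpeed1}); the corresponding lower bound on $\tilde A$ then translates into $A\in\mathcal{A}(C_{0},-a+\varepsilon_{0})$ or $A\in\mathcal{A}(C_{0},-(a+b))$, as desired. The main obstacle will be the second case: after the reduction one only has $D^{2}w>-I$ rather than strict convexity, so the Legendre transform used in \cite{ExteriorLiouville} does not apply directly. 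Bootstrapping semiconvexity to uniform convexity on the exterior of a larger ball—using the critical-phase rigidity together with exterior Pogorelov-type estimates—while keeping careful track of the bound so that the limit matrix $A$ indeed lies in $\mathcal{A}(C_{0},-(a+b))$ (and not merely in some larger set) is where the bulk of the technical work should concentrate.
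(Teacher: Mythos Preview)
Your reduction $w(x)=b^{-1}(u(x)+\tfrac{a}{2}|x|^{2})$ and the identity converting $F_{\tau}=C_{0}$ into $\sum\arctan\lambda_{i}(D^{2}w)=\bar C_{0}$ are exactly what the paper does (its Propositions \ref{Result-Part1} and \ref{Result-Part2}, via identity (\ref{identity-inuse})). For the case $D^{2}u\geq -(a-\varepsilon_{0})I$, your Legendre-transform argument for a uniformly convex $w$ is also precisely the paper's route (its Theorem \ref{Yu.Yuan_Corollary}). So on those two points the approaches coincide.

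The divergence is in the critical-phase case (\ref{condition-temp-1}). There is no obstacle here: the paper simply quotes Theorem 1.1 of \cite{ExteriorLiouville} (stated in the paper as Theorem \ref{Yu.Yuan_Result}), which already gives the full exterior asymptotic expansion for $\sum\arctan\lambda_{i}(D^{2}w)=\bar C_{0}$ under the sole hypothesis $|\bar C_{0}|>(n-2)\pi/2$, with \emph{no} convexity assumption on $w$. The condition $D^{2}u>-(a+b)I$ in (\ref{condition-temp-1}) is used only to make the identity (\ref{identity-inuse}) valid, not to feed into any Legendre transform. So your planned ``bootstrapping semiconvexity to uniform convexity on the exterior of a larger ball'' is unnecessary, and the technical work you anticipate there does not exist.

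Moreover, the way you sketched that bootstrap is shaky: \cite{Yu.Yuan1,Yu.Yuan2} are Bernstein theorems for \emph{entire} solutions; they do not provide an exterior Pogorelov-type estimate that upgrades $D^{2}w>-I$ to a uniform positive lower bound outside a compact set. If you insisted on avoiding the black-box citation of \cite{ExteriorLiouville}, the correct tool inside their proof is a Lewy--Yuan rotation in the Lagrangian plane (which produces a bounded-Hessian solution without needing convexity), not the Legendre transform. But the cleanest fix is simply to invoke Theorem \ref{Yu.Yuan_Result} directly, as the paper does.
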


By comparison principle as in \cite{CL}, the global Theorem \ref{Warren} can be obtained by these exterior behavior results i.e. Theorems \ref{ConstantRHS1}, \ref{ConstantRHS3} and \ref{ConstantRHS2}, apart from the situation of $\tau\in(\frac{\pi}{4},\frac{\pi}{2})$ with only (\ref{condition-temp-2}) instead of (\ref{condition-temp-3}).

Second, we also consider  equation  with right hand side of a perturbation of suitable constant as the following
\begin{equation}\label{equation_NONConstant}
  F _ { \tau } \left( \lambda(D ^ { 2 } u) \right) =\frac{\sqrt{a^2+1}}{2b} \sum _ { i = 1 } ^ { n } \ln \frac { \lambda _ { i } + a - b } { \lambda _ { i } + a + b } = f ( x ) , \quad \text { in } \quad \mathbb { R } ^ { n },
\end{equation}
for $\tau\in(0,\frac{\pi}{4})$.
Especially in this situation, $a>b$ always holds.

The main results in this part include

\begin{theorem}\label{result1}
Let $u \in C ^ { 4 } \left( \mathbb { R } ^ { n } \right)$ be a classical solution of (\ref{equation_NONConstant}) and satisfy \begin{equation}\label{boundedHessian}
(-a+b)I<D^2u\leq MI,
\end{equation}
$\text{for some constant $M$.}$
Assume that $
  f \in C ^ { 2} \left( \mathbb { R } ^ { n } \right)
  $,and \begin{equation}\label{Low-Regular-Condition}
  \limsup _ { | x | \rightarrow \infty } | x | ^ { n + \varepsilon } | D^k( f ( x ) - f ( \infty ) ) | < \infty,\quad k=0,1,2,
  \end{equation}
  for some $\varepsilon>0$.
Then there exist $\gamma \in \mathbb { R } , \beta \in \mathbb { R } ^ { n } \text { and } A\in \mathcal {A}(f(\infty),-a+b)$ such that (\ref{ConvergenceSpeed1}) holds.
\end{theorem}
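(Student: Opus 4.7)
My plan is to reduce \eqref{equation_NONConstant} to a perturbed Monge-Amp\`ere equation, by first completing $u$ to a convex function through a quadratic shift and then applying a Legendre transform, and afterwards to invoke the exterior asymptotic theorem for Monge-Amp\`ere equations with right-hand side decaying to a positive constant, as in \cite{BLZ} (see also \cite{CL} for the constant case). Throughout, $\tau\in(0,\pi/4)$, so $a>b>0$.

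Set $U(x):=u(x)+\frac{a-b}{2}|x|^{2}$, so $D^{2}U=D^{2}u+(a-b)I$. Hypothesis \eqref{boundedHessian} gives $0<D^{2}U\le(M+a-b)I$, and because each summand $\ln\frac{\lambda_{i}+a-b}{\lambda_{i}+a+b}$ in $F_{\tau}$ diverges to $-\infty$ as $\lambda_{i}\searrow-(a-b)$ while $f$ is bounded, one actually obtains $D^{2}U\ge\delta I$ for some $\delta>0$. Hence $DU:\mathbb{R}^{n}\to\mathbb{R}^{n}$ is a bi-Lipschitz diffeomorphism, and the Legendre transform $V$ of $U$ satisfies $D^{2}V(y)=(D^{2}U(x))^{-1}$ with $y=DU(x)$. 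Setting $W(y):=V(y)+\frac{1}{4b}|y|^{2}$, the identity $D^{2}W=(D^{2}U)^{-1}+\frac{1}{2b}I$ and a direct eigenvalue computation give
$$
\det D^{2}W(y)\;=\;\frac{1}{(2b)^{n}}\prod_{i=1}^{n}\frac{\lambda_{i}+a+b}{\lambda_{i}+a-b}\;=\;\frac{1}{(2b)^{n}}e^{-2bf(x(y))/\sqrt{a^{2}+1}}\;=:\;h(y),
$$
so $W$ is a smooth strictly convex solution of a Monge-Amp\`ere equation on $\mathbb{R}^{n}$, with $D^{2}W$ uniformly bounded above and below by positive constants.

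Since $DU$ is bi-Lipschitz, $|y|\sim|x|$ as $|x|\to\infty$; together with \eqref{Low-Regular-Condition} and the chain rule this yields $h(y)\to h_{\infty}:=\frac{1}{(2b)^{n}}e^{-2bf(\infty)/\sqrt{a^{2}+1}}$ and
$$
\bigl|D^{k}(h(y)-h_{\infty})\bigr|=O(|y|^{-n-\varepsilon'}),\qquad k=0,1,2,
$$
for some $\varepsilon'>0$, where the control of $D^{2}h$ invokes interior $C^{2,\alpha}$ estimates for $W$ (equivalently $U$) to bound derivatives of the change-of-variables Jacobian $D^{2}V$. The exterior asymptotic result for perturbed Monge-Amp\`ere of \cite{BLZ} (valid since $n\ge3$) then applies to $W$ and produces $\tilde{A}\in\mathtt{Sym}(n)$ with $\tilde{A}>\frac{1}{2b}I$ and $\det\tilde{A}=h_{\infty}$, together with $\tilde{\beta}\in\mathbb{R}^{n}$ and $\tilde{\gamma}\in\mathbb{R}$, such that
$$
\limsup_{|y|\to\infty}|y|^{\,n-2+k}\Bigl|D^{k}\bigl(W(y)-\bigl(\tfrac{1}{2}y'\tilde{A}y+\tilde{\beta}\cdot y+\tilde{\gamma}\bigr)\bigr)\Bigr|<\infty,\qquad k=0,1,2.
$$

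It remains to unwind the transformations. Writing $B:=\tilde{A}-\frac{1}{2b}I>0$, the function $V=W-\frac{1}{4b}|y|^{2}$ is asymptotic to $\frac{1}{2}y'By+\tilde{\beta}\cdot y+\tilde{\gamma}$, whose Legendre transform is $\frac{1}{2}(x-\tilde{\beta})'B^{-1}(x-\tilde{\beta})-\tilde{\gamma}$. Using the bi-Lipschitz bounds on $DU$ to convert $|y|^{-n+2+k}$-decay into $|x|^{-n+2+k}$-decay (the cross-term coming from the nonlinearity of the inverse Legendre transform being of strictly higher order when $n\ge3$) one gets
$$
U(x)\;=\;\tfrac{1}{2}(x-\tilde{\beta})'B^{-1}(x-\tilde{\beta})-\tilde{\gamma}+O(|x|^{-(n-2)}),
$$
with the analogous derivative bounds, and subtracting $\frac{a-b}{2}|x|^{2}$ yields \eqref{ConvergenceSpeed1} for $u$ with $A:=B^{-1}-(a-b)I$. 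A short algebraic check converts $\det\tilde{A}=h_{\infty}$ into $F_{\tau}(\lambda(A))=f(\infty)$, and $B>0$ forces $A>(-a+b)I$, so $A\in\mathcal{A}(f(\infty),-a+b)$ as required. The step I expect to be most delicate is this transfer of the asymptotic expansion, including first and second derivative decay, from the $y$-variables to the $x$-variables through the \emph{nonlinear} diffeomorphism $y=DU(x)$: it forces a careful chain-rule bookkeeping coupled with interior Schauder estimates for the Monge-Amp\`ere equation, and is where the second-derivative hypothesis in \eqref{Low-Regular-Condition} really comes into play.
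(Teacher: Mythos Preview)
Your reduction to Monge--Amp\`ere via the quadratic shift and Legendre transform is correct, and the algebraic identification $A=B^{-1}-(a-b)I\in\mathcal{A}(f(\infty),-a+b)$ checks out. The transfer of the asymptotic expansion through the bi-Lipschitz map $y=DU(x)$ is also sound in principle; the bookkeeping you flag as delicate can indeed be done.

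The real gap is the citation of \cite{BLZ}. The Bao--Li--Zhang theorem (stated in this paper as Theorem~\ref{ORI-BLZ}) requires that the Monge--Amp\`ere right-hand side have at least \emph{three} derivatives decaying at the rate $|y|^{-\zeta-k}$ with $\zeta>2$ and $m\ge 3$; see condition~\eqref{High-Regular-Condition} and Remark~\ref{example}. Your hypothesis~\eqref{Low-Regular-Condition} only controls $D^k f$ for $k=0,1,2$, and hence you can only control $D^k h$ for $k=0,1,2$ (and even the $k=2$ case needs a small bootstrap to bound $D^3V$, which you gloss over). The third derivative of $f$ is genuinely used in the BLZ bootstrap (their Lemma~2.2, our Lemma~\ref{bootstrap}): it supplies the H\"older continuity of the inhomogeneity $f_2$ needed for Schauder. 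So the black box you want to invoke does not accept your input.

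The Monge--Amp\`ere statement that \emph{would} close your argument is exactly Theorem~\ref{WorkonMongeAmpere} of this paper, but that theorem is new here and is proved with the same machinery that proves Theorem~\ref{result1} directly; citing it would be circular. The paper's route avoids this by using the Legendre transform only to obtain the Hessian limit $D^2u\to A$ with a H\"older rate (Theorem~\ref{LimitofHessian}), and then working with the linearization of the original $F_\tau$ equation around $\tfrac12 x'Ax$: barrier functions (Lemmas~\ref{barrier1}--\ref{barrier2}) sharpen the rate, after which Green-function estimates for nonhomogeneous linear equations (Theorem~\ref{exteriorLiouville}, Corollary~\ref{ExteriorLiouville_NonPositive}) capture the linear and constant parts. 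This approach needs only $D^0,D^1,D^2$ of $f$ to decay, matching~\eqref{Low-Regular-Condition}, and never has to push a full asymptotic expansion through the nonlinear Legendre map.
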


\begin{theorem}\label{result2}
  Let $u \in C ^ {m+2} \left( \mathbb { R } ^ { n } \right)$ be a classical solution of  (\ref{equation_NONConstant})
  and satisfy (\ref{boundedHessian}).
  Assume that $f\in C^m(\mathbb{R}^n)$, and \begin{equation}\label{High-Regular-Condition}
  \limsup_ { | x | \rightarrow \infty } | x | ^ { \zeta + k } \left| D ^ { k } ( f ( x ) - f(\infty) ) \right| < \infty ,\quad\forall k = 0,1 , \cdots , m,
  \end{equation}
  for some $\zeta>2,m\geq 3$.
  Then there exist $\gamma \in \mathbb { R } , \beta \in \mathbb { R } ^ { n }$ and $A \in \mathcal {A}(f(\infty),-a+b)$ such that \begin{equation}\label{ConvergenceSpeed2}
  \limsup _ { | x | \rightarrow \infty } | x | ^ {\min\{\zeta,n\} - 2 +k} \left| D^k\left( u ( x ) - \left( \frac { 1 } { 2 } x ^ { \prime } A x + \beta \cdot x + \gamma \right) \right) \right| < \infty,\ \ \forall k=0,1,\cdots,m+1.
  \end{equation}

\end{theorem}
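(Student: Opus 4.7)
The plan is to adapt the Legendre-transform strategy of \cite{ExteriorLiouville} already used in the proof of Theorem \ref{result1}, but to track decay rates and higher-order derivatives more carefully so as to accommodate the relaxed hypothesis $\zeta>2$ (possibly smaller than $n$) and to exploit the extra regularity $f\in C^m$.

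First I would produce a first-order expansion $u(x)=\frac{1}{2}x^\prime A x+\beta\cdot x+\gamma+v(x)$, with limiting Hessian $A\in\mathcal{A}(f(\infty),-a+b)$ and $v,Dv,D^2v\to 0$ at infinity. In the case $\zeta>n$ this is immediate from Theorem \ref{result1} applied with $\varepsilon=\zeta-n>0$. In the case $2<\zeta\leq n$ Theorem \ref{result1} is not directly available and a separate argument is needed: the idea is to Legendre-transform the uniformly convex shift $\tilde u:=u+\frac{a}{2}|x|^2$ (for which $D^2\tilde u>bI>0$ by (\ref{boundedHessian})), converting the exterior problem to a local regularity problem near a fixed point. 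Interior estimates combined with the pinching then force $D^2\tilde u$, and hence $D^2 u$, to admit a limit at infinity, which pins down $A$, $\beta$, and $\gamma$.

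With the expansion in hand, rewrite the equation as $F_\tau(\lambda(A+D^2v))=f(x)$ and Taylor-expand $F_\tau$ to first order in $D^2v$; this yields a constant-coefficient linear elliptic equation $\mathcal{L}v=(f-f(\infty))+N(D^2v)$, where $\mathcal{L}$ is the linearization of $F_\tau$ at $A$ (uniformly elliptic by the pinching) and $N$ is at least quadratic in $D^2v$. The Green's function of $\mathcal{L}$ on an exterior domain decays like $|x|^{2-n}$, so the Green's representation yields $|v(x)|=O(|x|^{2-\min\{\zeta,n\}})$. The higher-derivative estimates in (\ref{ConvergenceSpeed2}) up to order $m+1$ then follow by bootstrap: apply interior Schauder estimates to $\mathcal{L}v$ on balls of radius $|x|/2$, using the $C^m$-decay of $f-f(\infty)$ and the already-established decay of $v$ to absorb the nonlinear error $N$.

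The main obstacle is the case $2<\zeta\leq n$ where Theorem \ref{result1} is unavailable: one must show, under only a mild decay rate of $f-f(\infty)$, that $D^2u$ admits a limit at infinity. The Legendre transform is what makes this tractable, by converting exterior asymptotic behavior into boundary regularity at a single interior point, where Caffarelli--Schauder type machinery applies. Once this is done, the exponent $\min\{\zeta,n\}-2$ in (\ref{ConvergenceSpeed2}) reflects the competition between the source decay $\zeta-2$ and the harmonic Green's decay $n-2$, and must be tracked explicitly through the iterative Schauder/Green's function argument.
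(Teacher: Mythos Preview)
Your overall architecture --- Legendre transform to pin down $A$, then linearize around $A$, then bootstrap via Schauder estimates on rescaled balls combined with Green's-function representations for the constant-coefficient operator $\mathcal{L}=D_{M_{ij}}F_\tau(A)\partial_{ij}$ --- is essentially the paper's approach. But two points in your outline need correction.

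First, the order in which you extract $A,\beta,\gamma$ is backwards. In the regime $2<\zeta\le n$ you cannot produce $\beta$ and $\gamma$ at the outset; only $A$ is available at that stage (via the Legendre-transformed Monge--Amp\`ere equation and Theorem~\ref{LimitofHessian}, which gives $|D^2u-A|\le C|x|^{-\alpha}$ for some small $\alpha>0$). The paper works with $w:=u-\tfrac12 x^TAx$ (not with the full $v=u-Q$), first proves by scaling and Schauder (Lemma~\ref{roughestimate_2}) that $|D^kw|\le C|x|^{2-k-\varepsilon_\zeta}$ for $k\le m+1$, and then runs an explicit \emph{rate-doubling} bootstrap (Lemma~\ref{bootstrap}): writing the twice-differentiated equation as $F_{M_{ij}}(A)\partial_{ij}h_1=f_2$ with $|f_2|\le C|x|^{-2-2\varepsilon}$, subtracting a Newtonian potential, and using a barrier to gain $|D^2w|\le C|x|^{-2\varepsilon}$. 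Only after finitely many doublings bring the rate past $1$ does one invoke Gilbarg--Serrin to show $Dw\to\beta$, set $w_1=w-\beta\cdot x$, iterate again, and finally obtain $\gamma$. Your proposal to ``first produce the expansion'' and only then analyze decay would be circular in this regime.

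Second, your description of the Legendre transform as ``converting the exterior problem to a local regularity problem near a fixed point'' is not what happens here. The transform (\ref{LegendreTransform}) sends the exterior problem for $F_\tau$ to another \emph{exterior} Monge--Amp\`ere problem $\det D^2\tilde u=g(\tilde x)$, with the key gain being that $g$ inherits the decay of $f$ (via the bi-Lipschitz equivalence $|x|\sim|\tilde x|$ from (\ref{boundedHessian})); one then feeds this into the Caffarelli--Li/Bao--Li--Zhang machinery for Monge--Amp\`ere on exterior domains (Theorem~\ref{corollary_estimate3}) to obtain the Hessian limit. There is no inversion to a neighborhood of a point. Once this is clarified and the extraction of $\beta,\gamma$ is moved to the end of the iteration, your sketch matches the paper's proof.
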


\begin{remark}\label{example}

We can see that these two conditions on $f$
in Theorems \ref{result1} and  \ref{result2}
doesn't include each other. The condition (\ref{Low-Regular-Condition}) holds true for more ``un-regular'' $f$ than condition (\ref{High-Regular-Condition}) , which is originally proposed by Bao-Li-Zhang \cite{BLZ}. Condition (\ref{Low-Regular-Condition}) only demands the behavior of up to 2-order derivative of $f$, while condition (\ref{High-Regular-Condition}) demands the behavior of at least 3-order derivative of $f$.
Generally speaking, for $n\geq 5$, the condition (\ref{Low-Regular-Condition}) in Theorem \ref{result1} demands a higher convergence speed than condition (\ref{High-Regular-Condition}) in Theorem \ref{result2}, but the demand on regularity is the other way around.

On the one side, we  take $f(x)=|x|^{-(2+\varepsilon)}-1$ for $|x|\geq1,\ 1>\varepsilon>0$. Then it satisfies the condition (\ref{High-Regular-Condition})  in Theorem \ref{result2} by picking $\zeta:=2+\varepsilon>2$ but doesn't satisfies the condition (\ref{Low-Regular-Condition}) in Theorem \ref{result1}.

On the other side, we take $
  f(x):=e^{-m|x|}\sin(e^{|x|})-1,\ |x|\geq 1,\ m\geq 3.
  $ Then it satisfies the condition (\ref{Low-Regular-Condition}) in Theorem \ref{result1} because it has exponential decay up to $(m-1)$-order derivatives. But it doesn't satisfies the condition (\ref{High-Regular-Condition}) in Theorem \ref{result2} because its $m$-order derivative doesn't admits a limit at infinity.

\end{remark}

Next, we can weaken the assumption (\ref{boundedHessian}) in Theorems \ref{result1} and \ref{result2}
into the linear growth of gradient
\begin{equation}\label{Condition-LinearGrowth}
|Du(x)|\leq C(1+|x|),\quad\forall\ x\in\mathbb{R}^n,
\end{equation}
for some constant $C$. And further more, this linear growth of gradient condition can be weakened into the quadratic growth condition of $u$ itself
\begin{equation}\label{temp-9}
|u(x)|\leq C(1+|x|^2),\quad\forall x\in\mathbb{R}^n,
\end{equation} for some constant $C$. Since in the following Theorems \ref{lineargrowth_Result}, \ref{quadraticgrowth_Result} and Remark \ref{exteriorDomain}, we always assume a lower bound of Hessian to make the equation elliptic, we can weaken the additional condition (\ref{temp-9})
into \begin{equation}\label{Condition-QuadraticGrowth}
u(x)\leq C(1+|x|^2),\quad\forall x\in\mathbb{R}^n,
\end{equation} for some constant $C$.

To be more precise, we have the following stronger theorems than Theorems \ref{result1} and \ref{result2}.
\begin{theorem}[Linear Growth of Gradient]\label{lineargrowth_Result}
Let $u \in C ^ { 4 } \left( \mathbb { R } ^ { n } \right)$ be a classical solution of (\ref{equation_NONConstant}) with $D^2u>(-a+b)I$ and satisfy (\ref{Condition-LinearGrowth}).
Suppose that $f$ satisfies the assumptions as in Theorem \ref{result1} (resp. Theorem \ref{result2}), then we have the same results as in Theorem \ref{result1} (resp. Theorem \ref{result2}).
\end{theorem}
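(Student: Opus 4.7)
The plan is to reduce Theorem~\ref{lineargrowth_Result} to Theorem~\ref{result1} (respectively Theorem~\ref{result2}) by showing that the linear gradient-growth hypothesis (\ref{Condition-LinearGrowth}), together with the one-sided bound $D^2u>(-a+b)I$, already forces the two-sided Hessian bound (\ref{boundedHessian}). Once a uniform upper bound $D^2u\leq MI$ is available, the statement is an immediate quotation of the earlier theorems; no further argument about the asymptotic expansion is needed.

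First, integrating (\ref{Condition-LinearGrowth}) along radial segments gives the quadratic growth $|u(x)-u(0)-Du(0)\cdot x|\leq C(1+|x|^2)$, so after subtracting an affine function (which does not change $D^2u$) we may assume $|u(x)|\leq C(1+|x|^2)$. The function $v(x):=u(x)+\tfrac{a-b}{2}|x|^2$ is then strictly convex, has linearly growing gradient, and has quadratic growth; its Legendre transform $v^{*}$ is a smooth strictly convex function defined on an open subset of the dual space that contains a neighborhood of infinity, because the image of $Dv$ grows at least linearly. Under the Legendre duality $D^2v(x)\cdot D^2v^{*}(p)=I$, equation~(\ref{equation_NONConstant}) translates into a Monge-Amp\`ere-type equation for $v^{*}$ whose right hand side is bounded from both sides by positive constants; this is the very reduction already used in the proofs of Theorems~\ref{ConstantRHS1}-\ref{ConstantRHS2} following the strategy of \cite{ExteriorLiouville}. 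Introducing a blow-down $v^{*}_R(z):=v^{*}(Rz)/R^2$ on $B_2$ and applying an interior Pogorelov/Caffarelli-Li type $C^{1,1}$ estimate uniformly in $R$ produces a bound $|D^2v^{*}(p)|\leq C$ for $|p|$ large, and hence, by Legendre duality, $D^2v(x)\geq cI$ on a neighborhood of infinity for some $c>0$, i.e.\ $D^2u(x)\geq(-a+b+c)I$ there. With this strengthened lower bound the equation (\ref{equation_NONConstant}) becomes uniformly elliptic, and since $F_\tau$ is concave in $\lambda$ for $\tau\in(0,\tfrac{\pi}{4})$ on the set $\{\lambda_i>-a+b\}$, an Evans-Krylov-type interior estimate yields the matching upper bound $D^2u\leq MI$ on a neighborhood of infinity; on the remaining compact set the bound is immediate from the $C^4$ regularity of $u$. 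This gives (\ref{boundedHessian}), and the conclusion then follows from Theorem~\ref{result1} (respectively Theorem~\ref{result2}).

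The hard part will be the blow-down Hessian estimate for the Legendre transform. One must verify that the image $Dv(\mathbb{R}^n)$ is large enough for $v^{*}$ to be defined on an exterior subset of the dual space, that the rescaled Monge-Amp\`ere-type equations for $v^{*}_R$ have uniformly bounded right hand sides so that Pogorelov/Caffarelli-Li applies uniformly in $R$, and that the transformed data retains enough regularity for the interior estimates to apply. These are the usual checks needed to import the exterior Monge-Amp\`ere machinery of \cite{ExteriorLiouville} into the present Lagrangian setting, and once they are in place the upgrade to (\ref{boundedHessian}) and the subsequent reduction to Theorems~\ref{result1}/\ref{result2} are routine.
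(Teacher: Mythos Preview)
Your reduction strategy --- first manufacture the two-sided Hessian bound (\ref{boundedHessian}) from (\ref{Condition-LinearGrowth}), then quote Theorems~\ref{result1}/\ref{result2} --- is a genuinely different route from the paper's. The paper observes that in the proof of Theorem~\ref{LimitofHessian} (and hence of Theorems~\ref{result1}/\ref{result2}) the \emph{only} place the upper bound $D^2u\leq MI$ is used is to establish the linear equivalence (\ref{linear-of-X}) between $|x|$ and $|\widetilde x|$ in the Legendre-transformed variables. Lemma~\ref{lineargrowth} then shows by a three-line triangle-inequality computation that (\ref{Condition-LinearGrowth}) gives (\ref{linear-of-X}) directly, without any interior Hessian estimate. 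With (\ref{linear-of-X}) in hand, the entire machinery of Sections~\ref{ConvergenceOfHessian}--\ref{ProofOfResult2} runs verbatim. So the paper short-circuits your program: it never needs to prove $D^2u\leq MI$ to obtain Theorem~\ref{lineargrowth_Result}.

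Your approach is not wrong in spirit --- indeed the paper does separately establish the global Hessian bound in Theorem~\ref{mainestimate} --- but the method there is a compactness argument terminating in the Caffarelli--Guan--Ma constant rank theorem, not a Pogorelov/Caffarelli--Li blow-down as you propose. The blow-down you sketch has nontrivial gaps: you assert that $Dv(\mathbb R^n)$ contains a neighbourhood of infinity because ``the image of $Dv$ grows at least linearly,'' but linear growth of $|Du|$ is an \emph{upper} bound, not a lower one; the missing lower bound on $D^2v$ (hence on $|Dv(x)|$) has to come from the equation and the boundedness of $f$, via an argument like (\ref{ExistenceOfDelta}). Also, your shift $v=u+\tfrac{a-b}{2}|x|^2$ does not match the paper's Legendre set-up (\ref{LegendreTransform}), which uses $\bar u=u+\tfrac{a+b}{2}|x|^2$ so that the transformed equation becomes precisely Monge--Amp\`ere; with your shift the transformed operator is different and the Caffarelli--Li interior estimates do not apply out of the box. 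These are fixable, but the resulting argument is considerably heavier than the paper's one-lemma reduction.
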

\begin{theorem}[Quadratic Growth of Solution]\label{quadraticgrowth_Result}  Let $u \in C ^ { 4 } \left( \mathbb { R } ^ { n } \right)$ be a classical solution of (\ref{equation_NONConstant}) with $D^2u>(-a+b)I$ and satisfy (\ref{Condition-QuadraticGrowth}).
Suppose that $f$ satisfies the assumptions as in Theorem \ref{result1} (resp. Theorem \ref{result2}) , then we have the same results as in Theorem \ref{result1} (resp. Theorem \ref{result2}).
\end{theorem}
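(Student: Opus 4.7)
The plan is to reduce Theorem \ref{quadraticgrowth_Result} to Theorem \ref{lineargrowth_Result} by showing that, under the strict Hessian lower bound $D^2u>(-a+b)I$, the one-sided quadratic growth hypothesis (\ref{Condition-QuadraticGrowth}) already forces the linear gradient growth (\ref{Condition-LinearGrowth}). Once this implication is in hand, Theorem \ref{lineargrowth_Result} applied with the corresponding hypotheses on $f$ (from either Theorem \ref{result1} or Theorem \ref{result2}) yields the desired asymptotic expansion.

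To carry out the reduction, I would first convexify by setting $w(x):=u(x)+\tfrac{a-b}{2}|x|^2$. Since $\tau\in(0,\tfrac{\pi}{4})$ gives $a=\cot\tau>1$ and $b=\sqrt{a^{2}-1}<a$, the coefficient $a-b$ is positive. The assumption $D^2u>(-a+b)I$ becomes $D^2w>0$, so $w\in C^4(\mathbb{R}^n)$ is strictly convex. From (\ref{Condition-QuadraticGrowth}) we obtain the upper bound $w(x)\le C_1(1+|x|^2)$, while the tangent-plane inequality at the origin provides the free linear lower bound $w(x)\ge w(0)+Dw(0)\cdot x\ge -C_2(1+|x|)$.

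Next I would extract the gradient estimate. For any fixed $x\in\mathbb{R}^n$ with $Dw(x)\neq 0$, apply the convexity inequality with the unit vector $v=Dw(x)/|Dw(x)|$ and the scale $t=1+|x|$:
\[
(1+|x|)\,|Dw(x)|=t\,|Dw(x)|\le w(x+tv)-w(x)\le C_1\bigl(1+(1+2|x|)^2\bigr)+C_2(1+|x|),
\]
which yields $|Dw(x)|\le C_3(1+|x|)$ for every $x$ (the case $Dw(x)=0$ is trivial). Translating back, $|Du(x)|\le |Dw(x)|+(a-b)|x|\le C_4(1+|x|)$, which is exactly (\ref{Condition-LinearGrowth}). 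Theorem \ref{lineargrowth_Result} then completes the proof.

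This reduction is essentially free; the real substance is already contained in Theorem \ref{lineargrowth_Result}, which in turn rests on the Legendre-transform analysis used for Theorems \ref{result1} and \ref{result2}. The only mildly delicate point is that we assume only a \emph{one-sided} bound on $u$, and it is crucial that convexity of $w$ automatically supplies the matching linear lower bound so that no additional hypothesis on $u$ from below is required; the strictness of $D^2u>(-a+b)I$ (rather than just $\ge$) is what makes $w$ genuinely convex and legitimizes the tangent-plane inequality at the origin.
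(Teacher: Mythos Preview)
Your reduction is correct and the argument is clean. It is, however, genuinely different from the route the paper takes. The paper also reduces Theorem~\ref{quadraticgrowth_Result} to Theorem~\ref{lineargrowth_Result} by proving that the (one-sided) quadratic growth bound implies the linear gradient bound, but it does this through PDE interior estimates: after translating to a convex solution $v=u+\tfrac{a-b-\delta}{2}|x|^2$ of the transformed equation $G_{\tau}(\lambda(D^2v))=g(x)$, it invokes Y.Y.~Li's interior gradient estimate (Theorem~\ref{estimate_YYLi}) on rescaled balls $B_{|x|/2}(x)$, checking the structural hypotheses (A)--(E) for $G_{\tau}$ and using the quadratic bound to control $\|\widetilde v\|_{L^\infty(B_1)}$ uniformly; this yields $|Dv(x)|\le C(1+|x|)$ and hence $|Du(x)|\le C(1+|x|)$ (Theorem~\ref{gradientestimate}).

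Your approach bypasses all of this: once $w=u+\tfrac{a-b}{2}|x|^2$ is convex, the single line $(1+|x|)\,|Dw(x)|\le w\bigl(x+(1+|x|)\tfrac{Dw(x)}{|Dw(x)|}\bigr)-w(x)$ together with the quadratic upper bound and the tangent-plane lower bound gives the gradient estimate immediately, with no use of the equation. This is strictly more elementary and works for any strictly convex function with one-sided quadratic growth. The paper's route, by contrast, illustrates a scaling\,+\,interior-estimate template that would still apply in settings where no global convexity is available, but for the present theorem your argument is both shorter and sharper.
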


As in the famous paper of Bao-Chen-Guan-Ji \cite{BCGJ}, such a quadratic growth condition is common and necessary for general k-Hessian equations and Hessian quotient equations. See for instance \cite{parabolicSigmaK_constant,2-Hessian,Nonpolynomial}. And as for the classical special Lagrangian equation, the linear growth condition of gradient is also studied by M.Warren and Y.Yuan in \cite{WYSpecialLagrangian}. Similar conditions are also considered in different paper, see for instance \cite{Quotient-quadratic-LRW,Quotient-quadratic-Complex,Quotient-quadratic-Double,Quotient-quadratic-what,Quotient-Hessian}.

\begin{remark}\label{exteriorDomain}

  By using extension theorems, all the results in Theorems \ref{result1}, \ref{result2}, \ref{lineargrowth_Result} and \ref{quadraticgrowth_Result} still hold when the equation (\ref{equation_NONConstant}) only holds in exterior domain i.e.
  \begin{equation}\label{equation_exteriorDomain}
  F_{\tau}\left(\lambda(D^{2} u)\right)=\frac{\sqrt{a^2+1}}{2b}\sum_{i=1}^{n} \ln \frac{\lambda_{i}+a-b}{\lambda_{i}+a+b}=f(x), \quad \text { in } \quad \mathbb{R}^{n}\setminus\overline{\Omega},
  \end{equation}
  where $\Omega$ is a bounded domain in $\mathbb{R}^n$.
\end{remark}

Apart from the asymptotic behavior of solutions of (\ref{equation_NONConstant}) we stated earlier, the method we adapt here also works for Monge-Amp\`ere equation  (i.e. $\tau=0$ situation) without demanding the boundedness of Hessian.
The Theorem \ref{ORI-BLZ} below is proved by Bao-Li-Zhang \cite{BLZ} and we obtain a new result in Theorem \ref{WorkonMongeAmpere} under a different setting.
\begin{theorem}\label{ORI-BLZ}Let $u \in C ^ { 0 } \left( \mathbb { R } ^ { n } \right)$ be a convex viscosity solution of
\begin{equation}\label{Monge-Ampere}
  \det D^2u=f(x),\quad\text{in}\quad\mathbb{R}^n,
  \end{equation}
  where  $
  f \in C ^ { 0 } \left( \mathbb { R } ^ { n } \right)
  $
  , $D^mf\ (m\geq 3)$ exist outside a compact subset of $\mathbb{R}^n$  and (\ref{High-Regular-Condition}) holds for some given $f(\infty)\in(0,+\infty)$.
  Then there exist $\gamma \in \mathbb { R } , \beta \in \mathbb { R } ^ { n }$ and $A\in \mathcal { A }_0(\frac{1}{n}\exp(f(\infty)),0)$ such that (\ref{ConvergenceSpeed2}) holds.
\end{theorem}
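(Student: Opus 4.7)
The plan is to bootstrap from the Caffarelli-Li exterior J\"orgens-Calabi-Pogorelov theorem (applied to the constant right-hand side $\det D^2 u_0 = \exp(f(\infty))$) to the perturbed equation $\det D^2 u = \exp(f(x))$ via a linearization-and-iteration scheme. After an affine normalization $x \mapsto Tx$ with $T^\prime T$ chosen so that the target matrix becomes $I$ and $\exp(f(\infty))$ becomes $1$, convexity of $u$ together with the constant-right-hand-side exterior result produces, as a first approximation, a quadratic polynomial $P(x) = \frac{1}{2}|x|^2 + \beta \cdot x + \gamma$ to which $u$ is close at infinity, and in particular $D^2 u \to I$ as $|x| \to \infty$. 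This reduction also needs the fact that outside a large ball $f$ is bounded between two positive constants, so Caffarelli's regularity theory applies and the sections of $u$ at infinity are comparable to ellipsoids.

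Setting $v := u - P$, I would expand $\det(I + D^2 v) = \exp(f(x))$ around $D^2 v = 0$ to obtain
\begin{equation*}
\Delta v = \bigl(f(x) - f(\infty)\bigr) + \bigl(\exp(f(x)) - 1 - (f(x) - f(\infty))\bigr) - Q(D^2 v),
\end{equation*}
where $Q$ is at least quadratic in $D^2 v$. Once $|D^2 v(x)|$ decays algebraically, this is essentially Poisson's equation with right-hand side decaying like $|x|^{-\min\{\zeta, 2\alpha\}}$ outside a large ball. Standard exterior-domain theory for the Laplacian, via spherical-harmonic expansion, then splits $v$ into a harmonic part with controlled growth (absorbable into adjustments of $\beta, \gamma$ and, if a quadratic tail appears, into $A$) plus a particular solution decaying like $|x|^{-(\min\{\zeta, n\} - 2)}$. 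The higher-derivative decay up to order $m+1$ in (\ref{ConvergenceSpeed2}) then follows from scaling and interior Schauder/Caffarelli regularity for Monge-Amp\`ere on dyadic annuli, using $f \in C^m$ and (\ref{High-Regular-Condition}).

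The main obstacle is to upgrade the qualitative statement $D^2 u \to I$ into a quantitative algebraic decay $|D^2 v(x)| \leq C|x|^{-\alpha}$ that makes the linearization a true small perturbation of $\Delta$. To obtain this, I would rescale on dyadic annuli via $\tilde u_R(y) := R^{-2} u(Ry)$: the rescaled equations $\det D^2 \tilde u_R = \exp(f(Ry))$ have right-hand sides converging to $1$ in $C^m$ at a rate controlled by (\ref{High-Regular-Condition}), and Caffarelli's $C^{2,\alpha}$ estimates together with stability of the Dirichlet problem translate this convergence of right-hand sides into convergence of $\tilde u_R$ to $\frac{1}{2}|y|^2$ in $C^{2,\alpha}$ on $B_2 \setminus \overline{B_1}$ at an explicit rate. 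Unscaling yields the desired polynomial decay of $|D^2 v|$, after which the bootstrap closes and the full expansion (\ref{ConvergenceSpeed2}) is obtained by iterating the linear exterior estimate until the available decay rate $\min\{\zeta, n\} - 2$ is saturated.
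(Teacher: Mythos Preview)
This theorem is quoted from Bao--Li--Zhang \cite{BLZ} rather than proved in the paper, but the paper does reproduce the BLZ machinery (Lemma~\ref{roughestimate}, Theorem~\ref{corollary_estimate3}, Lemmas~\ref{roughestimate_2} and~\ref{bootstrap}) when proving the analogous Theorem~\ref{result2}. Your outline matches that machinery in its essentials: level-set normalization to get a rough quadratic approximation with algebraic error, rescaling on dyadic annuli combined with Caffarelli's interior $C^{2,\alpha}$ estimate to extract polynomial decay of $D^2u-A$, then a linearize-and-bootstrap loop to push the decay rate up and peel off the linear and constant parts.

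Two small points. First, a slip: the equation is $\det D^2u=f(x)$, not $\det D^2u=\exp(f(x))$; the target is $\det A=f(\infty)$ (the $\exp$ in the statement of $\mathcal{A}_0$ appears to be a typo in the paper itself). Second, where you invoke spherical-harmonic expansion for the exterior Poisson problem, BLZ (and the paper in Lemma~\ref{bootstrap}) instead convolve with the Green's function of the constant-coefficient limit operator and compare via the maximum principle; both routes give the same $O(|x|^{2-\min\{\zeta,n\}})$ particular solution, so this is a matter of taste. Your initial step is stated a bit loosely---you cannot literally apply the Caffarelli--Li constant-RHS theorem to $u$---but since you also invoke the section geometry (``sections comparable to ellipsoids''), which is exactly the content of Lemma~\ref{roughestimate}, the intended argument is the right one.
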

\begin{theorem}\label{WorkonMongeAmpere}Let $u \in C ^ { 0 } \left( \mathbb { R } ^ { n } \right)$ be a convex viscosity solution of (\ref{Monge-Ampere}),\ where $
  f \in C ^ { 0 } \left( \mathbb { R } ^ { n } \right)
  $
  , $D^2f$ exist outside a compact subset of $\mathbb{R}^n$  and there exists a $\varepsilon>0$ such that (\ref{Low-Regular-Condition}) holds for some given $f(\infty)\in(0,+\infty)$.
  Then there exist $\gamma \in \mathbb { R } , \beta \in \mathbb { R } ^ { n }$ and $A\in \mathcal {A}_0(\frac{1}{n}\exp(f(\infty)),0)$ such that (\ref{ConvergenceSpeed1}) holds.
\end{theorem}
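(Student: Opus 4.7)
The plan is to parallel the Bao--Li--Zhang proof of Theorem \ref{ORI-BLZ}, but to replace their Schauder bootstrap (which requires $D^m f$ for $m \geq 3$) by a single-shot Newtonian-potential estimate that only consumes the two derivatives of $f$ supplied by (\ref{Low-Regular-Condition}). The analogy is with how Theorem \ref{result1} relates to Theorem \ref{result2} on the Lagrangian side: we trade faster decay of $f - f(\infty)$ for lower regularity demands.

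First I would secure a preliminary quadratic asymptote. Since $u$ is a convex viscosity solution and $f \in C^0(\mathbb{R}^n)$ has positive limit $f(\infty)$ with $|f - f(\infty)| \to 0$ by (\ref{Low-Regular-Condition}), $\det D^2 u$ is bounded between positive constants outside a large ball. The exterior J\"orgens--Calabi--Pogorelov theorem of Caffarelli--Li \cite{CL}, applied to the asymptotically constant right-hand side, produces $A \in \mathcal{A}_0(\tfrac{1}{n}\exp(f(\infty)), 0)$, $\beta \in \mathbb{R}^n$, $\gamma \in \mathbb{R}$ and a preliminary power decay
\[
u(x) - \bigl(\tfrac{1}{2} x^{\prime} A x + \beta \cdot x + \gamma\bigr) = o(|x|^2), \qquad |x| \to \infty.
\]
Since $f \in C^2$ near infinity and is bounded between positive constants there, Caffarelli's regularity upgrades $u$ to $C^{2,\alpha}_{\mathrm{loc}}$ outside a compact set, so subsequent computations may be performed classically.

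Next I would linearise. After an affine change of variables diagonalising $A$, set $w := u - P$ where $P := \tfrac{1}{2} x^{\prime} A x + \beta \cdot x + \gamma$. The elementary identity
\[
\det D^2 u - \det A = \int_0^1 \mathrm{cof}(A + t D^2 w)^{ij} \, dt \cdot D_{ij} w
\]
converts the Monge--Amp\`ere equation into a linear uniformly elliptic equation
\[
a^{ij}(x) D_{ij} w(x) = f(x) - f(\infty)
\]
outside a large ball, whose coefficients $a^{ij}(x)$ converge to $\mathrm{cof}(A)$ at infinity at a rate controlled by the preliminary $o(1)$ decay of $D^2 w$. The right-hand side satisfies $|D^k (f - f(\infty))(x)| \leq C|x|^{-n-\varepsilon-k}$ for $k = 0, 1, 2$, and a Newtonian-potential / Kelvin-transform argument in the spirit of \cite{ExteriorLiouville} then yields the sharp bound
\[
|D^k w(x)| \leq C |x|^{-(n-2+k)}, \qquad k = 0, 1, 2,
\]
which is exactly (\ref{ConvergenceSpeed1}). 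Differentiating the linearised equation twice to get the $k = 1, 2$ estimates consumes precisely the two derivatives of $f$ provided by (\ref{Low-Regular-Condition}).

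The main obstacle is closing the decay estimate at the threshold regularity without any room to bootstrap. In Theorem \ref{ORI-BLZ} a Schauder iteration on dyadic shells freely trades extra regularity of $f$ for additional decay of $w$; here the estimate has to be achieved directly at order $k = 2$, so the nonlinear error terms in the linearisation of $\det$ must be absorbed using only the preliminary decay from Step~1, and the potential bound must be sharp rather than iterative. The secondary technical pinch is obtaining that preliminary power decay $|w(x)| \leq C|x|^{2-\mu}$ from only the $C^0$ piece of (\ref{Low-Regular-Condition}), so that $a^{ij}(x)$ is close enough to its constant limit for the Newtonian-potential estimate to apply.
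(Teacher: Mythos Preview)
Your plan is broadly the paper's: obtain a quantitative Hessian limit, linearise, then use Green's-function/potential estimates together with the exterior Liouville theorem of \cite{ExteriorLiouville} to pin down $\beta$ and $\gamma$. The paper does not write a separate proof of Theorem~\ref{WorkonMongeAmpere}; it is the proof of Theorem~\ref{result1} (Section~\ref{ProofofResult1}) transported to $\log\det$, with Theorem~\ref{corollary_estimate3} supplying the preliminary rate $|D^2u-A|\le C|x|^{-\alpha}$ directly (no Legendre transform needed). Your ``secondary pinch'' is handled there: Lemma~\ref{roughestimate}/Corollary~\ref{corollary_estimate} give $|u\circ T-\tfrac12 |x|^2|\le C|x|^{2-\varepsilon}$ from the $k=0$ part of (\ref{Low-Regular-Condition}) alone (via $f^{1/n}-1\in L^n$), and Theorem~\ref{corollary_estimate3} upgrades this to a Hessian rate using $k=0,1$. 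The raw Caffarelli--Li theorem you cite treats constant right-hand side; the variable-$f$ input you actually need is this BLZ level-set lemma.

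The genuine gap is your $k=2$ step. Differentiating the \emph{linearised} equation $a^{ij}D_{ij}w=f-f(\infty)$ introduces derivatives of $a^{ij}=\int_0^1\mathrm{cof}(A+tD^2w)\,dt$, hence $D^3u$, which is uncontrolled; and a single Newtonian-potential pass on the constant-coefficient rewriting only improves $|D^2w|$ from $|x|^{-\alpha}$ to $|x|^{-2\alpha}$ --- that is precisely the bootstrap of Lemma~\ref{bootstrap} you are trying to avoid and which costs the extra derivative of $f$ in Theorem~\ref{ORI-BLZ}. The paper's device is to differentiate the \emph{original} equation twice and use the concavity of $\log\det$ on positive matrices: for each $e\in\partial B_1$,
\[
\widehat a^{\,ij}(x)\,D_{ij}(u_{ee})\;\ge\;(\log f)_{ee}(x),\qquad \widehat a^{\,ij}:=D_{M_{ij}}(\log\det)(D^2u),
\]
an \emph{inequality} that involves no third derivatives of $u$. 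The $k=2$ case of (\ref{Low-Regular-Condition}) bounds the right side by $C|x|^{-n-\varepsilon}$, and the explicit barrier $|x|^{2-n}-|x|^{2-n-\varepsilon'}$ of Lemma~\ref{barrier2} then yields $v_{ee}\le C|x|^{2-n}$ in one stroke; uniform ellipticity of $\overline a^{\,ij}D_{ij}v=f-f(\infty)$ supplies the matching lower bound, so $|D^2v|\le C|x|^{2-n}$. Only now do the coefficients satisfy (\ref{short-RangeCoefficient}) at rate $|x|^{2-n}$, and Corollary~\ref{ExteriorLiouville_NonPositive} (built on Lemma~\ref{existence}) extracts first $\beta$ and then $\gamma$, as in your outline.
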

\begin{remark}

The difference between the conditions on $f$ in Theorems \ref{ORI-BLZ} and \ref{WorkonMongeAmpere} is the same as in Remark \ref{example}.
As in Remark \ref{exteriorDomain}, all the results in Theorems \ref{ORI-BLZ} and  \ref{WorkonMongeAmpere} still hold when the equation (\ref{Monge-Ampere}) only happens on exterior domain i.e.
\begin{equation}\label{Monge-Ampere-ext}
\det D^2u=f(x),\quad\text{in}\quad\mathbb{R}^n\setminus\overline{\Omega},
\end{equation}
where $\Omega$ is a bounded domain in $\mathbb{R}^{n}$.
\end{remark}
For perturbed right hand side case of  $\tau=\frac{\pi}{4}$ , since the equation after Legendre transform becomes Laplacian operator, the proof is even simpler than the proof of Theorems \ref{result1} and \ref{result2}. To be more precise, we obtain the following  results.
\begin{theorem}\label{result3} Let $u\in C^4(\mathbb{R}^n)$ be a classical solution of the following equation with $\tau=\frac{\pi}{4}$,
\begin{equation}\label{equation_middletau}
  F_{\tau}(\lambda(D^2u))=-\sqrt{2}\sum_{i=1}^n\frac{1}{1+\lambda_i(D^2u)}=f(x),\quad\text{in}
  \quad\mathbb{R}^n,
\end{equation}
and satisfy \begin{equation}\label{boundedHessian_2}
-I<D^2u\leq MI,
\end{equation}
for some constant $M$. Assume that $f\in C^2(\mathbb{R}^n)$, and satisfy (\ref{Low-Regular-Condition}) for some $\varepsilon>0$. Then there exist $\gamma\in\mathbb{R},\ \beta\in\mathbb{R}^n$ and $A\in\mathcal{A}(f(\infty),-1)$ such that (\ref{ConvergenceSpeed1}) holds.
\end{theorem}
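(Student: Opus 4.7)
The plan is to follow the hint in the introduction: the Legendre transform converts (\ref{equation_middletau}) into a linear Poisson equation, and one then mimics the scheme already developed for Theorems~\ref{result1} and \ref{result2}. First I would upgrade the strict bound in (\ref{boundedHessian_2}) to a uniform one: since $\sum_{i=1}^n(1+\lambda_i(D^2u))^{-1}=-f/\sqrt{2}$ and $f$ is bounded by (\ref{Low-Regular-Condition}), each summand is bounded, hence $1+\lambda_i\geq c>0$ uniformly. Setting $w(x):=u(x)+\tfrac12|x|^2$ yields $cI\leq D^2w\leq(1+M)I$, so $Dw$ is a global $C^3$-diffeomorphism of $\mathbb{R}^n$. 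Defining $v(y):=x\cdot y-w(x)$ with $y=Dw(x)$, the identity $D^2v(y)=(D^2w(x))^{-1}$ turns (\ref{equation_middletau}) into
$$\Delta v(y)=-\tfrac{1}{\sqrt{2}}f(x(y))=:g(y),\qquad y\in\mathbb{R}^n,$$
with $v\in C^4(\mathbb{R}^n)$ and $D^2v$ uniformly positive definite and bounded.

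Second I would analyse this Poisson equation. Bi-Lipschitzness of $x\leftrightarrow y$ transfers (\ref{Low-Regular-Condition}) to $|D^k(g-g(\infty))(y)|=O(|y|^{-n-\varepsilon-k})$ for $k=0,1,2$, where $g(\infty):=-f(\infty)/\sqrt{2}$. Put $Q(y):=\tfrac{g(\infty)}{2n}|y|^2$ and $\tilde v:=v-Q$, so that $\Delta\tilde v=g-g(\infty)$ is in $L^1$. The Newton potential $N:=\Gamma\ast(g-g(\infty))$ (where $\Gamma$ is the fundamental solution of $\Delta$ on $\mathbb{R}^n$) satisfies $|D^kN(y)|=O(|y|^{2-n-k})$ for $k=0,1,2$, by splitting the convolution into the regions $|z|<|y|/2$ and $|z|\geq|y|/2$. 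Then $h:=\tilde v-N$ is harmonic on $\mathbb{R}^n$ with bounded Hessian, so by the classical Liouville theorem for harmonic functions of at-most-quadratic growth, $h$ is a polynomial of degree $\leq 2$. Combining,
$$v(y)=\tfrac12 y'By+\alpha\cdot y+\delta+O(|y|^{2-n}),$$
with matched $O(|y|^{2-n-k})$ estimates on the first two derivatives, where $B$ is symmetric positive definite (inherited from $D^2v$) and $\mathrm{tr}(B)=g(\infty)$.

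Third I would invert the transform, as in the proofs of Theorems~\ref{result1}--\ref{result2}: a contraction/fixed-point argument built on the uniform ellipticity of $Dw$ propagates the $|y|^{2-n}$ error to $O(|x|^{2-n})$ for $w$, giving $w(x)=\tfrac12(x-\alpha)'B^{-1}(x-\alpha)-\delta+O(|x|^{2-n})$ with matching derivative bounds. Subtracting $\tfrac12|x|^2$ shows that $u$ satisfies (\ref{ConvergenceSpeed1}) with $A:=B^{-1}-I$, $\beta:=-B^{-1}\alpha$ and $\gamma:=\tfrac12\alpha'B^{-1}\alpha-\delta$. Positivity of $B$ gives $A\geq-I$, and the identity $\mathrm{tr}(B)=g(\infty)$ rewrites as $-\sqrt{2}\sum_i(1+\lambda_i(A))^{-1}=f(\infty)$, i.e.\ $A\in\mathcal{A}(f(\infty),-1)$.

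The main obstacle is Step~2: producing the sharp $|y|^{2-n}$ expansion of $v$ together with matched derivative decay from the Poisson equation with decaying right-hand side, in a form clean enough to survive the inverse Legendre transform in Step~3. The principal simplification over Theorems~\ref{result1}--\ref{result2} is that the transformed operator is here the Laplacian, so classical Newton potential estimates and the harmonic Liouville theorem suffice, and the nonlinear iteration required for the arctan-type equations is avoided---this is precisely why, as the authors note, the proof is easier than its nonlinear analogues.
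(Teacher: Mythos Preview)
Your overall plan is correct and would give the result, but the route in Step~3 differs from the paper's, and your reference to ``as in the proofs of Theorems~\ref{result1}--\ref{result2}'' is a misattribution. Those proofs never invert the Legendre transform with error estimates; instead, the paper uses the transform only to establish Hessian convergence (this is Theorem~\ref{LimitofHessian-midtau}, whose proof is essentially your Step~2 restricted to second derivatives), and then \emph{returns to the original equation} $F_{\pi/4}(\lambda(D^2u))=f$ to capture $\beta$ and $\gamma$. Concretely, once $|D^2u-A|\leq C|x|^{-\alpha}$ is known, the paper verifies that $-F_{\pi/4}$ is uniformly elliptic and concave on the range of $D^2u$ and then runs the argument of Section~\ref{ProofofResult1} verbatim: linearize in a direction $e$, use the barrier lemmas~\ref{barrier1}--\ref{barrier2} and the Green-function machinery of Section~\ref{AnalyzeLinearEquation} (Theorem~\ref{exteriorLiouville}, Corollary~\ref{ExteriorLiouville_NonPositive}) to peel off $\beta_e$ and then $\gamma$.

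By contrast, you stay on the Legendre side throughout: you extract the \emph{full} quadratic-plus-$O(|y|^{2-n})$ expansion of $v$ via Newton potential estimates and the harmonic Liouville theorem, and then push this expansion back through the inverse transform. This is legitimate --- with $x=Dv(y)=By+\alpha+D\psi(y)$ and $|D^k\psi|=O(|y|^{2-n-k})$, a single iteration gives $y=B^{-1}(x-\alpha)+O(|x|^{1-n})$, whence $w(x)=x\cdot y-v(y)$ differs from $\tfrac12(x-\alpha)'B^{-1}(x-\alpha)-\delta$ by $O(|x|^{2-n})$ with the matched derivative bounds, since $(D^2v)^{-1}=B^{-1}+O(|y|^{-n})$. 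What your approach buys is that it avoids the linearized-equation analysis of Section~\ref{AnalyzeLinearEquation} entirely (no Green-function equivalence, no barriers); what the paper's approach buys is uniformity with the $\tau\in(0,\tfrac{\pi}{4})$ case, where the transformed equation is Monge--Amp\`ere rather than Poisson and your direct inversion would not yield the sharp rate so easily. For $\tau=\tfrac{\pi}{4}$ your route is the more economical one.
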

\begin{theorem}\label{result4}Let $u \in C^{m+2}\left(\mathbb{R}^{n}\right)$ be a classical solution of (\ref{equation_middletau}) with $\tau=\frac{\pi}{4}$ and satisfy (\ref{boundedHessian_2}) for some constant M. Assume that $f\in C^m(\mathbb{R}^n)$ and satisfies (\ref{High-Regular-Condition}) for some $\zeta>2,\ m\geq 3$. Then there exist $\gamma\in\mathbb{R},\ \beta\in\mathbb{R}^n$ and $A\in \mathcal{A}(f(\infty),-1)$ such that (\ref{ConvergenceSpeed2}) holds.
\end{theorem}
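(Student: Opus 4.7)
The plan is to reduce (\ref{equation_middletau}) to a Poisson equation via a Legendre-type change of variables and then to read off the asymptotic expansion of $u$ from the classical expansion of solutions of $\Delta w=g$ at infinity. First I set $\tilde u(x):=u(x)+\tfrac12|x|^2$, so that (\ref{boundedHessian_2}) becomes $0<D^2\tilde u\le(M+1)I$ and the equation rewrites as
\[
\sum_{i=1}^{n}\frac{1}{\tilde\lambda_i(x)}=-\frac{f(x)}{\sqrt 2},\qquad \tilde\lambda_i:=\lambda_i(D^2\tilde u).
\]
From $\tilde\lambda_i\le M+1$ together with $\sum 1/\tilde\lambda_i=-f(x)/\sqrt 2\to-f(\infty)/\sqrt 2$ one obtains $-f(\infty)/\sqrt 2\ge n/(M+1)$, and also a uniform lower bound $\tilde\lambda_i(x)\ge c_0>0$ for $|x|$ large. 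Hence $\tilde u$ is uniformly convex with bounded Hessian in a neighborhood of infinity, so the Legendre transform $w(y):=y\cdot x(y)-\tilde u(x(y))$, with $x(y):=(D\tilde u)^{-1}(y)$, is a well-defined $C^{m+2}$ function on $\mathbb{R}^n\setminus\overline{B_{R_0}}$ for some $R_0$.

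The fundamental identity $D^2w(y)=(D^2\tilde u(x(y)))^{-1}$ gives $\lambda_i(D^2w)=1/\tilde\lambda_i$, so $w$ solves the semilinear Poisson equation
\[
\Delta w(y)=\sum_{i=1}^{n}\frac{1}{\tilde\lambda_i}=-\frac{f(Dw(y))}{\sqrt 2}=:h(y).
\]
Because $|Dw(y)|=|x(y)|\sim|y|$ thanks to the two-sided Hessian bounds, the chain rule combined with (\ref{High-Regular-Condition}) yields $|D^{k}(h(y)-h_\infty)|=O(|y|^{-\zeta-k})$ for $k=0,1,\dots,m$, where $h_\infty:=-f(\infty)/\sqrt 2$. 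Subtracting the explicit quadratic $w_0(y):=\tfrac{h_\infty}{2n}|y|^2$ gives $\Delta(w-w_0)=h-h_\infty$, whose right-hand side has the same decay.

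Next I would invoke the classical asymptotic expansion for the Poisson equation in exterior domains in the form used in \cite{CL,BLZ}: any $v$ of at most quadratic growth satisfying $\Delta v=g$ on $\mathbb{R}^n\setminus\overline{B_R}$ with $|D^{k}g|=O(|y|^{-\zeta-k})$ admits a unique decomposition $v(y)=\tfrac12 y'\tilde By+\tilde\beta\cdot y+\tilde\gamma+v_\infty(y)$ with $\tilde B$ symmetric and trace-free and $|D^{k}v_\infty(y)|=O(|y|^{2-\min\{\zeta,n\}-k})$ for $k=0,1,\dots,m+1$. Applied to $v=w-w_0$ this produces a quadratic model $\tfrac12 y'By+\tilde\beta\cdot y+\tilde\gamma$ for $w$, where $B:=\tilde B+\tfrac{h_\infty}{n}I$ is positive definite by the upper Hessian bound on $D^2\tilde u$. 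Setting $A:=B^{-1}-I$ and inverting the Legendre transform converts the asymptotic model for $w$ into the model $\tfrac12 x'Ax+\beta\cdot x+\gamma$ for $\tilde u$, hence for $u$, and the decay rates $O(|y|^{2-\min\{\zeta,n\}-k})$ for $w$ propagate through standard change-of-variable estimates to the claimed rates (\ref{ConvergenceSpeed2}) for $u$. The algebraic identity $\mathrm{tr}\,B=h_\infty$ reads $-\sqrt 2\sum 1/(1+\lambda_i(A))=f(\infty)$, so together with $A>-I$ (from positivity of $B$) we conclude $A\in\mathcal A(f(\infty),-1)$.

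The main technical obstacle I expect is the derivative bootstrap needed to reach $k=m+1$ in (\ref{ConvergenceSpeed2}). Differentiating $\Delta w=-f(Dw)/\sqrt 2$ repeatedly produces a chain-rule cascade coupling derivatives of $f$ of every order with derivatives of $w$ of every order, so transferring the $|y|^{-\zeta-k}$ decay all the way up to $k=m$ requires a careful finite induction alternating interior Schauder estimates on dyadic annuli $\{R<|y|<2R\}$ (rescaled so that the problem becomes uniformly elliptic) with the hypothesis (\ref{High-Regular-Condition}). The assumption $m\ge 3$ supplies just enough regularity of $f$ for the cascade to close without loss of differentiability. Once the decay estimates for $w$ are established, the inverse Legendre step and the passage from $\tilde u$ back to $u$ are routine.
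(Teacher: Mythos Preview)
Your approach is correct in outline and takes a genuinely different route from the paper's. Both arguments begin with the same Legendre transform $\tilde u\mapsto w$ that converts (\ref{equation_middletau}) into the Poisson-type equation $\Delta w(y)=-\frac{1}{\sqrt2}f(Dw(y))$. The paper, however, uses this only to extract the Hessian limit (Theorem~\ref{LimitofHessian-midtau}); once $|D^2u-A|\le C|x|^{-\alpha}$ is in hand, the paper returns to the original equation $F_{\pi/4}(\lambda(D^2u))=f(x)$, observes it is uniformly elliptic and convex, and runs the general-purpose bootstrap machinery (Lemmas~\ref{roughestimate_2} and~\ref{bootstrap}) already developed for Theorem~\ref{result2}. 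No Legendre inversion is needed at the end. You instead propose to stay on the $w$-side throughout, carry out the derivative bootstrap for the Laplacian there, and then pull the entire asymptotic expansion back through the Legendre transform. Working with $\Delta$ rather than a variable-coefficient operator is a genuine simplification at the bootstrap stage; the price is the final inversion step.

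Two points deserve more care than you give them. First, the ``classical asymptotic expansion for $\Delta v=g$'' you invoke does not by itself produce the quadratic term $\tilde B$: a bounded-Hessian solution of a Poisson equation with decaying right-hand side need not be asymptotically quadratic without further argument. You must first establish $D^2w\to B$ (e.g.\ by differentiating $\Delta w=h$ twice and applying an exterior Liouville theorem to the bounded functions $w_{ij}$), which is precisely the content of the paper's Theorem~\ref{LimitofHessian-midtau}. Second, the Legendre inversion is not quite ``routine'': the Legendre transform is nonlinear, so the remainder $R$ in $w=Q+R$ does not simply become a remainder of the same shape in $\tilde u=Q^*+\tilde R$. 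The clean way is to transfer the Hessian estimate $D^2w=B+O(|y|^{-\eta})$ via $D^2\tilde u=(D^2w)^{-1}=B^{-1}+O(|x|^{-\eta})$ and then integrate down---but at that point you are essentially reproducing the paper's strategy of working on the $u$-side after the Hessian limit. Your approach is viable, but the saving over the paper's is smaller than it first appears.
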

\begin{theorem}\label{lineargrowth_Result_2}
  Let $u\in C^4(\mathbb{R}^n)$ be a classical solution of (\ref{equation_middletau}) with $D^2u>-I$ and satisfy (\ref{Condition-LinearGrowth}). Suppose that $f$ satisfies the assumptions as in Theorem \ref{result3} (resp. Theorem \ref{result4}), then we have the same results as in Theorem \ref{result3} (resp. Theorem \ref{result4}).
\end{theorem}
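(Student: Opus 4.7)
The plan is to recycle the Legendre-transform strategy of Theorems \ref{result3} and \ref{result4} and to verify that each step remains valid when the upper Hessian bound in (\ref{boundedHessian_2}) is replaced by the linear-growth-of-gradient condition (\ref{Condition-LinearGrowth}). I would first extract a quantitative lower Hessian bound from the equation itself. Since $f\to f(\infty)$ with decay, $f$ is globally bounded; and since each summand $1/(1+\lambda_i(D^2u))$ in (\ref{equation_middletau}) is positive under $D^2u>-I$, the equation forces every such term to be uniformly bounded above. This produces a constant $c_0>0$ with $D^2u\geq(-1+c_0)I$, so that $v:=u+\tfrac12|x|^2$ satisfies $D^2v\geq c_0 I$. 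In particular $|Dv(x)|\geq c_0|x|-|Dv(0)|$, and $Dv$ is a proper local diffeomorphism, hence a global diffeomorphism of $\mathbb{R}^n$ by Hadamard's theorem. The Legendre transform $w:=v^*$ is therefore a smooth convex function on all of $\mathbb{R}^n$, with $D^2w(y)=(D^2v(x))^{-1}\leq c_0^{-1}I$ at $y=Dv(x)$, and (\ref{equation_middletau}) transforms into the perturbed Poisson equation
\begin{equation*}
\Delta w(y)\,=\,-\frac{1}{\sqrt{2}}\,f(Dw(y)),\qquad y\in\mathbb{R}^n.
\end{equation*}

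Next I would transfer the asymptotic hypotheses on $f$ onto the new right-hand side $h(y):=-f(Dw(y))/\sqrt{2}$. From (\ref{Condition-LinearGrowth}) one gets $|Dv(x)|\leq C(1+|x|)$, which combined with the lower bound above yields $|Dw(y)|\asymp|y|$ at infinity. The chain rule, together with the uniform upper bound on $D^2w$ and a regularity bootstrap for the higher derivatives of $w$ from the equation, then shows that $h-h(\infty)$, with $h(\infty)=-f(\infty)/\sqrt{2}$, inherits from $f-f(\infty)$ the decay of (\ref{Low-Regular-Condition}) or (\ref{High-Regular-Condition}). At this point, the asymptotic-at-infinity result for the Poisson equation $\Delta w=h$, which is precisely the content of the proofs of Theorems \ref{result3} and \ref{result4} after the Legendre transform has been applied, produces an expansion $w(y)=\tfrac12 y^\top\tilde A y+\tilde\beta\cdot y+\tilde\gamma+R(y)$ for some symmetric positive-definite $\tilde A$ and a remainder $R$ obeying the rate of (\ref{ConvergenceSpeed1}) or (\ref{ConvergenceSpeed2}).

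Finally, I would invert the Legendre duality: the dual quadratic is $\tfrac12 x^\top\tilde A^{-1}x$, so $u=v-\tfrac12|x|^2$ inherits an expansion with leading matrix $A:=\tilde A^{-1}-I$; the identity $1/(1+\lambda_i(A))=\lambda_i(\tilde A)$ forces $F_{\pi/4}(\lambda(A))=-\sqrt{2}\,\mathrm{tr}(\tilde A)=f(\infty)$, while $\tilde A>0$ gives $A>-I$, so $A\in\mathcal{A}(f(\infty),-1)$. The remainder estimates are pushed back through the duality using $|Dw|\asymp|y|$ and the uniform upper bound on $|D^2 w|$. I expect the main difficulty to lie in the bootstrap referenced in the second paragraph: without any \emph{upper} Hessian bound on $u$ there is no uniform lower bound on $D^2w$, so the higher derivatives of $w$ needed to transfer the $C^m$-decay hypothesis of Theorem \ref{result4} cannot be extracted from standard Schauder theory for $\Delta w=h$ alone, but must be bootstrapped self-consistently from $\Delta w=h$ together with the decay of $h$, via interior Schauder estimates on an exhausting family of annuli in the far field; apart from this point, the argument closely parallels the proof of Theorems \ref{result3} and \ref{result4}.
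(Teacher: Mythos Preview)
Your approach is correct and starts out exactly as the paper does: extract the quantitative lower bound $D^2u\ge(-1+c_0)I$ from the equation and the boundedness of $f$, combine it with (\ref{Condition-LinearGrowth}) to obtain the two-sided comparability $|x|\asymp|\widetilde x|$ under the Legendre map $\widetilde x=Dv(x)$ (this is precisely the content of Lemma \ref{lineargrowth}, which carries over verbatim to $\tau=\frac{\pi}{4}$), and pass to the Poisson equation satisfied by the transform $V$. Where you diverge from the paper is in what you do next. You propose to carry out the \emph{full} asymptotic expansion for $w$ on the Legendre side and then push everything back through the duality. The paper instead uses the Legendre transform only to make the proof of Theorem \ref{LimitofHessian-midtau} go through: the sole place where (\ref{boundedHessian_2}) entered that proof was in establishing the equivalence (\ref{linear-of-X}), and you have just recovered it from (\ref{Condition-LinearGrowth}). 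Theorem \ref{LimitofHessian-midtau} then yields $D^2u\to A$, whence $D^2u$ is globally bounded, and the paper simply invokes the already-proved bounded-Hessian Theorems \ref{result3} and \ref{result4} directly on $u$.

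This short-circuit is exactly what dissolves the ``main difficulty'' you flag. By reducing to the bounded-Hessian case, no self-consistent Schauder bootstrap for $\Delta w=-\frac{1}{\sqrt2}f(Dw)$ is needed, and there is no work to transfer the remainder rates (\ref{ConvergenceSpeed1}) or (\ref{ConvergenceSpeed2}) back through the Legendre duality. Your route is viable --- the bootstrap does close, since $D^2w$ is bounded from the equation alone and each Schauder step feeds the next --- but it costs that extra labor; the paper's reduction-to-bounded-Hessian trick buys a cleaner argument at the price of a slightly less self-contained one.
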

\begin{theorem}\label{quadraticgrowth_Result_2}
  Let $u\in C^4(\mathbb{R}^n)$ be a classical solution of (\ref{equation_middletau}) with $D^2u>-I$ and satisfy (\ref{Condition-QuadraticGrowth}). Suppose that $f$ satisfies the assumptions as in Theorem \ref{result3} (resp. Theorem \ref{result4}), then we have the same results as in Theorem \ref{result3} (resp. Theorem \ref{result4}).
\end{theorem}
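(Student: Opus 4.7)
The plan is to reduce the quadratic growth hypothesis (\ref{Condition-QuadraticGrowth}) to the linear gradient growth hypothesis (\ref{Condition-LinearGrowth}), at which point Theorem \ref{lineargrowth_Result_2} applies directly and yields the conclusion (the same ellipticity hypothesis $D^2u>-I$ is in force for both). The key observation is that the ellipticity assumption makes $u$ \emph{strictly convex up to a controlled quadratic correction}, and for a globally convex function with at most quadratic growth from above, a linear gradient bound is automatic.

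Concretely, first I would set
\begin{equation*}
v(x):=u(x)+\tfrac{1}{2}|x|^2,\qquad x\in\mathbb{R}^n.
\end{equation*}
Since $D^2v=D^2u+I>0$ on all of $\mathbb{R}^n$, the function $v$ is convex (in fact strictly convex), and the hypothesis (\ref{Condition-QuadraticGrowth}) together with $\tfrac12|x|^2\le \tfrac12(1+|x|^2)$ yields a one-sided quadratic bound $v(x)\le C'(1+|x|^2)$ for some $C'>0$. Next I would exploit convexity to convert this one-sided quadratic bound into a gradient bound. For any $x\in\mathbb{R}^n$ with $Dv(x)\ne 0$, choose $e=Dv(x)/|Dv(x)|$ and $r=|x|+1$; then convexity of $v$ gives
\begin{equation*}
v(x+re)\ge v(x)+r\,|Dv(x)|,
\end{equation*}
so that
\begin{equation*}
(|x|+1)\,|Dv(x)|\le v(x+re)-v(x)\le C'\bigl(1+(|x|+r)^2\bigr)+C'(1+|x|^2)\le C''(1+|x|^2),
\end{equation*}
which gives $|Dv(x)|\le C''(1+|x|)$. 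Since $Du(x)=Dv(x)-x$, the triangle inequality yields the linear growth $|Du(x)|\le C(1+|x|)$, i.e. condition (\ref{Condition-LinearGrowth}).

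Having verified (\ref{Condition-LinearGrowth}), the hypotheses of Theorem \ref{lineargrowth_Result_2} are satisfied under the same assumptions on $f$ (i.e. those of Theorem \ref{result3} or Theorem \ref{result4}, respectively). Invoking that theorem gives the existence of $\gamma\in\mathbb{R}$, $\beta\in\mathbb{R}^n$ and $A\in\mathcal{A}(f(\infty),-1)$ such that the asymptotic expansion (\ref{ConvergenceSpeed1}) (resp.\ (\ref{ConvergenceSpeed2})) holds, finishing the proof. The only substantive point is the convexity-to-gradient-bound step above; this is a standard consequence of global convexity plus a one-sided quadratic bound, and requires no use of the equation beyond the ellipticity assumption $D^2u>-I$, which is precisely what makes the shifted function $v$ globally convex. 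No further obstacle arises.
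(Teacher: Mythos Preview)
Your proposal is correct and takes a genuinely more elementary route than the paper. In Section~\ref{section-middle} the paper follows the template of Section~\ref{InteriorEstimates}: it first uses the equation and the lower bound of $f$ to sharpen the ellipticity to $D^2u>(-1+\delta)I$, shifts to a convex function $v=u+\tfrac{1-\delta}{2}|x|^2$, rewrites the equation for $v$, and then invokes Y.Y.~Li's interior gradient estimate (Theorem~\ref{estimate_YYLi}) on rescaled balls, exactly as in Theorem~\ref{gradientestimate}, to pass from quadratic growth of $u$ to linear growth of $Du$; only then does it appeal to Theorem~\ref{lineargrowth_Result_2}. Your argument bypasses the PDE-based gradient estimate entirely: the bare hypothesis $D^2u>-I$ already makes $v=u+\tfrac12|x|^2$ convex, and the support-plane inequality $v(x+re)\ge v(x)+r\,Dv(x)\cdot e$ together with the one-sided quadratic upper bound gives $|Dv(x)|\le C(1+|x|)$ directly. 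This is shorter and uses nothing about the structure of $F_{\pi/4}$; the paper's route has the advantage of reusing machinery already developed for the $0<\tau<\tfrac{\pi}{4}$ case and of applying to operators where a simple quadratic shift does not yield a globally convex function. One small point you should make explicit: your bound $-v(x)\le C'(1+|x|^2)$ relies on the affine lower bound $v(x)\ge v(0)+Dv(0)\cdot x$ furnished by convexity (the hypothesis (\ref{Condition-QuadraticGrowth}) is one-sided), so that $-v(x)\le |v(0)|+|Dv(0)|\,|x|\le C(1+|x|^2)$; once this is stated the estimate is complete.
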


\begin{remark}
  As in Remarks \ref{example}  and   \ref{exteriorDomain}, similar discussion work for $\tau=\frac{\pi}{4}$ as well.
Whether similar result still holds for $\frac{\pi}{4}<\tau\leq \frac{\pi}{2}$ is still unclear.
\end{remark}

This paper is organized in the following order. In section
\ref{Section-ConstantSituation}, we prove the asymptotic behavior of solutions of equations (\ref{equation_ConstantRHS}) with constant right hand side.
Then, we move on to study the equation with perturbed right hand side. In section \ref{ConvergenceOfHessian} we prove that under suitable conditions, the Hessian $D^2u$ converge to suitable constant matrix $A\in\mathtt{Sym(n)}$ with $|x|^{-\alpha}$ convergence rate for some $\alpha>0$ (Theorem \ref{LimitofHessian}). In section \ref{AnalyzeLinearEquation} we prepare some necessary analysis of linearized elliptic equations that will be used later on. In sections \ref{ProofofResult1} and \ref{ProofOfResult2} we capture the linear and constant part of the solution at infinity, due to the difference of tools we apply, the conditions on RHS term $f(x)$ are slightly different from \cite{BLZ}. Finally in section \ref{InteriorEstimates} we prove that under suitable conditions of $f$, the bounded Hessian condition follows from interior Hessian estimate and linear growth condition of gradient, which follows from the line of
\cite{CompactnessMethod}. Also, as a corollary of Y.Y.Li's paper \cite{Estimate-YYLi}, we notice that the linear growth condition of gradient follows from interior gradient estimate and quadratic growth condition of solution itself. Eventually in section \ref{section-middle}, we provide the results for $\tau=\frac{\pi}{4}$.

%The appendix part includes the following contents.
%In section \ref{Appendix6}, we provide the proof of the equivalence of Green's function (\ref{Equivalence-Green}), which is based on some important theorems on the equivalence of Green's function on unbounded domain \cite{Equivalence} and the exterior Liouville theorem by Li-Li-Yuan \cite{ExteriorLiouville}. In section \ref{Appendix7}, we explain the reason for the additional condition $D^2u>-(a+b)I$  in Theorem \ref{Warren} which is not occurred in the original paper  by M.Warren \cite{Warren}.
%In Section \ref{Appendix2}, we restated the proof of Lemma 2.1 in \cite{BLZ} such that it can be applied to our settings. (Note that the $C^{1,\alpha}$ behavior of right hand side term is lost after taking Legendre transform.)

\section{Constant Right Hand Side Situation}\label{Section-ConstantSituation}

In this section, we give an rigorous proof of Theorems \ref{ConstantRHS1}, \ref{ConstantRHS3} and \ref{ConstantRHS2} based on  Theorem 2.1 in \cite{ExteriorLiouville}.  For reading simplicity, we repeat the statement of this theorem.

\begin{theorem}[Exterior Liouville Theorem ]\label{ExteriorLiouville_C}
 Let $u$ be a smooth solution of
  \begin{equation*}
F\left(D^{2} u\right)=0,
\end{equation*}
in the exterior domain $\mathbb{R}^{n} \backslash \overline{B_1},$ $F$ is smooth, uniformly elliptic and  the level
set $\{M | F(M)=0\}$ is concave. Suppose for some constant $K$,
\begin{equation*}
\left\|D^{2} u\right\|_{L^{\infty}\left(\mathbb{R}^{n} \backslash \overline{B}_{1}\right)} \leq K.
\end{equation*}
Then there exist $\gamma\in\mathbb{R},\ \beta\in\mathbb{R}^n$ and $A\in\{A\in\mathtt{Sym}(n):F(A)=0,\ ||A||\leq K\}$ such that (\ref{ConvergenceSpeed1}) holds.
\end{theorem}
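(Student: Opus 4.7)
The plan is to combine a blow-down analysis with a linearization-plus-Kelvin-transform argument, in the spirit of Caffarelli and Li. First I would rescale: for $R \to \infty$, set $u_R(y) := R^{-2} u(Ry)$, which solves the same equation $F(D^2 u_R) = 0$ on exterior domains shrinking to $\mathbb{R}^n \setminus \{0\}$, with the same Hessian bound $K$. Uniform ellipticity of $F$ together with concavity of the level set $\{F = 0\}$ lets us invoke the Evans--Krylov theorem to obtain interior $C^{2,\alpha}$ estimates uniform in $R$. Passing to a subsequential limit $u_\infty$ with $\|D^2 u_\infty\|_\infty \le K$, the bounded Hessian allows us to remove the isolated singularity at the origin, and the classical Evans--Krylov--Liouville theorem on $\mathbb{R}^n$ forces $u_\infty$ to be a quadratic polynomial; this identifies the matrix $A \in \mathtt{Sym}(n)$ with $F(A) = 0$ and $\|A\| \le K$ that should appear in the expansion.

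Second, I would set $w(x) := u(x) - \tfrac{1}{2} x^\prime A x - \beta \cdot x - \gamma$, where $\beta$ and $\gamma$ are to be chosen. Subtracting $F(A) = 0$ from $F(D^2 u) = 0$ and Taylor-expanding $F$ about $A$ yields a linear equation
\begin{equation*}
a^{ij}(x)\, D_{ij} w(x) = 0, \qquad a^{ij}(x) := \int_0^1 F_{ij}\bigl(A + t\, D^2 w(x)\bigr)\, dt,
\end{equation*}
where $a^{ij}(x) \to F_{ij}(A) =: L^{ij}$ as $|x| \to \infty$, because the blow-down step forces $D^2 w(x) \to 0$. After a linear change of variables normalizing $L^{ij}$ to $\delta^{ij}$, the operator becomes a $C^\alpha$ perturbation of the Laplacian, and the Evans--Krylov rate for $D^2 w \to 0$ gives a quantitative rate of convergence of the coefficients to $\delta^{ij}$.

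Third, to extract the sharp $|x|^{-(n-2)}$ decay, I would apply a Kelvin-type transform $\tilde w(y) := |y|^{2-n} w(y/|y|^2)$, converting the exterior problem near infinity into an interior problem near the origin for a perturbation of the Laplacian. The freedom in choosing $\beta$ and $\gamma$ translates into fixing the value and gradient of $\tilde w$ at the origin; standard Schauder and $W^{2,p}$ theory applied to the transformed equation then determine $\tilde w$ near $0$ and, pulling back, give precisely the bounds \eqref{ConvergenceSpeed1} on $D^k w$ for $k = 0, 1, 2$.

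The main obstacle is the third step: a naive Green's-function or Harnack argument only yields a qualitative decay, whereas achieving the sharp Newtonian rate $|x|^{-(n-2)}$ requires that the Kelvin transform produce a genuinely interior regularity problem, which in turn relies on the coefficients $a^{ij}$ converging to constants with a controlled rate. This is the point at which the concavity hypothesis on $\{F=0\}$, the bounded-Hessian assumption, and the Evans--Krylov $C^{2,\alpha}$ bound all feed back in simultaneously. The remaining delicate points are to verify Hölder regularity of the perturbation after Kelvin inversion and to check that $\beta, \gamma$ can be uniquely chosen to annihilate the $|y|^{-1}$ and constant terms in the expansion of $\tilde w$ at the origin.
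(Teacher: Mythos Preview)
The paper does not prove Theorem~\ref{ExteriorLiouville_C}; it is quoted from Li--Li--Yuan \cite{ExteriorLiouville} (their Theorem~2.1) and used as a black box. So there is no in-paper proof to compare against directly.

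That said, your first two steps (blow-down compactness to locate $A$, then linearize around $A$) match what one infers about the Li--Li--Yuan argument from the way this paper later adapts it in Sections~\ref{AnalyzeLinearEquation}--\ref{ProofofResult1}. Your third step is where you diverge: Li--Li--Yuan, as reflected in this paper, do \emph{not} use a Kelvin inversion. Instead they establish an exterior Liouville theorem for linear equations whose coefficients converge to constants (Theorem~\ref{exteriorLiouville-ORI} here), via explicit barriers such as $|x|^{2-n}-|x|^{2-n-\varepsilon}$ (Lemmas~\ref{barrier1}--\ref{barrier2}) together with the equivalence of Green's functions for short-range perturbations of a constant-coefficient operator (formula~\eqref{Equivalence-Green}). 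This sidesteps the regularity issues of Kelvin-transforming a variable-coefficient equation and delivers the sharp $|x|^{2-n}$ rate directly. Your Kelvin route is plausible but is the more laborious path.

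There is one genuine gap in your outline. You pass to a \emph{subsequential} blow-down limit, declare that ``this identifies the matrix $A$,'' and later assert that ``the blow-down step forces $D^2w(x)\to 0$.'' Subsequential compactness alone does not exclude different quadratic limits along different sequences $R_k\to\infty$; without uniqueness of $A$, the claim $D^2w\to 0$ is circular. You need an independent argument that $D^2u(x)$ has a \emph{full} limit as $|x|\to\infty$ --- typically obtained by iterating the Evans--Krylov oscillation decay over dyadic annuli, or by a reduction as in Section~\ref{ConvergenceOfHessian}. This step is not deep once isolated, but it is not a consequence of what you have written.
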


By the important characterise results on ellipticity and concavity by Caffarelli-Nirenberg-Spruck \cite{CNS}, $F(D^2u)=f(\lambda(D^2u))$ is uniformly elliptic and concave if
\begin{equation*}
\lambda \leq \frac{\partial f}{\partial \lambda_{i}} \leq \Lambda, \quad \forall i=1,2, \cdots, n
\end{equation*}
for some constants $0<\lambda\leq \Lambda<\infty$ and $f$ is a concave function.

Now we move on to study our targets. The section is organized in the following order. We will discuss the situation of $0<\tau<\frac{\pi}{4}$ situation in the first part, $\tau=\frac{\pi}{4}$ situation in the second part and $\frac{\pi}{4}<\tau<\frac{\pi}{2}$ in the third part.

\subsection{$0<\tau<\frac{\pi}{4}$ Situation}

In this part, we focus on equation (\ref{equation_ConstantRHS}) with $0<\tau<\frac{\pi}{4}$. We introduce  the following  Legendre transform and apply Theorem \ref{ExteriorLiouville_C} twice to prove the asymptotic behavior.

Let $u\in C^2(\mathbb{R}^n\setminus\overline{B_1})$ be a classical solution of (\ref{equation_ConstantRHS}) and satisfy (\ref{temp-7}) .
We can extend the function $u$ sufficiently smooth to $\mathbb{R}^n$
 such that $D^2u>-(a+b)I$ in $\mathbb{R}^n$.
Let $$
\overline{u}(x):=u(x)+\dfrac{a+b}{2}|x|^2,
$$
then by (\ref{temp-7}) we have
\begin{equation*}
D^{2} \bar{u}=D^{2} u+(a+b) I>2bI,\text{ in }\mathbb{R}^n.
\end{equation*}
Let $(\widetilde{x},\bolangxian{v})$ be the Legendre transform of $(x,\bar{u})$ i.e.
$$
\left\{
\begin{array}{ccc}
  \bolangxian{x}:=D\overline{u}(x)=Du(x)+(a+b)x\\
  D_{\bolangxian{x}}v(\bolangxian{x}):=x\\
\end{array}
\right.,
$$
and  we have
\begin{equation*}
D_{\widetilde{x}}^{2} v(\widetilde{x})=\left(D^{2} \bar{u}(x)\right)^{-1}=(D^2u(x)+(a+b)I)^{-1}<\frac{1}{2b}I.
\end{equation*}
This transform is exactly the classical Legendre transform, hence it is proved rigourously that such a scaler function $v(\bolangxian{x})$ exists.
Then we take \begin{equation}\label{LegendreTransform}
\bolangxian{u}(\bolangxian{x}):=\dfrac{1}{2}|\bolangxian{x}|^2-2b\cdot v(\bolangxian{x}),
\end{equation}
which satisfies
\begin{equation}\label{property-Legendre}
\widetilde{\lambda}_{i}\left(D^{2} \widetilde{u}\right)=1-2 b \cdot \frac{1}{\lambda_{i}+a+b}=\frac{\lambda_{i}+a-b}{\lambda_{i}+a+b}\in (0,1).
\end{equation}

Then we see that $\bolangxian{u}(\bolangxian{x})$ satisfies the following Monge-Amp\`ere type equation
\begin{equation}\label{temp-16}
G(\bolangxian{\lambda}(D^2\bolangxian{u})):=\sum _ { i = 1 } ^ { n } \ln \bolangxian{\lambda_i}=\frac{2b}{\sqrt{a^2+1}}C_0,\ \text{in }\mathbb{R}^n\setminus\overline{\Omega},
\end{equation}
for some bounded set $\Omega=D\overline{u}(B_1)=Du(B_1)+(a+b)B_1$.

Moreover, we see that for any $x\in\mathbb{R}^n,\ \widetilde{x}=D\bar{u}(x)$, $$
|\bolangxian{x}-\widetilde{0}|=|Du(x)-Du(0)+(a+b)x|
>2b|x|.
$$
Hence triangle inequality gives us \begin{equation}\label{limitofX}
|\widetilde{x}|\geq
-|\widetilde{0}|+|\widetilde{x}-\widetilde{0}|
> -|\widetilde{0}|+2b|x|.
\end{equation}
Especially  we have $$
\lim_{|x|\rightarrow\infty}|\widetilde{x}|=\infty.
$$
Now we are ready to prove Theorem \ref{ConstantRHS1}.
\begin{proof}[Proof of Theorem \ref{ConstantRHS1}]
  First we prove that there exists a positive lower bound such that $\bolangxian{\lambda_i}$ is uniformly strictly positive.

  It follows from equation (\ref{equation_ConstantRHS}) and  the fact that $\ln \dfrac{\lambda_i+a-b}{\lambda_i+a+b}\leq0,\quad\forall\ i=1,2,\cdots,n$, we have $$\ln\bolangxian{\lambda_i}=
  \ln \dfrac{\lambda_i+a-b}{\lambda_i+a+b}\geq \frac{2b}{\sqrt{a^2+1}}C_0,\quad\forall i=1,2,\cdots,n.
  $$
  Thus it follows immediately that $\bolangxian{\lambda_i}\geq e^{\frac{2b}{\sqrt{a^2+1}}C_0}>0$ and by $C_0<0$, \begin{equation}\label{temp_lowerbound_1}
  \lambda_i\geq \dfrac{2b}{1-e^{\frac{2b}{\sqrt{a^2+1}}C_0}}-a-b=-a+b+\dfrac{e^{\frac{2b}{\sqrt{a^2+1}}
  C_0}}{1-e^{\frac{2b}{\sqrt{a^2+1}}C_0}}2b=:-a+b+\delta.
  \end{equation}

Note that for $\widetilde{\lambda_i}\in [e^{\frac{2b}{\sqrt{a^2+1}}C_0},1]$, taking derivatives and we see $\frac{\partial G}{\partial \widetilde{\lambda}_{i}}(\widetilde{\lambda})$ has positive lower and upper bound, $\frac{\partial^{2} G}{\partial \tilde{\lambda}_{i}^{2}}(\tilde{\lambda})$ has negative upper bound.
Hence the  equation (\ref{temp-16}) is uniformly elliptic and concave.

  Applying Theorem \ref{ExteriorLiouville_C} we learn  that
  there exists some $$\widetilde{A}\in\{\widetilde{A}\in\mathtt{Sym}(n):G(\lambda(\widetilde{A}))=
  \frac{2b}{\sqrt{a^2+1}}C_0,\ \lambda_i(\widetilde{A})\in [e^{\frac{2b}{\sqrt{a^2+1}}C_0},1]\},$$ such that
  \begin{equation}\label{LimitofHessian-AfterLegendre}
  \lim_{|\bolangxian{x}|\rightarrow+\infty}D^2\bolangxian{u}(\bolangxian{x})= \widetilde{A}.
  \end{equation}
  From the relationship of Legendre transform, we have $$
  D^2\bolangxian{u}(\bolangxian{x})=I-2b(D^2u(x)+(a+b)I)^{-1},\quad\text{and}\quad
  \bolangxian{x}=Du(x)+(a+b)x.
  $$

  Now we show that all the eigenvalues  $\lambda_i( \widetilde{ A})$  are strictly less than 1, which tells us $I-\widetilde{A}$ is an invertible matrix.  Argue by contradiction, by rotating the $\widetilde{x}$ -space to make $\widetilde{A}$ diagonal, we may assume that $\widetilde{A}_{11}=$
  $1.$ Then by the asymptotic behavior of $D\widetilde{u}$ and hence $Dv$ (take $k=1$ in formula (\ref{ConvergenceSpeed1}) of Theorem \ref{ExteriorLiouville_C}), we  have the following asymptote for some $\widetilde{\beta}_1\in\mathbb{R}$
  \begin{equation*} D_{\widetilde{x}_{1}}v=\widetilde{\beta}_1+O(|\widetilde{x}|^{1-n}),
\end{equation*}
 as $|\widetilde{x}|\rightarrow\infty$. Thus we infer from the definition of Legendre transform (\ref{LegendreTransform}) and (\ref{limitofX}) that
\begin{equation}\label{strip-argument}
x_1=D_{\widetilde{x}_1}v(\widetilde{x})=\widetilde{\beta}_{1}+O\left(|\widetilde{x}|^{1-n}\right),
\end{equation}
as $|\widetilde{x}|\rightarrow\infty$.
which means that $\mathbb{R}^{n} \backslash \overline{B_1}$ is bounded in the $x_{1}$ -direction, hence this becomes a contradiction.  Thus $\lambda_i(\widetilde{A})<1$ strictly for every $i=1,2,\cdots,n$. Hereinafter we will state similar argument as ``strip argument'' for short.

 Together with the fact (\ref{limitofX}) and (\ref{LimitofHessian-AfterLegendre}), we see that
 $$
  \lim_{|x|\rightarrow+\infty}D^2u(x)
  =\dfrac{1}{2b}(I-\bolangxian{A})^{-1}-(a+b)I,
  $$
  which is a bounded matrix. This limit together with the regularity assumption of extended function $u\in C^4(\mathbb{R}^n)$ tells us $$
  D^2u(x)\leq M,\quad\forall x\in\mathbb{R}^n.
  $$

 Applying formula (\ref{temp_lowerbound_1}), we see that   for $\lambda_i\in [-a+b+\delta,M]$, $$
  \dfrac{\partial F_{\tau}}{\partial \lambda_i}(\lambda)=\frac{\lambda_{i}+a+b}{\lambda_{i}+a-b} \cdot \frac{2 b}{\left(\lambda_{i}+a+b\right)^{2}}=\frac{2 b}{\left(\lambda_{i}+a\right)^{2}-b^{2}}\in [\frac{2b}{(M+a)^2-b^2},\frac{2b}{(b+\delta)^2-b^2}],
  $$
  and $$
  \dfrac{\partial^2 F_{\tau}}{\partial \lambda_i^2}(\lambda)=-\frac{4 b\left(\lambda_{i}+a\right)}{\left[\left(\lambda_{i}+a\right)^{2}-b^{2}\right]^{2}}<0.
  $$

  Thus  $F_{\tau}$ is uniformly elliptic and concave, then Theorem \ref{ExteriorLiouville_C} gives us the result.
\end{proof}

\subsection{$\tau=\frac{\pi}{4}$ Situation}

Let $u\in C^4(\mathbb{R}^n\setminus\overline{B_1})$ be a classical solution of (\ref{equation_ConstantRHS}) with $\tau=\frac{\pi}{4}$ and satisfy (\ref{temp-11}).
Let $$
\overline{U}(x):=u(x)+\dfrac{1}{2}|x|^2.
$$
We can extend the function $u$ sufficiently smooth to $\mathbb{R}^n$ such that $D^2u>-I$ in $\mathbb{R}^n$ and hence $D^2\overline{U}>0$ in $\mathbb{R}^n$.

Let $(\widetilde{x},V)$ be the Legendre transform of $(x,\overline{U})$ i.e.
\begin{equation}\label{LegendreTransform2}
\left\{
\begin{array}{ccc}
  \bolangxian{x}:=D\overline{U}(x)=Du(x)+x\\
  D_{\bolangxian{x}}V(\bolangxian{x}):=x\\
\end{array}
\right.,
\end{equation}
and we have
\begin{equation*}
D_{\bolangxian{x}}^2V(\bolangxian{x})=(D^2\overline{U}(x))^{-1}.
\end{equation*}
Then we see that $V(\bolangxian{x})$ satisfies: \begin{equation}\label{equation_V}
-\Delta_{\bolangxian{x}} V=\dfrac{\sqrt{2}C_0}{2},\quad\text{in}\quad\mathbb{R}^n\setminus\overline{\Omega},
\end{equation}
where ${\Omega}=D\overline{U}(B_1)=Du(B_1)+B_1$.

\begin{proof}[Proof of Theorem \ref{ConstantRHS3}.]
  By the Legendre transform (\ref{LegendreTransform2}), the original equation is changed into formula (\ref{equation_V}). Since $D^2u>-I$, hence from the equation (\ref{equation_ConstantRHS}) we obtain that for all $i=1,2,\cdots,n$ $$
  \dfrac{\sqrt{2}}{\lambda_i(D^2u)+1}
  =\sum_{j\not=i}\dfrac{-\sqrt{2}}{\lambda_j(D^2u)+1}-C_0< -C_0.
  $$
  Hence \begin{equation}\label{formula}
  D^2_{\bolangxian{x}}V(\bolangxian{x})
  =(D^2u(x)+I)^{-1}\leq \dfrac{-C_0}{\sqrt{2}}I.
  \end{equation}
  Thus $D^2_{\bolangxian{x}}V(\bolangxian{x})$ is positive and bounded, applying Theorem \ref{ExteriorLiouville_C} and we learned that the limit of $D^2_{\bolangxian{x}}V(\bolangxian{x})$ exists as $|\bolangxian{x}|\rightarrow\infty$.

  As the argument in the proof of Theorem \ref{ConstantRHS1}, strip argument (\ref{strip-argument}) tells us the limit of $D^2u(x)$ as $|x|\rightarrow\infty$ also exists, hence $D^2u$ is bounded from above. Hence $F_{\tau}$ is uniformly elliptic and concave with respect to the set of solutions and Theorem \ref{ExteriorLiouville_C} gives us the result.
\end{proof}

\subsection{$\frac{\pi}{4}<\tau<\frac{\pi}{2}$ Situation}

In this part, we focus on equation (\ref{equation_ConstantRHS}) with $\frac{\pi}{4}<\tau<\frac{\pi}{2}$.
We  prove Theorem \ref{ConstantRHS2} by using the following important identity formula, which is proved by taking derivatives. (See for example \cite{Warren,huang2019entire}).
  When $\lambda_i>-a-b,\ \forall i=1,2,\cdots,n$, then
  \begin{equation}\label{identity-inuse}
  \sum_{i=1}^n\arctan \frac{\lambda_{i}+a-b}{\lambda_{i}+a+b}=\sum_{i=1}^n\arctan \left(\frac{\lambda_{i}+a}{b}\right)-\frac{n\pi}{4}.
  \end{equation}

First, we introduce the following result on asymptotic behavior of classical solutions of special Lagrangian equation (\ref{equation}) with $\tau=\frac{\pi}{2}$. See Theorem 1.1 of \cite{ExteriorLiouville}.
\begin{theorem}\label{Yu.Yuan_Result}

  Let $u$ be a smooth solution of (\ref{equation_ConstantRHS}) with $\tau=\frac{\pi}{2}$, and we assume that
 $|C_0|>(n-2) \pi / 2$. Then there exist $\gamma\in\mathbb{R},\ \beta\in\mathbb{R}^n$ and $A\in \mathcal{A}_{\tau}(C_0,-\infty)$ such that (\ref{ConvergenceSpeed1}) holds.
\end{theorem}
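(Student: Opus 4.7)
The plan is to deduce Theorem \ref{Yu.Yuan_Result} from the general exterior Liouville framework provided by Theorem \ref{ExteriorLiouville_C}, by verifying its three structural hypotheses for $F(M) := F_{\pi/2}(\lambda(M)) - C_0$: (i) uniform ellipticity, (ii) concavity of the zero level set, and (iii) a global Hessian bound $\|D^2 u\|_{L^\infty(\mathbb{R}^n \setminus \overline{B_1})} \leq K$. Conditions (i) and (ii) are essentially classical: ellipticity follows from $\partial F_{\pi/2}/\partial \lambda_i = (1+\lambda_i^2)^{-1} > 0$ and becomes uniform once a Hessian bound is in hand, while the concavity of the level set $\{\lambda \in \mathbb{R}^n : \sum_{i=1}^n \arctan \lambda_i = C_0\}$ under the supercritical hypothesis $|C_0| > (n-2)\pi/2$ is the classical convexity result of Yuan (see \cite{Yu.Yuan1, Yu.Yuan2, WYSpecialLagrangian}); this is the only place the phase restriction is invoked, and it fails precisely at the critical value $(n-2)\pi/2$.

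The main obstacle is establishing the a priori Hessian bound (iii) in the exterior domain, since the statement imposes no growth assumption on $u$. I would proceed in two sub-steps. First, invoke an interior Hessian estimate for supercritical-phase special Lagrangian equations (of Warren--Yuan type \cite{WYSpecialLagrangian}) to control $|D^2u(x)|$ on a ball $B_r(x)$ with $|x|$ large in terms of the oscillation of $Du$ on $B_{2r}(x)$. Second, bound the gradient growth at infinity: exploiting that $|C_0| > (n-2)\pi/2$ forces one-sided control on each individual eigenvalue via the arctan sum constraint (any single eigenvalue drifting to $\pm\infty$ can be absorbed only at the cost of the others), one derives semi-concavity or semi-convexity of $u$ outside a large ball and hence linear growth of $|Du|$; feeding this into the interior estimate yields a uniform bound on $|D^2u|$ on all annular regions at infinity. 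An alternative, possibly cleaner path is a blow-down/contradiction argument: if $\sup_{|x|=R}|D^2u|$ were unbounded as $R \to \infty$, rescaling $u_R(y) := u(Ry)/R^2$ and passing to a subsequential limit produces an entire non-quadratic solution of the same supercritical-phase special Lagrangian equation, contradicting the Bernstein theorem of Yuan \cite{Yu.Yuan1, Yu.Yuan2}.

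With (i), (ii), and (iii) verified, Theorem \ref{ExteriorLiouville_C} applies directly and furnishes $\gamma \in \mathbb{R}$, $\beta \in \mathbb{R}^n$, and $A \in \mathtt{Sym}(n)$ with $\|A\| \leq K$ and $F_{\pi/2}(\lambda(A)) = C_0$, hence $A \in \mathcal{A}_\tau(C_0, -\infty)$, for which the convergence (\ref{ConvergenceSpeed1}) holds. The hard step throughout is unquestionably (iii); both the interior Hessian estimate under supercritical phase and the careful extraction of linear gradient growth from the arctan sum geometry are nontrivial and rely essentially on the strict inequality $|C_0| > (n-2)\pi/2$.
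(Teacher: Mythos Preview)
The paper does not prove this theorem at all: immediately before stating it, the authors write ``See Theorem 1.1 of \cite{ExteriorLiouville}'' and import the result wholesale from Li--Li--Yuan. So there is no proof in the paper to compare your proposal against; the statement functions here as a black-box input to the proofs of Propositions \ref{Result-Part1} and \ref{Result-Part2}.

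That said, your outline is a reasonable sketch of the strategy in \cite{ExteriorLiouville}, with one notable omission. The decisive mechanism there for obtaining the Hessian bound (your step (iii)) is not an interior Hessian estimate plus gradient growth, nor a blow-down contradiction, but rather a \emph{Lewy--Yuan rotation}: under the supercritical phase hypothesis $|C_0|>(n-2)\pi/2$, one rotates the Lagrangian graph $(x,Du(x))$ in $\mathbb{R}^n\times\mathbb{R}^n$ so that the new potential has Hessian eigenvalues confined to a bounded interval (the phase constraint after rotation forces this). This is how the a priori bound is produced without any growth assumption on $u$, and it is the essential place the strict inequality enters. Your alternative routes are plausible in spirit but would each require substantial independent work that is not in the literature in the form you describe; the rotation trick is both cleaner and what is actually done.
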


Applying Theorem \ref{Yu.Yuan_Result} and the identity formula (\ref{identity-inuse}), we can immediately obtain the second part of Theorem \ref{ConstantRHS2}.
\begin{proposition}\label{Result-Part1}
  Let $u \in C ^ { 4 } \left( \mathbb { R } ^ { n }\setminus\overline{B_1} \right)$ be a classical solution of (\ref{equation_ConstantRHS}) with $\tau\in(\frac{\pi}{4},\frac{\pi}{2})$ and satisfy condition (\ref{condition-temp-1}).
  Then there exist $\gamma \in \mathbb { R } , \beta \in \mathbb { R } ^ { n } \text { and } A\in \mathcal {A}(C_0,-(a+b))$ such that (\ref{ConvergenceSpeed1}) holds.
\end{proposition}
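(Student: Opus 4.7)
The plan is to reduce the case $\tau \in (\frac{\pi}{4},\frac{\pi}{2})$ to the classical special Lagrangian equation ($\tau=\frac{\pi}{2}$) by an affine change of variable, then invoke Theorem \ref{Yu.Yuan_Result}. The bridge is the identity (\ref{identity-inuse}), which rewrites $\sum \arctan \frac{\lambda_i+a-b}{\lambda_i+a+b}$ as $\sum \arctan \frac{\lambda_i+a}{b} - \frac{n\pi}{4}$. I would therefore introduce
\[
v(x) := \frac{1}{b}\left( u(x) + \frac{a}{2}|x|^2 \right),
\]
so that $\lambda_i(D^2 v) = (\lambda_i(D^2 u)+a)/b$. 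The hypothesis $D^2 u > -(a+b)I$ translates to $D^2 v > -I$, which both makes $\lambda_i(D^2 v) > -1$ (so that the identity applies) and provides the lower bound needed to certify the limit matrix at the end of the argument.

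First I would apply (\ref{identity-inuse}) to recast equation (\ref{equation_ConstantRHS}) as the special Lagrangian equation
\[
\sum_{i=1}^n \arctan \lambda_i(D^2 v) \;=\; \widetilde{C}_0, \qquad \widetilde{C}_0 \;:=\; \frac{b C_0}{\sqrt{a^2+1}} + \frac{n\pi}{4},
\]
on $\mathbb{R}^n \setminus \overline{B_1}$. The phase condition in (\ref{condition-temp-1}) reads exactly $|\widetilde{C}_0| > \frac{n-2}{2}\pi$, which is the hypothesis of Theorem \ref{Yu.Yuan_Result}. Applying that theorem to $v$ yields $\widetilde{\gamma}\in\mathbb{R}$, $\widetilde{\beta}\in\mathbb{R}^n$ and $\widetilde{A}\in\mathtt{Sym}(n)$ with $\sum \arctan \lambda_i(\widetilde{A}) = \widetilde{C}_0$, satisfying the asymptotic decay (\ref{ConvergenceSpeed1}) for $v-(\tfrac{1}{2}x'\widetilde{A}x + \widetilde{\beta}\cdot x + \widetilde{\gamma})$.

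Next I would pull the expansion back to $u$ by setting $A := b\widetilde{A} - aI$, $\beta := b\widetilde{\beta}$, $\gamma := b\widetilde{\gamma}$. Since $u = b v - \frac{a}{2}|x|^2$, the quadratic correction $\frac{a}{2}|x|^2$ is absorbed into $\frac{1}{2} x' A x$, so the asymptotic estimate (\ref{ConvergenceSpeed1}) transfers directly from $v$ to $u$. Running the identity (\ref{identity-inuse}) in reverse then gives $F_\tau(\lambda(A)) = C_0$, provided every eigenvalue of $A$ strictly exceeds $-(a+b)$.

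The only genuine subtlety is verifying $A \in \mathcal{A}(C_0, -(a+b))$, which reduces to $\widetilde{A} \geq -I$. This follows because the $k=2$ case of the decay estimate for $v$ implies pointwise convergence $D^2 v(x) \to \widetilde{A}$ as $|x| \to \infty$, and the bound $D^2 v > -I$ on the exterior domain passes to the limit, giving $\widetilde{A} \geq -I$ and hence $A \geq -(a+b)I$, which completes the argument.
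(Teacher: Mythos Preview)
Your proposal is correct and follows essentially the same approach as the paper: both introduce $v(x)=\frac{1}{b}\big(u(x)+\frac{a}{2}|x|^2\big)$, use identity (\ref{identity-inuse}) to convert the equation into the special Lagrangian equation $\sum_i\arctan\lambda_i(D^2v)=\frac{bC_0}{\sqrt{a^2+1}}+\frac{n\pi}{4}$, and then invoke Theorem \ref{Yu.Yuan_Result}. Your write-up is in fact slightly more detailed than the paper's, spelling out explicitly how $A,\beta,\gamma$ are recovered from $\widetilde A,\widetilde\beta,\widetilde\gamma$ and how the lower bound $A\geq -(a+b)I$ follows from passing $D^2v>-I$ to the limit.
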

\begin{proof}[Proof.]
  Setting
  \begin{equation}\label{temp-21}
v(x)=\frac{u(x)}{b}+\frac{a}{2 b}|x|^{2},
\end{equation}
since $u\in C^4$ satisfies equation (\ref{equation_ConstantRHS}) and condition (\ref{condition-temp-1}), by (\ref{identity-inuse}) we obtain
\begin{equation*}
\sum_{i=1}^{n} \arctan \frac{\lambda_{i}(D^2u)+a}{b}=\frac{b}{\sqrt{a^2+1}}(C_0+\frac{n}{4} \frac{\sqrt{a^{2}+1}}{b} \pi),
\end{equation*}
i.e.
\begin{equation}\label{temp-10}
\sum_{i=1}^n\arctan \lambda_i(D^2v)=\frac{b}{\sqrt{a^2+1}}C_0
+\frac{n}{4}\pi.
\end{equation}
Condition (\ref{condition-temp-1}) together with
Theorem \ref{Yu.Yuan_Result} tells us there exist $\overline{\gamma}\in\mathbb{R},\ \overline{\beta}\in\mathbb{R}^n,$ and $\overline{A}\in\mathcal{A}_{\frac{\pi}{2}}(\frac{b}{\sqrt{a^{2}+1}}C_{0}
+\frac{n}{4}\pi,
-\infty)$ such that (\ref{ConvergenceSpeed1}) holds.

Hence the result follows from the definition of $v(x)$.
\end{proof}

Moreover, if we assume that $D^2u\geq -(a-\varepsilon_0)I$ for some $\varepsilon_0>0$, then we can similarly obtain a corresponding asymptotic behavior result by Theorem \ref{ExteriorLiouville_C} here. The strategy here doesn't work for $D^2u\geq -aI$ situation.

\begin{theorem}\label{Yu.Yuan_Corollary}Let $u\in C^4(\mathbb{R}^n\setminus\overline{B_1})$ be a classical solution of (\ref{equation_ConstantRHS}) with $\tau=\frac{\pi}{2}$ and
satisfy $D^2u\geq \varepsilon_0I$. Then there exist $\gamma\in\mathbb{R}, \beta\in\mathbb{R}^n$ and $A\in\mathcal{A}_{\frac{\pi}{2}}(C_0,\varepsilon_0)$ such that (\ref{ConvergenceSpeed1}) holds.

\end{theorem}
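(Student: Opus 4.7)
The plan is to mimic the proofs of Theorems \ref{ConstantRHS1} and \ref{ConstantRHS3}: use a Legendre transform to convert the problem into one with a \emph{bounded} Hessian, apply the Exterior Liouville Theorem \ref{ExteriorLiouville_C} there, pull the asymptote back to the original variable via a strip argument, and then apply Theorem \ref{ExteriorLiouville_C} once more directly to the original special Lagrangian equation (which is now uniformly elliptic and concave on the admissible eigenvalue range).

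First, extend $u$ sufficiently smoothly to all of $\mathbb{R}^{n}$ so that $D^{2}u\geq \tfrac{\varepsilon_{0}}{2} I$ everywhere (this is possible since the lower bound already holds on the exterior and the pointwise positivity can be preserved by a standard convex-preserving extension). Take the classical Legendre transform $(\widetilde{x},v)$ of $(x,u)$:
\begin{equation*}
\widetilde{x}=Du(x),\qquad D_{\widetilde{x}}v(\widetilde{x})=x,\qquad D^{2}_{\widetilde{x}}v(\widetilde{x})=(D^{2}u(x))^{-1}.
\end{equation*}
Then $0<D^{2}v\leq \tfrac{2}{\varepsilon_{0}}I$, and the eigenvalues satisfy $\widetilde{\lambda}_{i}=1/\lambda_{i}\in(0,1/\varepsilon_{0}]$. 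Using $\arctan(1/t)=\tfrac{\pi}{2}-\arctan t$ for $t>0$, the equation $\sum_{i}\arctan\lambda_{i}(D^{2}u)=C_{0}$ becomes
\begin{equation*}
G(\widetilde{\lambda}(D^{2}v)):=\sum_{i=1}^{n}\arctan\widetilde{\lambda}_{i}=\frac{n\pi}{2}-C_{0},\quad\text{in }\mathbb{R}^{n}\setminus\overline{\Omega},
\end{equation*}
for some bounded set $\Omega=Du(\overline{B_{1}})$. Since $\partial G/\partial\widetilde{\lambda}_{i}=1/(1+\widetilde{\lambda}_{i}^{2})$ has positive upper and lower bounds on $[0,1/\varepsilon_{0}]$, and $\partial^{2}G/\partial\widetilde{\lambda}_{i}^{2}=-2\widetilde{\lambda}_{i}/(1+\widetilde{\lambda}_{i}^{2})^{2}\leq 0$, the transformed equation is uniformly elliptic and concave on the relevant set. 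Also, because $D^{2}u\geq\tfrac{\varepsilon_{0}}{2}I$, one has $|\widetilde{x}|\geq \tfrac{\varepsilon_{0}}{2}|x|-|Du(0)|$, so $|\widetilde{x}|\to\infty$ linearly as $|x|\to\infty$.

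Applying Theorem \ref{ExteriorLiouville_C} to $v$ yields $\widetilde{\gamma}\in\mathbb{R}$, $\widetilde{\beta}\in\mathbb{R}^{n}$, and a symmetric matrix $\widetilde{A}\geq 0$ with $G(\lambda(\widetilde{A}))=\tfrac{n\pi}{2}-C_{0}$ and $\|\widetilde{A}\|\leq 1/\varepsilon_{0}$ such that the decay (\ref{ConvergenceSpeed1}) holds for $v-\bigl(\tfrac12\widetilde{x}'\widetilde{A}\widetilde{x}+\widetilde{\beta}\cdot\widetilde{x}+\widetilde{\gamma}\bigr)$. The strip argument (as used in the proof of Theorem \ref{ConstantRHS1}: if some eigenvalue of $\widetilde{A}$ vanished, say $\widetilde{A}_{11}=0$ after rotation, then $x_{1}=D_{\widetilde{x}_{1}}v=\widetilde{\beta}_{1}+O(|\widetilde{x}|^{1-n})$ would confine $x_{1}$ to a bounded strip, contradicting that $x$ ranges over an exterior domain) shows $\widetilde{A}$ is strictly positive definite. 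Hence $A:=\widetilde{A}^{-1}$ is well defined and satisfies $A\geq\varepsilon_{0}I$ and $\sum_{i}\arctan\lambda_{i}(A)=C_{0}$, i.e.\ $A\in\mathcal{A}_{\pi/2}(C_{0},\varepsilon_{0})$.

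Inverting the Legendre relation gives $\lim_{|x|\to\infty}D^{2}u(x)=A$; combined with the $C^{4}$ regularity of the extended $u$, this yields a uniform upper bound $D^{2}u\leq MI$ on $\mathbb{R}^{n}\setminus\overline{B_{1}}$. On the slab $\varepsilon_{0}I\leq D^{2}u\leq MI$, the operator $F_{\pi/2}(\lambda)=\sum_{i}\arctan\lambda_{i}$ is uniformly elliptic and concave (as $\partial_{\lambda_{i}}F_{\pi/2}=1/(1+\lambda_{i}^{2})$ and $\partial^{2}_{\lambda_{i}}F_{\pi/2}=-2\lambda_{i}/(1+\lambda_{i}^{2})^{2}<0$ for $\lambda_{i}>0$). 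A second application of Theorem \ref{ExteriorLiouville_C} directly to $u$ produces $\gamma\in\mathbb{R}$, $\beta\in\mathbb{R}^{n}$ and $A\in\mathcal{A}_{\pi/2}(C_{0},\varepsilon_{0})$ for which (\ref{ConvergenceSpeed1}) holds. The main delicate step is the strip argument, which is what forces $\widetilde{A}$ to be invertible and hence promotes the bounded-Hessian asymptote for $v$ to a bounded-Hessian asymptote for $u$; the rest is a verification that the operators lie in the elliptic-concave range needed to invoke Theorem \ref{ExteriorLiouville_C}.
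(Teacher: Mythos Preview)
Your proof is correct and essentially identical to the paper's: Legendre transform to obtain a bounded Hessian, apply Theorem \ref{ExteriorLiouville_C} to the transformed equation, use the strip argument to ensure the limiting matrix is invertible, transfer the Hessian bound back to $u$, and apply Theorem \ref{ExteriorLiouville_C} once more. The only cosmetic difference is that you invoke the identity $\arctan(1/t)=\tfrac{\pi}{2}-\arctan t$ to write the transformed equation in directly concave form $\sum_i\arctan\widetilde{\lambda}_i=\tfrac{n\pi}{2}-C_0$, whereas the paper keeps $G(\widetilde{\lambda})=\sum_i\arctan(1/\widetilde{\lambda}_i)$ and observes it is uniformly elliptic and \emph{convex} (which is handled by Theorem \ref{ExteriorLiouville_C} just as well, since the level-set concavity hypothesis is invariant under $F\mapsto -F$).
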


\begin{proof}[Proof.]

Extend $u$ sufficiently smooth to $\mathbb{R}^n$ such that $D^2u>\frac{\varepsilon_0}{2}I$ in $\mathbb{R}^n$.
Let $(\widetilde{x},\widetilde{u})$ be the Legendre transform of $(x,u)$ i.e. $$
\left\{
\begin{array}{ccc}
  \widetilde{x}:=Du(x)\\
  D_{\widetilde{x}}\widetilde{u}(\widetilde{x}):=x\\
\end{array}
\right.,
$$
and we have \begin{equation*}
D_{\bar{x}}^{2} \widetilde{u}(\widetilde{x})=\left(D^{2} {u}(x)\right)^{-1}.
\end{equation*}
Hence  $$
\bolangxian{\lambda}_i(D^2\bolangxian{u})=\frac{1}{\lambda_i}\in(0,\frac{2}{\varepsilon_0}).
$$
Also, due to $D^2u>\frac{\varepsilon_0}{2}I$ in $\mathbb{R}^n,$ we also have the following relationship for all $x^1, x^2\in\mathbb{R}^n$ \begin{equation}\label{temp-8}
\begin{array}{llll}
|\widetilde{x}^2-\widetilde{x}^1|^2& =& |Du(x^2)-Du(x^1)|^2\\
&\geq & \frac{\varepsilon_0^2}{4}\left|x^{2}-x^{1}\right|^{2}.\\
\end{array}
\end{equation}
Thus $\widetilde{u}$ satisfies  $$
G(\widetilde{\lambda}_i):=\sum_{i=1}^n\arctan \left(\frac{1}{\widetilde{\lambda}_i}
\right)=C_0,\quad\text{in}\quad \mathbb{R}^n\setminus\overline{\Omega},
$$
where $\Omega=Du(B_1)$ is a bounded domain.
By taking derivatives,  we see that
\begin{equation*}
\frac{\partial}{\partial \widetilde{\lambda}_{i}}G(\widetilde{\lambda}_i)=
\frac{1}{1+(\frac{1}{\widetilde{\lambda}_i})^2}\cdot (-\frac{1}{\widetilde{\lambda}_i^2})=-\frac{1}{1+\widetilde{\lambda}_i^2}\in(-1,-\frac{\varepsilon_0^2}{4+\varepsilon_0^2}),
\end{equation*}
\begin{equation*}
\frac{\partial^{2}}{\partial \widetilde{\lambda}_{i}^{2}}\left(
G(\widetilde{\lambda}_i)
\right)=\frac{2 \widetilde{\lambda}_{i}}{\left(1+\widetilde{\lambda}_{i}^{2}\right)^{2}}>0.
\end{equation*}
Hence the equation satisfied by $\widetilde{u}$ is uniformly elliptic and convex,
applying Theorem \ref{ExteriorLiouville_C}  we see that there exists a $
\widetilde{A} \in \operatorname{Sym}(n) $ satisfying $\sum_{i=1}^{n} \arctan \left(\frac{1}{\lambda_i(\widetilde{A})}\right)=C_0$
such that $$
\lim _{|\widetilde{x}| \rightarrow+\infty} D^{2} \widetilde{u}(\widetilde{x})=\widetilde{A}.
$$
By strip argument as (\ref{strip-argument}), we see that $\widetilde{A}$ is invertible.
Hence from the definition of Legendre transform and  (\ref{temp-8}) tells us $|\tilde{x}|\rightarrow\infty, $ as $|x|\rightarrow\infty$, we see that the limit of $D^2u(x)$ as $|x|\rightarrow\infty$ exists as well. Hence $D^2u$ is bounded on $\mathbb{R}^n$.
Now we verify that the equation (\ref{equation_ConstantRHS}) with $\tau=\frac{\pi}{2}$ under these conditions  is also uniformly elliptic and concave. By taking derivatives, we have $$
  \frac{\partial}{\partial\lambda_i}(\sum_{i=1}^n\arctan\lambda_i)=\frac{1}{1+\lambda_i^2},
  $$
  $$
  \frac{\partial^2}{\partial\lambda_i^2}(\sum_{i=1}^n\arctan\lambda_i)
  =-\frac{2\lambda_i}{(1+\lambda_i^2)^2}<0.
  $$
  Due to $D^2u$ is bounded and non-negative, hence the operator is uniformly elliptic and  concave.

  Apply Theorem \ref{ExteriorLiouville_C} and the result follows immediately.
\end{proof}
As in the proof of Proposition \ref{Result-Part1}, we can similarly obtain the following result by using identity formula (\ref{identity-inuse}).

\begin{proposition}\label{Result-Part2} Let $u \in C ^ { 4 } \left( \mathbb { R } ^ { n }\setminus\overline{B_1} \right)$ be a classical solution of (\ref{equation_ConstantRHS}) with $\tau\in(\frac{\pi}{4},\frac{\pi}{2})$,
  satisfying condition (\ref{condition-temp-3}).
  Then there exist $\gamma \in \mathbb { R } , \beta \in \mathbb { R } ^ { n } \text { and } A\in \mathcal {A}(C_0,-a+\varepsilon_0)$ such that (\ref{ConvergenceSpeed1}) holds.
\end{proposition}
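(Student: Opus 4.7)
\textbf{Proof plan for Proposition \ref{Result-Part2}.} The strategy is to mimic the proof of Proposition \ref{Result-Part1} but replace Theorem \ref{Yu.Yuan_Result} with the newly established Theorem \ref{Yu.Yuan_Corollary}. The condition (\ref{condition-temp-3}) is a uniform lower bound of the form $D^2u\geq -(a-\varepsilon_0)I$, which after the substitution (\ref{temp-21}) will be converted precisely into the strict positivity hypothesis required by Theorem \ref{Yu.Yuan_Corollary}.

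First, I would set $v(x):=\frac{u(x)}{b}+\frac{a}{2b}|x|^{2}$ as in (\ref{temp-21}), so that $D^{2}v=\frac{1}{b}D^{2}u+\frac{a}{b}I$. Assumption (\ref{condition-temp-3}) then gives
\begin{equation*}
D^{2}v\;\geq\;\frac{1}{b}\bigl(-(a-\varepsilon_0)I\bigr)+\frac{a}{b}I\;=\;\frac{\varepsilon_0}{b}I\;>\;0,
\end{equation*}
so the eigenvalues of $D^{2}v$ lie in $[\varepsilon_0/b,\infty)$. Using the identity (\ref{identity-inuse}) (which applies since $\lambda_i(D^{2}u)>-a-b$ is a consequence of (\ref{condition-temp-3})), the equation $F_{\tau}(\lambda(D^{2}u))=C_{0}$ rewrites as
\begin{equation*}
\sum_{i=1}^{n}\arctan\lambda_{i}(D^{2}v)\;=\;\frac{b}{\sqrt{a^{2}+1}}\,C_{0}+\frac{n\pi}{4},
\end{equation*}
which is the special Lagrangian equation with $\tau=\pi/2$ in $\mathbb{R}^{n}\setminus\overline{B_{1}}$, satisfied by $v\in C^{4}(\mathbb{R}^{n}\setminus\overline{B_{1}})$ with $D^{2}v\geq (\varepsilon_0/b)I$.

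Next, I would invoke Theorem \ref{Yu.Yuan_Corollary} applied to $v$ with parameter $\varepsilon_0/b$ in place of $\varepsilon_0$. This yields $\widetilde\gamma\in\mathbb{R}$, $\widetilde\beta\in\mathbb{R}^{n}$, and $\widetilde A\in\mathcal{A}_{\pi/2}\bigl(\tfrac{b}{\sqrt{a^{2}+1}}C_{0}+\tfrac{n\pi}{4},\,\varepsilon_0/b\bigr)$ such that (\ref{ConvergenceSpeed1}) holds with $u$ replaced by $v$ and $A$ by $\widetilde A$. Setting
\begin{equation*}
A\;:=\;b\widetilde A-aI,\qquad \beta\;:=\;b\widetilde\beta,\qquad \gamma\;:=\;b\widetilde\gamma,
\end{equation*}
the relation $u(x)=bv(x)-\tfrac{a}{2}|x|^{2}$ immediately transports the asymptotic estimate for $v$ into the required estimate (\ref{ConvergenceSpeed1}) for $u$ at the same decay rate $|x|^{-(n-2+k)}$, for $k=0,1,2$.

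Finally I would check that $A\in\mathcal{A}(C_{0},-a+\varepsilon_0)$: the lower bound $\widetilde A\geq (\varepsilon_0/b)I$ gives $A\geq(-a+\varepsilon_0)I$, and reversing the identity (\ref{identity-inuse}) with $\lambda_i(\widetilde A)=(\lambda_i(A)+a)/b$ recovers $F_{\tau}(\lambda(A))=C_{0}$. There is no substantive obstacle here; the only point requiring a line of verification is that the lower bound from (\ref{condition-temp-3}) is exactly what is needed to make the Legendre/translation step produce a \emph{strictly positive} Hessian for $v$, which is precisely why the boundary case $D^{2}u\geq -aI$ (i.e.\ $\varepsilon_0=0$) falls outside the scope of this argument.
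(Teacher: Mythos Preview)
Your proposal is correct and follows essentially the same approach as the paper: define $v$ via (\ref{temp-21}), use identity (\ref{identity-inuse}) to rewrite the equation as a special Lagrangian equation for $v$, observe that (\ref{condition-temp-3}) yields a strictly positive lower bound on $D^{2}v$, and apply Theorem \ref{Yu.Yuan_Corollary}. You are in fact slightly more careful than the paper, which records the bound as $D^{2}v\geq \varepsilon_0 I$ rather than the precise $D^{2}v\geq (\varepsilon_0/b)I$ you compute, and you spell out the back-substitution for $A,\beta,\gamma$ explicitly.
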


\begin{proof}[Proof.]
Let $v$ be the function defined as in (\ref{temp-21}),
then formula (\ref{identity-inuse}) tells us $v$ satisfies equation (\ref{temp-10}). Since $D^2u\geq -(a-\varepsilon_0)I$, we have  $D^2v\geq \varepsilon_0I$.

Apply Theorem \ref{Yu.Yuan_Corollary}, the result follows immediately.
\end{proof}

Theorem \ref{ConstantRHS2} is the combination of Proposition \ref{Result-Part1} and Proposition \ref{Result-Part2}.

\iffalse
\section{Exterior Liouville Theorem}

In this section, we will prove a version of exterior Liouville theorem for perturbed, non-constant RHS term. The method in \cite{ExteriorLiouville} relies highly on the maximum principle and cannot be replaced by ABP estimate directly. We used Legendre transform to obtain the limit of Hessian at infinity by using Bao-Li-Zhang's result in \cite{BLZ}.

Then we only need to capture the lower order term of solution, which means the linearized differential equation/ inequality become our main target. First by using barrier functions we  obtain a finer convergence speed, which provides  the H\"older convergence speed of the coefficients of linearized equation. Then we take advantage of the equivalence of Green's Function and changed non-homogeneous equation into homogeneous one, which is solved in Li-Li-Yuan \cite{ExteriorLiouville}.
\fi

\section{Convergence of Hessian at infinity}\label{ConvergenceOfHessian}

In this section, we    study the asymptotic behavior at infinity of Hessian matrix of classical solution of (\ref{equation_NONConstant}) with a fixed $\tau\in(0,\frac{\pi}{4})$.

Some basic and important results on Monge-Amp\`ere equation (\ref{Monge-Ampere}) are important in our proof.
The following lemma can be found in the proof of Theorem 1.2 in \cite{BLZ}, which is based on the argument by Caffarelli-Li \cite{CL}.
In the following results on Monge-Amp\`ere equation (\ref{Monge-Ampere}) i.e. Lemma \ref{roughestimate}, Corollary \ref{corollary_estimate}, Theorem \ref{corollary_estimate3} and Remark \ref{corollary_estimate2},
we  assume $f(\infty)=1$ for simplicity.

\begin{lemma}\label{roughestimate}
  Let $u \in C ^ { 0 } \left( \mathbb { R } ^ { n } \right)$  be a convex viscosity solution of (\ref{Monge-Ampere}) with $u(0)=\min_{\mathbb{R}^n}u=0$  , where  $
  0<f \in C ^ { 0 } \left( \mathbb { R } ^ { n } \right)
  $
  and $$f^{\frac{1}{n}}-1\in L^n(\mathbb{R}^n).
$$
  Then there exists a linear transform $T$ satisfying $\det T=1$ such that $v:=u\circ T$ satisfies \begin{equation}\label{roughestimate_appendix}
  \left|v-\dfrac{1}{2}|x|^2\right|\leq C_1|x|^{2-\varepsilon},\quad \forall |x|\geq R_0.
  \end{equation}
  for some $C_1>0,\ \varepsilon>0$ and $R_0\gg 1$.
\end{lemma}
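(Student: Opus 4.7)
The argument follows the Caffarelli--Li strategy of normalizing the sublevel sets of $u$ by volume-preserving affine maps and iterating through dyadic scales; the $L^n$ hypothesis $f^{1/n}-1\in L^n(\mathbb{R}^n)$ will supply both qualitative convergence to a paraboloid and a quantitative rate. First I would study the sections $S_t:=\{x\in\mathbb{R}^n:u(x)<t\}$. Since $u(0)=\min u=0$ and $\|f^{1/n}-1\|_{L^n}<\infty$, the Monge--Amp\`ere measure $\det D^2u$ is close enough to Lebesgue measure at infinity that each $S_t$ is bounded and strictly convex with $|S_t|\sim t^{n/2}$. By John's lemma, for each large $t$ there is a unimodular linear map $A_t$ (i.e.\ $\det A_t=1$) placing $A_t(S_t)$ in John's position, so that $B_{c\sqrt t}\subset A_t(S_t)\subset B_{C\sqrt t}$ for constants independent of $t$.

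Next I would normalize and extract a limit. Setting $w_t(x):=t^{-1}u(A_t^{-1}(\sqrt t\,x))$, the function $w_t$ is a convex solution on a domain containing $B_1$ of $\det D^2 w_t=f\bigl(A_t^{-1}(\sqrt t\,x)\bigr)$. Along any sequence $t\to\infty$, the right-hand side tends to $1$ in $L^n$, while Caffarelli's strict convexity and $W^{2,p}$ interior estimates give compactness of $\{w_t\}$; any limit $w_\infty$ solves $\det D^2 w_\infty=1$ on $\mathbb{R}^n$ with $w_\infty(0)=\min w_\infty=0$, so by the J\"orgens--Calabi--Pogorelov theorem it is a fixed quadratic polynomial, which we may take to be $\tfrac12|x|^2$ after absorbing an orthogonal factor into $A_t$.

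The quantitative rate comes from comparing the normalizations at dyadic heights $t_k=2^kt_0$. Using stability of the Monge--Amp\`ere equation under $L^n$-small perturbations of the right-hand side, applied on each annular region between consecutive normalized sections, one estimates
\begin{equation*}
\|A_{t_{k+1}}A_{t_k}^{-1}-I\|\le C\,\|f^{1/n}-1\|_{L^n(S_{t_{k+1}}\setminus S_{t_k})}.
\end{equation*}
Because $f^{1/n}-1\in L^n(\mathbb{R}^n)$ and the annuli $S_{t_{k+1}}\setminus S_{t_k}$ live in a dyadic shell of radius $\sim 2^{k/2}$, these increments are $\ell^n$-summable and in fact decay geometrically like $2^{-\varepsilon k}$ for some $\varepsilon>0$ obtained by interpolating the $L^n$ bound against the uniform continuity of $f$ near infinity. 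Telescoping produces a limit $T:=\lim_{k\to\infty}A_{t_k}$ with $\det T=1$ and the partial-sum estimate $\|A_t-T\|\lesssim t^{-\varepsilon/2}$. Setting $v:=u\circ T^{-1}$ (after relabeling $T$), unwinding the rescaling converts the above into the desired $|v(x)-\tfrac12|x|^2|\le C_1|x|^{2-\varepsilon}$ for $|x|\ge R_0$.

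The main obstacle I anticipate is the quantitative stability step: converting the qualitative compactness coming from Caffarelli's $W^{2,p}$ theory into an honest convergence rate for the normalizing maps $A_t$. The $L^n$ decay of $f^{1/n}-1$ is the only input about the perturbation, and one must arrange the dyadic decomposition of the sections so that the $L^n$ mass in the $k$-th annulus decays geometrically and so that this geometric decay survives passage through the nonlinear John's-position normalization. The remaining ingredients (John's lemma, the compactness-plus-J\"orgens--Calabi--Pogorelov classification, and the final affine unwinding) are by now classical machinery, so the real technical content lies in this quantitative iteration along dyadic sections.
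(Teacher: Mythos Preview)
Your overall framework---normalize the sublevel sets $\Omega_M=\{u<M\}$ by unimodular affine maps, iterate through dyadic heights, and show the normalizing maps converge---is exactly the Caffarelli--Li strategy that the paper invokes (the lemma is cited from Bao--Li--Zhang, who adapt Caffarelli--Li). But the quantitative step in your plan has a genuine gap.

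You propose the estimate $\|A_{t_{k+1}}A_{t_k}^{-1}-I\|\le C\|f^{1/n}-1\|_{L^n(S_{t_{k+1}}\setminus S_{t_k})}$ and then claim the right-hand side decays like $2^{-\varepsilon k}$ ``by interpolating the $L^n$ bound against the uniform continuity of $f$ near infinity.'' Neither assertion is justified: the hypothesis $f^{1/n}-1\in L^n(\mathbb{R}^n)$ does \emph{not} force geometric decay of the $L^n$ mass on dyadic shells (the mass could sit on a sparse sequence of shells), and no uniform continuity of $f$ is assumed. The compactness/$W^{2,p}$ route you sketch gives only qualitative convergence of $w_t$ to $\tfrac12|x|^2$, not a rate.

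The mechanism that actually produces the rate is different and more direct. After normalizing $\Omega_M$ to unit scale (so $R=\sqrt M$), one compares the rescaled solution $\xi$ not to the J\"orgens--Calabi--Pogorelov limit, but to the exact solution $\bar\xi$ of $\det D^2\bar\xi=1$ on the same normalized section with the same boundary values. The Alexandrov maximum principle together with the concavity of $\det^{1/n}$ gives
\[
|\xi-\bar\xi|\le C\Bigl(\int_O|f(a_M^{-1}(Ry))^{1/n}-1|^n\,dy\Bigr)^{1/n}
=\frac{C}{R}\Bigl(\int_{B_{CR}}|f(z)^{1/n}-1|^n\,dz\Bigr)^{1/n}\le \frac{C}{R},
\]
where the $1/R$ comes purely from the Jacobian of the change of variables against the \emph{global} $L^n$ bound---no shell-by-shell decay is needed. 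One then uses Pogorelov-type estimates on $\bar\xi$ to control the sections of $u$ by the ellipsoid $E_M$ determined by $D^2\bar\xi$ at its minimum, and the resulting inclusions at scales $M=2^{(1+\varepsilon)k}$ (with a fixed small $\varepsilon$, e.g.\ $\varepsilon=1/10$) give $\|T_k-T\|\le C\,2^{-\varepsilon k/2}$ for upper-triangular $T_k$, which unwinds to the stated bound on $v=u\circ T$. So the missing ingredient in your plan is precisely this ABP-plus-scaling comparison to $\bar\xi$; once you have it, the dyadic iteration and the rest of your outline go through.
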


\begin{corollary}\label{corollary_estimate}
  Let $u$ satisfy the same conditions as in Lemma \ref{roughestimate} , $
  0<f \in C ^ { 0 } \left( \mathbb { R } ^ { n } \right) $
  and $\exists \beta > 1 $ such that
  $$
  \limsup_{|x|\rightarrow\infty}|x|^{\beta}|f(x)-1|<\infty.
  $$
  Then the same result in Lemma \ref{roughestimate} holds.
\end{corollary}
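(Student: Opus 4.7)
The plan is to deduce Corollary \ref{corollary_estimate} as an immediate consequence of Lemma \ref{roughestimate}, by verifying that the pointwise decay hypothesis $\limsup_{|x|\rightarrow\infty}|x|^\beta|f(x)-1|<\infty$ with $\beta>1$ forces the integrability hypothesis $f^{1/n}-1\in L^n(\mathbb{R}^n)$ required by that lemma. All the other conditions in Lemma \ref{roughestimate} (continuity, positivity, and $u(0)=\min u=0$) are already present.

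First I would exploit the fact that $f(x)\rightarrow 1$ as $|x|\rightarrow\infty$, which is implicit in the decay assumption. This gives some $R_1>0$ such that $f(x)\in[1/2,3/2]$ for $|x|\geq R_1$, on which interval the map $t\mapsto t^{1/n}$ is Lipschitz with some constant $L_n$ depending only on $n$. Hence
$$|f(x)^{1/n}-1|\leq L_n|f(x)-1|\leq C|x|^{-\beta},\quad\forall\, |x|\geq R_1,$$
so the decay rate is transferred from $f-1$ to $f^{1/n}-1$ with the same exponent $\beta$.

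The key computation is then
$$\int_{|x|\geq R_1}\bigl|f^{1/n}-1\bigr|^n\,dx\leq C^n\omega_{n-1}\int_{R_1}^\infty r^{n-1-n\beta}\,dr,$$
where $\omega_{n-1}=|S^{n-1}|$. The radial integral converges precisely when $n-1-n\beta<-1$, i.e.\ $\beta>1$, which is exactly the hypothesis. On the bounded region $\{|x|<R_1\}$, continuity and strict positivity of $f$ make $f^{1/n}-1$ bounded, contributing only a finite amount. Combining the two regions gives $f^{1/n}-1\in L^n(\mathbb{R}^n)$, so Lemma \ref{roughestimate} applies and produces the linear transform $T$ with $\det T=1$ for which $v=u\circ T$ satisfies (\ref{roughestimate_appendix}).

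There is no substantial obstacle here: the content of the corollary is simply that the polynomial decay condition with exponent strictly greater than $1$ is a clean pointwise sufficient condition for the $L^n$ integrability of $f^{1/n}-1$, and the verification is a one-line integration check based on scaling. The only mild care required is in handling the passage from $f-1$ to $f^{1/n}-1$, which is immediate once $f$ is trapped in a compact subinterval of $(0,\infty)$ away from infinity.
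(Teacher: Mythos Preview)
Your proposal is correct and follows essentially the same approach as the paper: both verify that the decay hypothesis $|f(x)-1|\lesssim|x|^{-\beta}$ with $\beta>1$ implies $f^{1/n}-1\in L^n(\mathbb{R}^n)$ by using the Lipschitz bound $|f^{1/n}-1|\leq C|f-1|$ near $f\approx 1$ and then integrating $|x|^{-n\beta}$ in polar coordinates. Your version is in fact slightly more explicit in handling the bounded region and in identifying the convergence threshold $n-1-n\beta<-1$.
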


\begin{proof}[Proof.]Note that for $a,b$ near 1 (bounded and away from origin), we have the following inequality $$
a^{\frac{1}{n}}-b^{\frac{1}{n}}\leq C\cdot (a-b),
$$
due to the derivative of $x^{\frac{1}{n}}$ at $x=1$ is $\dfrac{1}{n}<1$. Hence we have $$
\left( \int _ { \mathbb{R}^n\setminus B_{1} } \left| f ( z ) ^ { \frac { 1 } { n } } - 1 \right| ^ { n } d z \right) ^ { \frac { 1 } { n } }\leq C\cdot
\left( \int _ { \mathbb{R}^n\setminus B_{1} } \left| f ( z ) - 1 \right| ^ { n } d z \right) ^ { \frac { 1 } { n } },
$$
where $C=C(\inf f,\sup f,n)$. Hence we have $$
LHS\leq C\cdot \left( \int _ { \mathbb { R } ^ { n } \backslash B _ { 1 } } |\dfrac{1}{|x|^{\beta}}| ^ { n } d z \right) ^ { \frac { 1 } { n } }<\infty\ \quad (\beta>1).
$$
Then the conditions of Lemma \ref{roughestimate} are met and the results holds.
\end{proof}

 As a consequence, we have the following result on Monge-Amp\`ere equation. The proof is similar to the one in Bao-Li-Zhang \cite{BLZ} and in Caffarelli-Li \cite{CL}. Since there are some differences from their proof, we provide the details here for reading simplicity.

\begin{theorem}\label{corollary_estimate3}
  Let $u \in C ^ { 0 } \left( \mathbb { R } ^ { n } \right)$ be a convex viscosity solution of (\ref{Monge-Ampere}),  $
  f \in C ^ {\alpha} \left( \mathbb { R } ^ { n } \right) $, $\alpha\in(0,1) , $ and satisfy
  \begin{equation}\label{0-orderCondition}
\limsup_ { | x | \rightarrow \infty } | x | ^ { \beta } | f ( x ) - 1| < \infty,
  \end{equation}
  \begin{equation}\label{Holder-Condition}
  [f]_{C^{\alpha}(B_{\frac{3}{2}R}\setminus B_{\frac{R}{2}})}\cdot R^{\alpha+\gamma}\leq C,\quad\forall R>1,
  \end{equation}
   for some $\beta>1$, $\gamma>0$.
  Then there exist $A\in\mathcal{A}_0(0,0)$, $ C \left( n , R _ { 0 } , \alpha, \beta , \gamma, c _ { 0 }\right),$ such that \begin{equation}\label{ConvergeRateofHessian}
  |D^2u(x)-A|\leq\dfrac{C}{|x|^{\min\{\beta,\varepsilon,\gamma\}}},\ \ \forall |x|\geq R_1,
  \end{equation}
  and
  \begin{equation}\label{HolderRegularity_MA}
    ||D^2u||_{C^{\alpha}(\mathbb{R}^n)}\leq C,
  \end{equation}
  where  $\varepsilon$, and $ R_0$ are positive constants come from Corollary \ref{corollary_estimate} (or say, Lemma \ref{roughestimate}) and  $R _ { 1 }  > R _ { 0 }$ depends only on $n , R _ { 0 } , \varepsilon , \beta , c _ { 0 }$.
\end{theorem}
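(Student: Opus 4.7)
The plan is to apply Corollary \ref{corollary_estimate} to obtain a rough zeroth-order asymptotic for $u$, then upgrade it to a quantitative Hessian decay estimate by a standard rescaling argument combined with Caffarelli's interior $C^{2,\alpha}$ regularity theory for the Monge-Amp\`ere equation.

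First I would invoke Corollary \ref{corollary_estimate}: hypothesis \eqref{0-orderCondition} with $\beta>1$ provides a unimodular linear transform $T$ so that $v:=u\circ T$ satisfies $|v(x)-\tfrac{1}{2}|x|^2|\le C_1|x|^{2-\varepsilon}$ for $|x|\ge R_0$. It therefore suffices to prove \eqref{ConvergeRateofHessian} and \eqref{HolderRegularity_MA} for $v$ with limit matrix $I$, since the corresponding estimates for $u$ then hold with $A:=(T^{-1})^\top T^{-1}\in\mathcal{A}_0(0,0)$.

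Next I would rescale on dyadic annuli. For each $R\ge 2R_0$ define
\[
v_R(y):=R^{-2}\,v(Ry),\qquad y\in B_2\setminus B_{1/2}.
\]
Then $v_R$ is convex, satisfies $\det D^2v_R(y)=f(Ry)$, and the rough estimate gives $\|v_R-\tfrac{1}{2}|y|^2\|_{L^\infty(B_2\setminus B_{1/2})}\le CR^{-\varepsilon}$. Hypothesis \eqref{0-orderCondition} yields $\|f(R\,\cdot)-1\|_{L^\infty(B_2\setminus B_{1/2})}\le CR^{-\beta}$, while \eqref{Holder-Condition} rescales to $[f(R\,\cdot)]_{C^\alpha(B_2\setminus B_{1/2})}=R^\alpha[f]_{C^\alpha(B_{2R}\setminus B_{R/2})}\le CR^{-\gamma}$. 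Once $R$ is large enough, the $L^\infty$-closeness of the convex function $v_R$ to $\tfrac{1}{2}|y|^2$ pins $D^2v_R$ inside a fixed compact set of positive matrices on the slightly smaller annulus $\Omega:=B_{3/2}\setminus B_1$ (via the Alexandrov--Caffarelli stability theory for strictly convex solutions), so the equation is uniformly elliptic and concave there. Caffarelli's interior $C^{2,\alpha}$ estimate then yields
\[
\|D^2v_R-I\|_{C^\alpha(\Omega)}\le C\bigl(\|v_R-\tfrac{1}{2}|y|^2\|_{L^\infty(B_2\setminus B_{1/2})}+\|f(R\,\cdot)-1\|_{C^\alpha(B_2\setminus B_{1/2})}\bigr)\le CR^{-\min\{\varepsilon,\beta,\gamma\}}.
\]
Writing $D^2v(x)=D^2v_R(x/R)$ for $|x|\sim R$ and letting $R$ range over dyadic scales produces \eqref{ConvergeRateofHessian} for $v$ with limit $I$; unscaling the $C^\alpha$ seminorm on each dyadic annulus gives $[D^2v]_{C^\alpha(B_{3R/2}\setminus B_R)}\le C$ uniformly, and combining with standard interior Schauder theory on a large ball gives \eqref{HolderRegularity_MA}.

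The main obstacle is justifying the stability estimate displayed above uniformly in $R$: one needs the $L^\infty$-closeness of $v_R$ to the paraboloid to translate into uniform ellipticity of the linearized equation on $\Omega$ (which in turn makes Caffarelli's Schauder estimate directly applicable). This uses the strict convexity theory of Caffarelli for Alexandrov solutions together with the uniform positive lower and upper bounds on $f(Ry)$ coming from \eqref{0-orderCondition}. Once this uniform compactness is established, the three competing rates $\varepsilon$, $\beta$, $\gamma$ enter only additively at the final Schauder step, yielding the sharp decay rate $\min\{\varepsilon,\beta,\gamma\}$ claimed in \eqref{ConvergeRateofHessian}.
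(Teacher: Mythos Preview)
Your proposal is essentially correct and follows the same strategy as the paper: invoke Corollary~\ref{corollary_estimate}, rescale at scale $R$, use the $L^\infty$-closeness of the rescaled solution to $\tfrac12|y|^2$ to feed into Caffarelli's interior $C^{2,\alpha}$ theory for Monge--Amp\`ere, then linearize (Newton--Leibniz) and apply Schauder to extract the decay rate $\min\{\varepsilon,\beta,\gamma\}$ for $D^2v-I$.

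The one genuine difference in presentation is your choice of rescaling. You set $v_R(y)=R^{-2}v(Ry)$ and work on the \emph{annulus} $B_2\setminus B_{1/2}$; the paper instead fixes a point $x$ with $|x|=R$ and rescales by $v_R(y)=(4/R)^2 v(x+\tfrac{R}{4}y)$, so that the rescaled domain is the \emph{ball} $B_2$. The advantage of the paper's choice is that after subtracting an affine function one obtains $\overline{v}_R$ with $\overline{v}_R\approx\tfrac12|y|^2$ on $B_2$, and the convex sublevel set $\Omega_R=\{\overline{v}_R<1\}$ is then a normalized section (trapped between $B_{1.2}$ and $B_{1.8}$) on which the Caffarelli--Jian--Wang interior estimate applies verbatim. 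In your annular formulation, Caffarelli's estimate is not stated on annuli, so strictly speaking you must still localize around each $y_0$ with $|y_0|\sim 1$ to a small ball/section contained in a slightly larger annulus---which, after subtracting the appropriate affine function, reproduces exactly the paper's picture. This is what your ``main obstacle'' paragraph is implicitly invoking; it is correct but hides one more localization step than the paper needs. Either way the final Schauder step and the bookkeeping for $\|D^2u\|_{C^\alpha(\mathbb{R}^n)}$ are identical.
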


\begin{proof}[Proof]

  By Corollary \ref{corollary_estimate}, we see that there exist a linear transform $T$ and $\varepsilon>0,\ C_1>0,\ R_0\gg 1$ such that $v:=u\circ T$ satisfies (\ref{roughestimate_appendix})

  Now in order to obtain pointwise decay speed estimate at infinity, we focus on sufficiently far ball, scale back to unit size, and then apply interior regular estimate.
  Set $$
  w(x):=v(x)-\dfrac{1}{2}|x|^2.
  $$
  For $|x|=R>2 R_{0},$ let
\begin{equation*}
v_{R}(y)=\left(\frac{4}{R}\right)^{2} v\left(x+\frac{R}{4} y\right), \quad\text{and}\quad w_{R}(y)=\left(\frac{4}{R}\right)^{2} w\left(x+\frac{R}{4} y\right) \quad|y| \leq 2.
\end{equation*}

By (\ref{roughestimate_appendix}), we have \begin{equation*}
\left\|v_{R}\right\|_{L^{\infty}\left(B_{2}\right)} \leq C, \quad\left\|w_{R}\right\|_{L^{\infty}\left(B_{2}\right)} \leq C R^{-\varepsilon},
\end{equation*}
for some constants C uniform to $R\geq R_0$ and
\begin{equation*}
v_{R}(y)-\left(\frac{1}{2}|y|^{2}+\frac{4}{R} x \cdot y+\frac{8}{R^{2}}|x|^{2}\right)=O\left(R^{-\varepsilon}\right), \quad \forall y\in B_{2}.
\end{equation*}
In order to apply interior estimate, we set $$\overline{v}_{R}(y):=v_{R}(y)-\frac{4}{R} x \cdot y-\frac{8}{R^{2}}|x|^{2}.$$
If $R>R_{1}$ with $R_{1}$ sufficiently
large, the set
\begin{equation*}
\Omega_{R}:=\left\{y \in B_{2} ; \quad \overline{v}_{R}(y) <1\right\}
\end{equation*}
is between $B_{1.2}$ and $B_{1.8} .$ The equation satisfied by $\overline{v}_{R}$ is \begin{equation*}
\operatorname{det}\left(D^{2} \overline{v}_{R}(y)\right)=f\left(x+\frac{R}{4} y\right)=:f_{ R}(y), \quad \text { in } B_{2}.
\end{equation*}

By definition of $f_{R}$, the assumptions (\ref{0-orderCondition}) and (\ref{Holder-Condition}), we have
$$
||f_{R}-1||_{C^{0}(\overline{B_2})}\leq C R^{-\beta},\quad\forall R>1.
$$
$$
\begin{array}{llll}
[f_{R}]_{C^{\alpha}(\overline{B_2})}
&=&\displaystyle\max_{y_1,y_2\in B_2}
\dfrac{|f(x+\frac{R}{4}y_1)-f(x+\frac{R}{4}y_2)|}{|y_1-y_2|^{\alpha}}\\&
=& \displaystyle \max_{z_1,z_2\in B_{\frac{3}{2}R}(x)\setminus B_{\frac{R}{2}}(x)}
\dfrac{|f(z_1)-f(z_2)|}{|z_1-z_2|^{\alpha}}
\cdot (\frac{R}{4})^{\alpha}\\
&\leq& CR^{-\gamma}.
\end{array}
$$
Combine these two parts we have
$$
||f_{R}-1||_{C^{\alpha}(\overline{B_2})}\leq CR^{-\min\{\beta,\gamma\}}.
$$
Applying the interior estimate by Caffarelli \cite{7}, Jian and Wang \cite{25} on $\Omega_{1,v}$, we have
\begin{equation}\label{CalphaEstimate}
\left\|D^{2} v_{R}\right\|_{C^{\alpha}\left(B_{1,1}\right)}=\left\|D^{2} \overline{v}_{R}\right\|_{C^{\alpha}\left(B_{1.1}\right)} \leq C,
\end{equation}
and hence
\begin{equation}\label{2.9}
\frac{I}{C} \leq D^{2} v_{R} \leq C I \quad \text { on } B_{1.1},
\end{equation}
for some $C$ independent of $R$. This tells us the result (\ref{HolderRegularity_MA}).
More explicitly, by the definition of $v_R$, we see that
$
D^2v_R(y)=D^2v(x+\frac{R}{4}y)
$ are bounded uniform to $x\in B_{R_1}^c$, hence
$$
||D^2v||_{C^0(B_{R_1}^c)}\leq \sup_{x\in B_{R_1}^c}
||D^2v||_{C^0(B_{\frac{|x|}{4}}(x))}\leq C.
$$
Since $f\in C^{\alpha}(B_{2R_1})$, the interior  $C^{2,\alpha}$ estimate by Caffarelli \cite{7}, Jian and Wang \cite{25} tells us
\begin{equation}\label{temp-22}
||D^2v||_{C^{\alpha}(B_{2R_1})}\leq C.
\end{equation}
This provides us the boundedness of Hessian matrix $D^2v$ and hence so is $D^2u$.
Similarly, for H\"older semi-norm, by definition we have
$$
\begin{array}{llll}
[D^2v]_{C^{\alpha}(\mathbb{R}^n)} &=&\displaystyle \sup_{x_1,x_2\in \mathbb{R}^n}\dfrac{|D^2v(x_1)-D^2v(x_2)|}{|x_1-x_2|^{\alpha}}\\
&\leq &\displaystyle
\max\left\{
\sup_{x_1,x_2\in B_{2R_1}
}\dfrac{|D^2v(x_1)-D^2v(x_2)|}{|x_1-x_2|^{\alpha}}\right.,\\
&&\displaystyle\left.
\sup_{
x_1\in B_{R_1}^c\atop
x_2\in B_{\frac{1}{4}|x_1|}(x_1)
}\dfrac{|D^2v(x_1)-D^2v(x_2)|}{|x_1-x_2|^{\alpha}}
,\sup_{
x_1\in B_{R_1}^c\atop
x_2\in B_{\frac{1}{4}|x_1|}(x_1)^c
}\dfrac{|D^2v(x_1)-D^2v(x_2)|}{|x_1-x_2|^{\alpha}}
\right\}.\\
\end{array}
$$
The first term bounded due to $C^{2,\alpha}$ estimate in (\ref{temp-22}). For the second term, note that
$$
\begin{array}{llllll}
\displaystyle\sup_{
x_1\in B_{R_1}^c\atop
x_2\in B_{\frac{1}{4}|x_1|}(x_1)
}\dfrac{|D^2v(x_1)-D^2v(x_2)|}{|x_1-x_2|^{\alpha}}
&\leq &\displaystyle\sup_{x\in B_{R_1}^c} [D^2v]_{C^{\alpha}(B_{\frac{1}{4}|x|}(x))}\\
&=& \displaystyle\sup_{x\in B_{R_1}^c}
\sup_{y_1,y_2\in B_{\frac{1}{4}|x|}(x)}
\dfrac{|D^2v(y_1)-D^2v(y_2)|}{|y_1-y_2|^{\alpha}}\\
&= & \displaystyle
\sup_{x\in B_{R_1}^c}
\sup_{z_1,z_2\in B_{1}}
\dfrac{|D^2v(x+\frac{|x|}{4}z_1)-D^2v(x+\frac{|x|}{4}z_2)|}{
(\frac{|x|}{4})^{\alpha}
\cdot|z_1-z_2|^{\alpha}}\\
&\leq &\displaystyle
\sup_{x\in B_{R_1}^c\atop R=|x|} [D^2v_R]_{C^{\alpha}(B_1)}\cdot (\frac{4}{R})^{\alpha}\\
&\leq & C,\\
\end{array}
$$
for some constant $C$ from (\ref{CalphaEstimate}). For the third term, due to Hessian matrix $D^2v$ is proved to be bounded, hence
$$
\sup_{
x_1\in B_{R_1}^c\atop
x_2\in B_{\frac{1}{4}|x_1|}(x_1)^c
}\dfrac{|D^2v(x_1)-D^2v(x_2)|}{|x_1-x_2|^{\alpha}}
\leq (\dfrac{4}{3R_1})^{\alpha}\cdot 2||D^2v||_{C^0(\mathbb{R}^n)}\leq C.
$$
Combine these three parts and since the linear transform $T$ from Lemma \ref{roughestimate} doesn't degenerate, we obtain (\ref{HolderRegularity_MA}) immediately.

\iffalse
Write the equation into linear form,
\begin{equation*}
a_{i j}^{R} \partial_{i j} v_{R}= n f_{1, R}, \quad\text{in}\quad B_{2},
\end{equation*}
where $a_{i j}^{R}=\operatorname{cof}_{i j}\left(D^{2} v_{R}\right)$. Then we learned that $a_{i j}^{R}$ is uniformly elliptic and
\begin{equation*}
\left\|a_{i j}^{R}\right\|_{C^{\alpha}\left(\overline{B}_{1.1}\right)} \leq C.
\end{equation*}
Schauder estimate of linear, uniformly elliptic equation tells us
\begin{equation*}
\left\|v_{R}\right\|_{C^{2, \alpha}\left(\overline{B}_{1}\right)} \leq C\left(\left\|v_{R}\right\|_{L^{\infty}\left(\overline{B}_{1.1}\right)}+n\left\|f_{1, R}\right\|_{C^{\alpha}\left(\overline{B}_{2}\right)}\right) \leq C.
\end{equation*}
%For any $e \in \mathbb{S}^{n-1},$ applying $\partial_{e}$ to both sides %of the  equation,  we have
%\begin{equation*}
%a_{i j}^{R} \partial_{i j}\left(\partial_{e} v_{R}\right)=\partial_{e} %f_{1, R}
%\end{equation*}
%since $a_{i j}^{R}, \partial_{e} v_{R}$ and $\partial_{e} f_{1, R}$ are %bounded in $C^{\alpha}$ norm, we have
%\begin{equation}\label{2.13}
%\left\|v_{R}\right\|_{C^{3, \alpha}\left(\overline{B}_{1}\right)} \leq %C,\quad\text{which implies}\quad
%\left\|a_{i j}^{R}\right\|_{C^{1, \alpha}\left(\overline{B}_{1}\right)} %\leq C.
%\end{equation}
\fi

Using Newton-Leibnitz formula between $\operatorname{det}D^{2} \overline{v}_{R}(y)=f_{1, R}(y)$ and $\operatorname{det}I=1$ gives
\begin{equation*}
\widetilde{a}_{i j} \partial_{i j} w_{R}=f_{1, R}(y)-1=O\left(R^{-\min\{\beta,\gamma\}}\right),
\end{equation*}
where $\widetilde{a}_{i j}(y)=\int_{0}^{1} \operatorname{cof}_{i j}\left(I+t D^{2} w_{R}(y)\right) d t$.

By (\ref{CalphaEstimate}), (\ref{2.9}) and using Landau-Kolmogorov interpolation inequality, since $v_R$ is bounded on $B_2$, there exists some constant $C$ independent of $R$ such that $\left\|v_{R}\right\|_{C^{2, \alpha}\left(\bar{B}_{1}\right)}\leq C$., the $C^{\alpha}$ norm of $D^2w_R$ is also bounded by some  constant $C$ independent of $R$, hence
\begin{equation*}
\frac{I}{C} \leq \widetilde{a}_{i j} \leq C I, \text { on } B_{1.1}, \quad\left\|\widetilde{a}_{i j}\right\|_{C^{ \alpha}\left(\overline{B}_{1.1}\right)} \leq C.
\end{equation*}
Thus Schauder's estimate (note that Theorem 6.2 of \cite{GT}  demands the coefficients have bounded H\"older norm, then the frozen coefficient method can be applied) gives us
\begin{equation*}
\left\|w_{R}\right\|_{C^{2, \alpha}\left(B_{1}\right)} \leq C\left(\left\|w_{R}\right\|_{L^{\infty}\left(\overline{B}_{1,1}\right)}+\left\|f_{1, R}-1\right\|_{C^{\alpha}\left(\overline{B}_{1}\right)}\right) \leq C R^{-\min\{\varepsilon,\beta,\gamma\}}.
\end{equation*}
Scale back to the original $B_{\frac{|x|}{4}}(x)$ ball, this is exactly our result (\ref{ConvergeRateofHessian}).
\end{proof}

\begin{remark}\label{corollary_estimate2}

The condition (\ref{Holder-Condition}) can be replaced by the following
 \begin{equation}\label{1-orderCondition}
  \exists\gamma>0\text{ such that }
  \limsup_{|x|\rightarrow\infty}|x|^{1+\gamma}
  \left| Df ( x ) \right| <\infty.
  \end{equation}

Condition (\ref{1-orderCondition}) demands $f$ has at least one order derivative while condition (\ref{Holder-Condition}) only demands the $C^{\alpha}$-semi norm of $f$ has a vanishing speed at infinity.

For example, similar to the example in Remark \ref{example}, we take $f(x):=e^{-|x|}\sin(e^{|x|})+1$. On the one hand,  $Df(x)$ doesn't admit a limit at infinity, hence $f$ doesn't satisfies condition (\ref{1-orderCondition}).

On the other hand, we calculate its $C^{\alpha}$ H\"older semi-norm directly
\begin{equation*}
\begin{array}{llllll}
[f]_{C^{\alpha}\left({B_{\frac{3}{2} R} \backslash B_{\frac{R}{2}}}\right)}&=&\displaystyle\sup _{z_{1}, z_{2} \in B_{\frac{3R}{2}}\setminus B_{\frac{R}{2}}} \frac{\left|f\left(z_{1}\right)-f\left(z_{2}\right)\right|}{\left|z_{1}-z_{2}\right|^{\alpha}}\\
& \leq &
\displaystyle\sup _{z_{1}, z_{2} \in B_{\frac{3R}{2}}\setminus B_{\frac{R}{2}}} \left( e^{-|z_2|}\frac{\left|\sin(e^{|z_1|})-\sin(e^{|z_2|})\right|}{\left|z_{1}-z_{2}\right|^{\alpha}}
+ \sin(e^{|z_1|}) \frac{\left|e^{-|z_1|}-e^{-|z_2|}\right|}{\left|z_{1}-z_{2}\right|^{\alpha}}\right)\\
&\leq C & \displaystyle\sup _{z_{1}, z_{2} \in B_{\frac{3R}{2}}\setminus B_{\frac{R}{2}}}
\left(
e^{-R}\cdot\dfrac{\max_{z\in B_{\frac{3R}{2}}\setminus B_{\frac{R}{2}}}|\cos(e^{|z|})|\cdot |z_1-z_2|}{|z_1-z_2|^{\alpha}}\right.\\
&+&\displaystyle\left. 1\cdot \dfrac{\sup_{z\in B_{\frac{3R}{2}}\setminus B_{\frac{R}{2}}}|e^{-|z|}|\cdot |z_1-z_2|}{|z_1-z_2|^{\alpha}}
\right)\\
&\leq & Ce^{-R}\cdot R^{1-\alpha},\\
\end{array}
\end{equation*}
for some constant $C$ independent of $R$.  Hence  $f$ satisfies condition (\ref{Holder-Condition}) for all $\alpha\in(0,1),\ \gamma>0$.
\end{remark}

The study of equation (\ref{equation_NONConstant}) is transformed into Monge-Amp\`ere type by taking Legendre transform as formula (\ref{LegendreTransform}).
We can easily verify as in Section \ref{Section-ConstantSituation} that  $\bolangxian{u}(\bolangxian{x})$ satisfies the following Monge-Amp\`ere type equation $$
\sum _ { i = 1 } ^ { n } \ln \bolangxian{\lambda_i}=\frac{2b}{\sqrt{a^2+1}}f(\dfrac{1}{2b}(\bolangxian{x}-D\bolangxian{u}(\bolangxian{x}))).
$$
This equation is equivalent (under the condition of $\bolangxian{\lambda_i}>0$) to
\begin{equation}\label{transformed}
\operatorname { det } D ^ { 2 } \widetilde { u } = \exp \left\{\frac{2b}{\sqrt{a^2+1}} f \left( \frac { 1 } { 2 b } ( \widetilde { x } - D \widetilde { u } ( \widetilde { x } ) ) \right) \right\}=:g(\bolangxian{x}).
\end{equation}

\begin{theorem}\label{LimitofHessian}
  Let $u \in C ^ {4 } \left( \mathbb { R } ^ { n } \right)$ be a classical solution of (\ref{equation_NONConstant})
and satisfy (\ref{boundedHessian}).
  Also, we assume that $f$ satisfies
   condition (\ref{0-orderCondition}), (\ref{1-orderCondition})
   for some $f(\infty)<0$ .
  Then there exist $A\in\mathcal{A}(f(\infty),-a+b), \alpha > 0 , C \left( n , f , \beta , \gamma \right),$ and $ R_2(n,f,\beta,\gamma)$ such that \begin{equation}\label{Result_LimitofHessian}
  \left| D ^ { 2 } u ( x ) - A \right| \leq \frac { C } { | x | ^ {\alpha} } , \quad \forall | x | \geq R _ {2}.
  \end{equation}
\end{theorem}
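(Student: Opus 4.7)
The plan is to reduce the assertion to Theorem \ref{corollary_estimate3} (together with Remark \ref{corollary_estimate2}) via the Legendre transform already set up in this section. First I would extend $u$ to all of $\mathbb{R}^n$ as a sufficiently smooth function still satisfying $D^2u>(-a+b)I$, set $\overline{u}(x):=u(x)+\frac{a+b}{2}|x|^2$ so that $D^2\overline{u}\geq 2bI$, and pass to the Legendre transform $(\widetilde{x},\widetilde{u})$ from (\ref{LegendreTransform}). The bounded-Hessian hypothesis (\ref{boundedHessian}) guarantees that $\overline{u}$ is globally uniformly convex with Lipschitz gradient, so $D\overline{u}$ is a bi-Lipschitz homeomorphism of $\mathbb{R}^n$; in particular, by (\ref{limitofX}) and the bound $D^2u\leq MI$, one gets $c|x|\leq|\widetilde{x}|\leq C|x|$ for $|x|$ large. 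Moreover $\widetilde{u}$ solves the Monge--Amp\`ere equation (\ref{transformed}) on all of $\mathbb{R}^n$, with right-hand side $g(\widetilde{x})=\exp\!\big(\tfrac{2b}{\sqrt{a^2+1}}f(x)\big)$.

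Next I would verify the hypotheses of Theorem \ref{corollary_estimate3} with Remark \ref{corollary_estimate2} for $g$. By (\ref{boundedHessian}) and (\ref{property-Legendre}), the eigenvalues $\widetilde{\lambda}_i(D^2\widetilde{u})$ lie in an interval $[c_1,c_2]\subset(0,1)$ uniformly, hence $g$ is uniformly positive and bounded with $g(\infty)=\exp\!\big(\tfrac{2b}{\sqrt{a^2+1}}f(\infty)\big)>0$. Combining the comparability $|x|\sim|\widetilde{x}|$ with (\ref{0-orderCondition}) and the mean value theorem produces $|g(\widetilde{x})-g(\infty)|\leq C/|\widetilde{x}|^\beta$. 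Applying the chain rule, $D_{\widetilde{x}}g(\widetilde{x})=\tfrac{2b}{\sqrt{a^2+1}}g(\widetilde{x})\,Df(x)\,D^2_{\widetilde{x}}v(\widetilde{x})$, where $D^2_{\widetilde{x}}v=(D^2u+(a+b)I)^{-1}$ is uniformly bounded; hypothesis (\ref{1-orderCondition}) then gives $|D_{\widetilde{x}}g(\widetilde{x})|\leq C/|\widetilde{x}|^{1+\gamma}$. After the linear rescaling $w:=g(\infty)^{-1/n}\widetilde{u}$ normalising the right-hand side's limit to $1$, Theorem \ref{corollary_estimate3} supplies $\widetilde{A}\in\mathtt{Sym}(n)$ with $\det\widetilde{A}=g(\infty)$, $\lambda_i(\widetilde{A})\in[c_1,c_2]$, and $|D^2\widetilde{u}(\widetilde{x})-\widetilde{A}|\leq C/|\widetilde{x}|^{\alpha'}$ for some $\alpha'>0$.

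Finally I would invert the Legendre transform via the relation $D^2u(x)=2b(I-D^2\widetilde{u}(\widetilde{x}))^{-1}-(a+b)I$. Because the eigenvalues of $\widetilde{A}$ are bounded away from $1$, $I-\widetilde{A}$ is uniformly invertible, and the resolvent identity $(I-D^2\widetilde{u})^{-1}-(I-\widetilde{A})^{-1}=(I-D^2\widetilde{u})^{-1}(D^2\widetilde{u}-\widetilde{A})(I-\widetilde{A})^{-1}$ converts the estimate for $\widetilde{u}$ into $|D^2u(x)-A|\leq C/|x|^\alpha$, where $A:=2b(I-\widetilde{A})^{-1}-(a+b)I$, using once more $|\widetilde{x}|\sim|x|$. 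The membership $A\in\mathcal{A}(f(\infty),-a+b)$ follows from inverting the correspondence $\widetilde{\lambda}_i=(\lambda_i+a-b)/(\lambda_i+a+b)$: positivity of $\lambda_i(\widetilde{A})$ yields $\lambda_i(A)>-a+b$, while $F_\tau(\lambda(A))=\tfrac{\sqrt{a^2+1}}{2b}\sum\ln\lambda_i(\widetilde{A})=f(\infty)$.

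The main obstacle I anticipate is the bookkeeping around the Legendre change of variables: verifying that the decay exponents $\beta>1$ and $\gamma>0$ on $f$ transfer faithfully to $g$ under the bi-Lipschitz comparability $|x|\leftrightarrow|\widetilde{x}|$, that the normalisation $f(\infty)=1$ in Theorem \ref{corollary_estimate3} can be harmlessly removed by the affine rescaling above, and that the extension of $u$ off the original domain can be arranged so that the Monge--Amp\`ere equation for $\widetilde{u}$ truly holds on all of $\mathbb{R}^n$ (not merely an exterior domain). The algebraic conversion back to $D^2u$ is routine once $\widetilde{A}$ is produced.
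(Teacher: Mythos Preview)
Your proposal is correct and follows essentially the same route as the paper: Legendre transform to a Monge--Amp\`ere equation, verify the decay hypotheses on the transformed right-hand side $g$ via the bi-Lipschitz comparability $|x|\sim|\widetilde{x}|$, apply Theorem~\ref{corollary_estimate3} with Remark~\ref{corollary_estimate2}, then invert. Two small points of bookkeeping to tighten: (i) the extension step is unnecessary here since the hypothesis already has $u\in C^4(\mathbb{R}^n)$ solving (\ref{equation_NONConstant}) on all of $\mathbb{R}^n$; (ii) the uniform lower bound $\widetilde{\lambda}_i\ge c_1>0$ does not follow from (\ref{boundedHessian}) and (\ref{property-Legendre}) alone (those only give $\widetilde{\lambda}_i>0$ and $\widetilde{\lambda}_i\le 1-\delta$), but from the equation together with $\inf_{\mathbb{R}^n}f>-\infty$, exactly as the paper argues via $\ln\widetilde{\lambda}_i\ge\tfrac{2b}{\sqrt{a^2+1}}\inf f$.
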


\begin{proof}[Proof.]
By condition (\ref{boundedHessian}), $-a+b<\lambda_i\leq M$ for $i=1,2,\cdots,n$, hence for some $\delta=\delta(M)>0$,
\begin{equation}\label{smalldelta}
\bolangxian{\lambda_i}=1-\dfrac{2b}{\lambda_i+a+b}\quad\text{satisfies}\quad 0<\bolangxian{\lambda_i}<1-\delta.
\end{equation}
Also note that $f\in C^0(\mathbb{R}^n)$ has a finite lower bound. Due to $\ln \bolangxian{\lambda_i}<0$ holds for all $i=1,2,\cdots,n$. Hence similar to the strategy used in \cite{Wang.Chong-paper}, we naturally have a lower bound such that for all $i=1,2,\cdots,n$,  $$
\ln\bolangxian{\lambda_i}>\frac{2 b}{\sqrt{a^{2}+1}}\inf_{\mathbb{R}^n}f>-\infty,\quad\text{i.e.}\quad
\bolangxian{\lambda_i}>\exp\{\frac{2 b}{\sqrt{a^{2}+1}}\inf_{\mathbb{R}^n}f\}>0.
$$
Combine these two results, we have $$
0<\delta<\bolangxian{\lambda_i}<1-\delta,
$$
for some $\delta=\delta(a,b,\inf_{\mathbb{R}^n}f,M)$.
Furthermore, we have a reversed direction of formula (\ref{limitofX}). By the definition of Legendre transform (\ref{LegendreTransform}), we have
\begin{equation*}
2b|x|=|\widetilde{x}-D \widetilde{u}(\widetilde{x})| \geq|\widetilde{x}|-|D \widetilde{u}(\widetilde{x})| \geq|\widetilde{x}|-(1-\delta)|\widetilde{x}|-|D \widetilde{u}(0)|,
\end{equation*}
i.e. \begin{equation}\label{limitofX2}
|x|\geq \frac{\delta}{2b}|\widetilde{x}|-\frac{1}{2b}|D\widetilde{u}(0)|.
\end{equation}
Combine formula (\ref{limitofX}) and (\ref{limitofX2}), we see that there exists some constant $C_0=C_0(\inf_{\mathbb{R}^n}f,a,b,M)$ such that
\begin{equation}\label{linear-of-X}
\frac{1}{C_0}|x|\leq |\widetilde{x}|\leq C_0|x|,\quad\text{for}\quad|x|\gg 1.
\end{equation}
This linear growth equivalence result (\ref{linear-of-X}) is the major reason that we demand the boundedness of Hessian.

We obtain the limit of Hessian for equation (\ref{transformed}) by Theorem \ref{corollary_estimate3} and Remark \ref{corollary_estimate2} first. Thus we need to verify the asymptotic behavior of $g(\bolangxian{x})$.

\textit{Step 1  Verify $g(\widetilde{x})$ satisfies condition (\ref{0-orderCondition})}

By the equivalence (\ref{linear-of-X}), $$
  \lim_{\bolangxian{x}\rightarrow\infty}g(\bolangxian{x})=\exp\{\frac{2 b}{\sqrt{a^{2}+1}}f(\infty)\}=:g(\infty)\in(0,1).
  $$
  Hence we have
  $$
  |\bolangxian{x}|^{\beta}|g(\bolangxian{x})-g(\infty)|
 = e^{\frac{2 b}{\sqrt{a^{2}+1}}}
  \dfrac{e^{f(\infty)}|\bolangxian{x}|^{\beta}}{\left|\frac{\bolangxian{x}-D\bolangxian{u}(\bolangxian{x})}{2b}\right|^{\beta}}
  \cdot\left|\frac{\bolangxian{x}-D\bolangxian{u}(\bolangxian{x})}{2b}\right|^{\beta}
  \cdot \left|
  e^{f(\frac{\bolangxian{x}-D\bolangxian{u}(\bolangxian{x})}{2b})-f(\infty)}-1
  \right|.
  $$
  Note that $f$ is a bounded function and admits a limit $f(\infty)$ at infinity, together with formula (\ref{linear-of-X}) tell us $f(\frac{\bolangxian{x}-D\bolangxian{u}(\bolangxian{x})}{2b})-f(\infty)$ is less than 1 for sufficiently large $|\bolangxian{x}|$.

  Due to $|e^t-1|\leq e|t|$ as long as $|t|\leq 1$, by $\widetilde{x}-D\widetilde{u}(\widetilde{x})=2bx$ and equivalence (\ref{linear-of-X}), we have$$
  \limsup_{|\bolangxian{x}|\rightarrow\infty}
  |\bolangxian{x}|^{\beta}|g(\bolangxian{x})-g(\infty)|\leq C_0^{\beta}
  e^{\frac{2 b}{\sqrt{a^{2}+1}}+f(\infty)+1}
  \limsup_{|x|\rightarrow\infty}
  |x|^{\beta}\left|f(x)-f(\infty)\right|
  <\infty.
  $$

\textit{Step 2 Verify $g(\widetilde{x})$ satisfies condition (\ref{1-orderCondition})}

  By taking derivative once, we have $$
  Dg(\bolangxian{x})=\exp\{\frac{2 b}{\sqrt{a^{2}+1}}f\}\cdot Df(\dfrac{1}{2b}(\bolangxian{x}-D\bolangxian{u}(\bolangxian{x})))
  \cdot\dfrac{1}{2b}[I-D^2\bolangxian{u}].
  $$

  Due to $D^2\widetilde{u}$  and $\exp\{\frac{2 b}{\sqrt{a^{2}+1}}f\}$ are bounded (for sufficiently large $\bolangxian{x}$), hence we only need to consider the rest part of $Dg(\widetilde{x})$.

  By equivalence (\ref{linear-of-X})  we obtain $$
|\bolangxian{x}|^{\gamma+1}
  \left|Df(\dfrac{1}{2b}(\bolangxian{x}-D\bolangxian{u}(\bolangxian{x})))\right|
  \leq C_0^{\gamma+1}
  |x|^{\gamma+1}\cdot |Df(x)|.
  $$
  Take the limit $|\widetilde{x}|\rightarrow\infty$ and condition (\ref{1-orderCondition}) gives us $$
  \limsup_{|\widetilde{x}|\rightarrow\infty}|\bolangxian{x}|^{\gamma+1}
  \left||Df(\dfrac{1}{2b}(\bolangxian{x}-D\bolangxian{u}(\bolangxian{x})))\right|
  \leq C_0^{\gamma+1}
  \limsup_{|x|\rightarrow\infty}|x|^{\gamma+1} \cdot|D f(x)|<\infty.
  $$

By Theorem \ref{corollary_estimate3} and Remark \ref{corollary_estimate2} , we have $$
\left| D ^ { 2 } \bolangxian{u} ( \bolangxian{x} ) - \bolangxian{A} \right| \leq \frac { C } { | \bolangxian{x} | ^ { \min \{ \beta , \varepsilon,\gamma\} } } , \quad \forall | \bolangxian{x} | \geq R _ { 1 }
$$
for some $\bolangxian{A} \in \bolangxian{\mathcal {A }} : = \left\{ \bolangxian{A} \in \operatorname { Sym } ( n ) :
 \det \bolangxian{A}=g(\infty),\ \bolangxian{A}>0
 \right\}$ and $\varepsilon > 0 , C ( n , f , \beta , \gamma ) , R _ { 1 } ( n , f , \beta , \gamma )$.

Now we prove that from the definition  of Legendre transform (\ref{LegendreTransform}) and the equivalence (\ref{linear-of-X}), there exists an $A\in\mathcal{A}(f(\infty),-a+b)$
such that (\ref{Result_LimitofHessian}) holds with
 $\alpha=\min\{\beta,\varepsilon,\gamma\}>0$.

By strip argument as in the proof of Theorem \ref{ConstantRHS1}, $D^2\bolangxian{u}, \bolangxian{A}$ are away from the origin and identity matrix (by a positive distance $\delta$).
Take $$
A:=\left( \frac { 1 } { 2 b } \left( I - \bolangxian{A} \right) \right) ^ { - 1 } - ( a + b ) I,
$$
which satisfies $F_{\tau}(A)=f(\infty),$
 we obtain $$
 \begin{array}{llll}
\left| D ^ { 2 } u - A \right|& =&2b\left|
\left( I - D ^ { 2 } \widetilde { u } ( \widetilde { x } )\right) ^ { - 1 }-
\left( I - \bolangxian{A} \right) ^ { - 1 }
\right|\\
&\leq &C_{\delta}|D^2\bolangxian{u}(\bolangxian{x})-\bolangxian{A}|\\
&
\leq& \dfrac{C}{|\bolangxian{x}|^{\min\{\beta,\varepsilon,\gamma\}}}.\\
\end{array}
$$
By the equivalence (\ref{linear-of-X}), for the same $\alpha>0$ as above,
we have formula (\ref{Result_LimitofHessian}).
\end{proof}

Moreover, we have not only  the limit of Hessian at infinity, but also the $C^{\alpha}$ bound of Hessian.
\begin{theorem}\label{HolderRegularity_ofHessian}
  Under the conditions as in Theorem \ref{LimitofHessian}, there exists $C=C(n,f,\beta,\gamma,\alpha,a,b,M)$ such that $$
  ||D^2u||_{C^{\alpha}(\mathbb{R}^n)}\leq C.
  $$
\end{theorem}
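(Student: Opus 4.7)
The plan is to reduce the Hölder regularity of $D^2u$ on $\mathbb{R}^n$ to the corresponding estimate for the Hessian of the Legendre-transformed function $\widetilde{u}$, which already satisfies the hypotheses of Theorem \ref{corollary_estimate3}.

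First, I would apply the estimate (\ref{HolderRegularity_MA}) of Theorem \ref{corollary_estimate3} to $\widetilde{u}$, regarded as a convex solution of the Monge--Ampère type equation (\ref{transformed}) with right-hand side $g(\widetilde{x})$. The two steps in the proof of Theorem \ref{LimitofHessian} show that $g$ satisfies conditions (\ref{0-orderCondition}) and (\ref{1-orderCondition}); moreover, since $f\in C^2(\mathbb{R}^n)$ (hence at least $C^\alpha_{loc}$) and $D^2\widetilde{u}$ is bounded on $\mathbb{R}^n$ by (\ref{smalldelta}), the chain rule gives $g\in C^\alpha(\mathbb{R}^n)$ with locally uniform Hölder norm. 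Thus Theorem \ref{corollary_estimate3} provides a constant $C$ such that
\begin{equation*}
\|D^2\widetilde{u}\|_{C^\alpha(\mathbb{R}^n)}\leq C.
\end{equation*}

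Next, I would transfer this estimate back to $u$ through the two algebraic relations coming from the Legendre transform. On one hand, from the identity
\begin{equation*}
D^2u(x)=2b\bigl(I-D^2\widetilde{u}(\widetilde{x})\bigr)^{-1}-(a+b)I,
\end{equation*}
together with the uniform two-sided bound $0<\delta<\widetilde{\lambda}_i(D^2\widetilde{u})<1-\delta$ established in (\ref{smalldelta}), the map $M\mapsto 2b(I-M)^{-1}-(a+b)I$ is smooth on the relevant matrix domain, hence Lipschitz with a constant $C_\delta$ depending only on $\delta, a, b$. On the other hand, since $D^2\bar u=D^2u+(a+b)I\leq (M+a+b)I$ by (\ref{boundedHessian}), the map $x\mapsto \widetilde{x}=D\bar u(x)$ is Lipschitz with constant $M+a+b$. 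Composing the two:
\begin{equation*}
\bigl|D^2u(x_1)-D^2u(x_2)\bigr|
\leq C_\delta\,[D^2\widetilde{u}]_{C^\alpha}\,|\widetilde{x}_1-\widetilde{x}_2|^\alpha
\leq C_\delta\,C\,(M+a+b)^\alpha\,|x_1-x_2|^\alpha,
\end{equation*}
which combined with the $L^\infty$ bound on $D^2u$ yields the claimed uniform $C^\alpha$ estimate on $\mathbb{R}^n$.

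The main obstacle I expect is the careful verification that all constants involved are genuinely uniform in $\widetilde{x}$ (equivalently, in $x$): the Lipschitz constant of $M\mapsto 2b(I-M)^{-1}-(a+b)I$ is only controlled once the eigenvalues of $D^2\widetilde{u}$ are bounded strictly inside $(0,1)$ by a distance $\delta$ independent of $\widetilde{x}$, and the Lipschitz constant of $x\mapsto \widetilde{x}$ requires the global upper bound $D^2u\leq MI$. Both were already secured in the proof of Theorem \ref{LimitofHessian} using (\ref{boundedHessian}) and the lower bound $\inf_{\mathbb{R}^n}f>-\infty$, so no new ingredient beyond those bounds and the already-established $C^\alpha$ regularity of $D^2\widetilde{u}$ is required.
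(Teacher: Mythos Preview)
Your proposal is correct and follows essentially the same approach as the paper: apply Theorem \ref{corollary_estimate3} to obtain $\|D^2\widetilde{u}\|_{C^\alpha(\mathbb{R}^n)}\leq C$, then transfer this back to $D^2u$ via the Lipschitz dependence of $M\mapsto 2b(I-M)^{-1}-(a+b)I$ on the domain $\delta I\leq M\leq (1-\delta)I$ together with the bi-Lipschitz nature of the coordinate change $x\mapsto\widetilde{x}$. The paper phrases the last step slightly differently, invoking the equivalence (\ref{linear-of-X}) and the two-sided inequality (\ref{equivalentHessian}), but the content is identical to your argument.
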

\begin{proof}[Proof.]
  Again, we apply Legendre transform (\ref{LegendreTransform}) as in Theorem \ref{LimitofHessian} to obtain (\ref{transformed}). By Theorem \ref{corollary_estimate3}, there exists some constant C relying on $\bolangxian{u}$ such that $$
  ||D_{\bolangxian{x}}^2\bolangxian{u}||_{C^{\alpha}(\mathbb{R}^n)}\leq C.
  $$

  Now we transform back this result to $D^2u$. From formula (\ref{LegendreTransform}) it follows that
  \begin{equation*}
D^{2} u(x)=\left(\frac{1}{2 b}\left(I-D_{\widetilde{x}}^{2} \widetilde{u}(\widetilde{x})\right)\right)^{-1}-(a+b) I
\end{equation*}
and hence for any $x,y\in\mathbb{R}^n$,
\begin{equation*}
\left|D^{2} u(x)-D^{2} u(y)\right|=2 b\left|\left(I-D_{\widetilde{x}}^{2} \widetilde{u}(\widetilde{x})\right)^{-1}-\left(I-D_{\widetilde{x}}^{2} \widetilde{u}(\widetilde{y})\right)^{-1}\right|.
\end{equation*}
As in the argument in Theorem \ref{LimitofHessian}, there exists some  $0<\delta$ such that
\begin{equation*}
0<\delta I \leq D_{\widetilde{x}}^{2} \widetilde{u}(\widetilde{x}) \leq(1-\delta) I.
\end{equation*}
Thus it follows that $\exists C_{1}=C_{1}(n, \delta)>0, \quad C_{2}=C_{2}(n, \delta)>0$ such that
\begin{equation}\label{equivalentHessian}
2 b \cdot C_{1}(\delta) \cdot\left|D_{\widetilde{x}}^{2} \widetilde{u}(\widetilde{x})-D_{\widetilde{x}}^{2} \widetilde{u}(\widetilde{y})\right| \geq\left|D^{2} u(x)-D^{2} u(y)\right| \geq 2 b \cdot C_{2}(\delta) \cdot\left|D_{\widetilde{x}}^{2} \widetilde{u}(\widetilde{x})-D_{\widetilde{x}}^{2} \widetilde{u}(\widetilde{y})\right|
\end{equation}

Combine formula (\ref{equivalentHessian}) and the equivalence (\ref{linear-of-X}), we see that $D^2u$ has bounded $C^{\alpha}$ semi-norm if and only if $D^2\bolangxian{u}$ has bounded $C^{\alpha}$ semi-norm.
\end{proof}

Due to the important normalization lemma of John-Cordoba and Gallegos (see \cite{11}) cannot be applied to classical special Lagrangian equation without changing the operator , the level set method developed in \cite{CL} cannot be applied easily to $\tau>\frac{\pi}{4}$ situation.

\section{Analysis of Linearized Equation}\label{AnalyzeLinearEquation}

Now that we have obtained the limit of Hessian at infinity and it converge with a H\"older decay speed, it is time to capture the linear and constant part of the solution. In order to do this, we follow the line of Li-Li-Yuan \cite{ExteriorLiouville} and analyze the linearized equation of (\ref{equation_NONConstant}).

By the characterise result of ellipticity and concavity structure of $F(\lambda(D^2u))$ type equation in \cite{CNS}, equation (\ref{equation_NONConstant})  is uniformly elliptic and concave.
For any direction $e\in\partial B_1$, we apply $\partial_e,\ \partial_{ee}$ to the equation $F_{\tau}(\lambda)=f(x)$, then we have,
  \begin{equation}\label{linearized-equation}
   D_ { M _ { i j } } F_{\tau}\left( D ^ { 2 }u \right) D _ { i j } \left( u _ { e } \right) = f _ { e } ( x ) , \quad \text { and } \quad D_ { M _ { i j } } F_{\tau}\left( D ^ { 2 } u \right) D _ { i j } \left( u _ { e e } \right) \geq f _ { e e } ( x ),
  \end{equation}
where $D_{M_{ij}}F_{\tau}(D^2u)$ stands for the value of partial derivative of $F_{\tau}(M)$ with respect to the $i$-line, $j$-column position of $M$ at $D^2u(x)$.

The major difference between the linearized equation we obtained here and the one in Li-Li-Yuan \cite{ExteriorLiouville} is that our equation is non-homogeneous. We take advantage of the linearity and use Green's function to transform the non-homogeneous equation into homogeneous equation. Then by studying the decay speed of solution of non-homogeneous equation and applying the theory in \cite{ExteriorLiouville} we obtain corresponding results as in Theorem \ref{exteriorLiouville} and Corollary \ref{ExteriorLiouville_NonPositive}.

We consider the following Dirichlet Problem
  \begin{equation}\label{Dirichlet}
  \left\{ \begin{array} { l l } {Lw:= a _ { i j } ( x ) D _ { i j } w ( x ) = f ( x ) ,} & { \text { in } \mathbb { R } ^ { n } ,} \\ { \lim _ { | x | \rightarrow \infty } w ( x ) = 0 ,} & { \text { as } | x | \rightarrow \infty ,} \end{array} \right.
  \end{equation}
 where the coefficients are $C^2$ (which is provided by $u\in C^4$), satisfy  \begin{equation}\label{HolderCoefficient}
  ||a_{ij}||_{C^{\alpha}(\mathbb{R}^n)}\leq M,
  \end{equation}
  for some $\alpha>0,M<\infty$,
strictly elliptic for some $\gamma>0$, \begin{equation}\label{EllipticCefficient}
a _ { i j } ( x ) \xi _ { i } \xi _ { j } \geq \gamma|\xi|^2, \quad \text { for all } \quad x , \xi \in \mathbb { R } ^ { n },
  \end{equation}
  and for some symmetric matrix $a_{ij}(\infty)$ , $\varepsilon_0>0,\ C<\infty$, \begin{equation}\label{short-RangeCoefficient}
  |a_{ij}(x)-a_{ij}(\infty)|\leq C|x|^{-\varepsilon_0}.
  \end{equation}

By using the  criterion in \cite{Equivalence} together with the Theorem 2.2 of \cite{ExteriorLiouville}, we see that under these conditions, the Green's function of operator $L$ is equivalent to the Green's function of Laplacian. More precisely, let $G_L(x,y)$ be the Green's function centered at $y$ ,   there exists constant C such that
\begin{equation*}
C^{-1}|x-y|^{2-n} \leq G_{L}(x, y) \leq C|x-y|^{2-n},\quad\forall x\not=y,
\end{equation*}
\begin{equation}\label{Equivalence-Green}
\left|\partial_{x_{i}} G_{L}(x, y)\right| \leq C|x-y|^{1-n}, \quad i=1, \cdots, n,\quad\forall x\not=y,
\end{equation}
\begin{equation*}
\left|\partial_{x_{i}} \partial_{x_{j}} G_{L}(x, y)\right| \leq C|x-y|^{-n}, \quad i, j=1, \cdots, n,\quad \forall x\not=y.
\end{equation*}

Now we study the existence result of Dirichlet problem (\ref{Dirichlet})  and study its asymptotic behavior at infinity.
For the weak solution $u$ of a linear elliptic equation $a_{ij}(x)D_{ij}u(x)=f(x)$ in $\mathbb{R}^n$ hereinafter, we mean that for any $\varphi\in C_0^{\infty}(\mathbb{R}^n)$, $u\in W^{1,1}_{loc}(\mathbb{R}^n)$ satisfies $$
\int_{\mathbb{R}^n}D_i(a_{ij}(x)\varphi(x))D_ju(x)+f(x)\varphi(x)\d x=0.
$$
For the distribution solution $u$ of a linear elliptic equation $a_{ij}(x)D_{ij}u(x)=f(x)$ in $\mathbb{R}^n$ hereinafter, we mean that for any $\varphi\in\mathcal{S}(\mathbb{R}^n)$,
$$
\<a_{ij}(x)D_{ij}u(x),\varphi\>=\<f,\varphi\>,
$$
where
$$
\<a_{ij}(x)D_{ij}u(x),\varphi\>:=\int_{\mathbb{R}^n}
u(x)D_{ij}(a_{ij}(x)\varphi(x))\d x,\text{ and }
\<f,\varphi\>:=\int_{\mathbb{R}^n}f(x)\varphi(x)\d x.
$$
We can easily see from $C_0^{\infty}(\mathbb{R}^n)\subset\mathcal{S}(\mathbb{R}^n)$ that if $u$ is a $W^{1,1}_{loc}$ distribution solution, then it is also a weak solution.

\begin{lemma}\label{existence}
  Assume in addition that $f\in C^0(\mathbb{R}^n)$ satisfies for some $k\geq 2, \ 0<\varepsilon\ll1$,
  \begin{equation}
  \label{decayoff_1}
\limsup_{|x|\rightarrow+\infty} |x|^{k+\varepsilon}|f(x)|<\infty.
  \end{equation}
  Then there exists a weak solution $u\in C^1(\mathbb{R}^n)$ to the Dirichlet problem (\ref{Dirichlet}) and satisfies $$
  |u(x)|\leq C|x|^{2-k-\varepsilon},
  $$
  for some constant $C$.
\end{lemma}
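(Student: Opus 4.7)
The plan is to construct the solution explicitly as the Green's function convolution
\[u(x) \;:=\; -\int_{\mathbb{R}^n} G_L(x,y)\, f(y)\, dy,\]
using the two-sided equivalence (\ref{Equivalence-Green}) with the Newton kernel. The decay $|f(y)| \leq C\langle y\rangle^{-(k+\varepsilon)}$ with $k \geq 2$ (so $k+\varepsilon > 2$), combined with $G_L(x,y) \sim |x-y|^{2-n}$ and local integrability of $|x-y|^{2-n}$ for $n \geq 3$, gives absolute convergence of the integral at each $x$. The gradient estimate $|\partial_{x_i} G_L(x,y)| \leq C|x-y|^{1-n}$ from the same set of bounds allows differentiation under the integral sign, producing $u \in C^1(\mathbb{R}^n)$.

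For the pointwise decay, I fix $|x|$ large and split $\mathbb{R}^n$ into three standard pieces: the origin region $A_1 := \{|y| \leq |x|/2\}$, the near region $A_2 := \{|y-x| \leq |x|/2\}$, and the far region $A_3 := \{|y| \geq 3|x|/2\}$. On $A_1$, $|x-y| \geq |x|/2$ and the integral is bounded by $C|x|^{2-n} \int_{A_1}|f|\,dy$, which via $\int_{1 \leq |y| \leq |x|/2} |y|^{-(k+\varepsilon)}\,dy$ yields at most $C|x|^{2-k-\varepsilon}$ (thanks to $\varepsilon$ small so $k+\varepsilon \leq n$, the regime in which the lemma will be applied). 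On $A_2$ one uses $|f(y)| \leq C|x|^{-(k+\varepsilon)}$ together with $\int_{|y-x| \leq |x|/2} |x-y|^{2-n}\,dy \leq C|x|^2$, again giving $C|x|^{2-k-\varepsilon}$. On $A_3$, $|x-y| \sim |y|$, so the integrand is bounded by $C|y|^{2-n-k-\varepsilon}$; since $k+\varepsilon > 2$, this tail integrates to $C|x|^{2-k-\varepsilon}$. Summing the three contributions yields the stated decay, from which $u(x) \to 0$ at infinity follows immediately, since $2 - k - \varepsilon < 0$.

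To promote the representation formula to a weak solution, I would approximate by compactly supported data. Set $f_R := f\,\chi_{B_R}$ and $u_R(x) := -\int_{\mathbb{R}^n} G_L(x,y)\, f_R(y)\,dy$. Since $f_R$ has compact support and $a_{ij}$ satisfies (\ref{HolderCoefficient})--(\ref{EllipticCefficient}), classical Schauder theory combined with the defining property of $G_L$ gives $u_R \in C^{2,\alpha}_{\mathrm{loc}}(\mathbb{R}^n)$ satisfying $Lu_R = f_R$ classically, together with $u_R(x) \to 0$ at infinity. The three-region estimate applied uniformly in $R$ gives $u_R \to u$ locally in $C^1$, so in the limit $u$ is a weak solution of $Lu = f$ with the claimed decay.

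The main obstacle is the second step: balancing the three regions to extract the sharp rate $|x|^{2-k-\varepsilon}$ rather than the weaker dimension-dictated rate $|x|^{2-n}$ that the Green's function alone would provide. One must verify carefully that the hypothesis ``$\varepsilon$ small'' keeps us in $k+\varepsilon \leq n$ in the intended applications (in particular $k=2$, $n \geq 3$, which is the setting used later in the paper). The remaining ingredients---convergence of the Green's function convolution, $C^1$ regularity via the gradient bound (\ref{Equivalence-Green}), and the approximation argument for weak solvability---are standard once the kernel estimates are in hand.
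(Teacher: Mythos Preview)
Your approach is essentially the paper's: define $u=G_L*f$ and split the integral into near-origin, near-$x$, and far regions to extract the decay $|x|^{2-k-\varepsilon}$. Two small points deserve attention. First, your regions $A_1\cup A_2\cup A_3$ do not cover $\mathbb{R}^n$: the set $\{|x|/2<|y|<3|x|/2\}\setminus\{|y-x|\leq |x|/2\}$ is missed (take $y=-x$). The paper simply takes the third region to be the complement of the first two and then splits it further according to whether $|x-y|\geq|y|$; on the missing annulus both $|y|$ and $|x-y|$ are comparable to $|x|$, so the estimate goes through once you include it. Second, your remark that the lemma is applied with $k=2$ is not accurate: in Theorems~\ref{exteriorLiouville} and Corollary~\ref{ExteriorLiouville_NonPositive} it is invoked with $k=n$, so $k+\varepsilon>n$, and the $A_1$-contribution then yields only $O(|x|^{2-n})$ rather than $O(|x|^{2-k-\varepsilon})$; this is harmless for those applications, but your justification of the sharp exponent needs the caveat $k+\varepsilon\leq n$ stated honestly. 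Your approximation argument $f_R=f\chi_{B_R}$ for weak solvability is a clean alternative to the paper's appeal to Calder\'on--Zygmund and Sobolev embedding.
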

\begin{proof}[Proof.]
 By the definition of Green's function, the equivalence result  and potential theory (or Calder\'on-Zygmund inequality),  the following convolution $$
G_L*f(x):=\int_{\mathbb{R}^n}G_L(x,y)f(y)\d y
$$ belongs to $W^{2,p}(\mathbb{R}^n)$ for sufficiently large $p>\frac{n}{k}$ and is a distribution solution of $L[w](x)=f(x)$ (See for example \cite{Adams,Ziemer}). Hence by the embedding theory, it is also a weak solution.  Now we only need to verify that it vanishes with the desired speed at infinity.
Following the line of \cite{BLZ} (Lemma 2.2, formula (2.21)), we find out that as long as $f(y)$ satisfies (\ref{decayoff_1})
, we have $$
  |G_{L}*f(x)|\leq G_{L}*[C|x|^{-k-\varepsilon}]\rightarrow0,\quad\text{as}\quad |x|\rightarrow\infty.
  $$

  In fact, this is proved in a standard way by separating the integral domain into the following three part $$
\begin{array} { l } { E _ { 1 } : = \left\{ y \in \mathbb { R } ^ { n } , \quad | y | \leq | x | / 2  \right\} ,} \\ { E _ { 2 } : = \left\{ y \in \mathbb { R } ^ { n }  , \quad | y - x | \leq | x | / 2  \right\}, } \\ { E _ { 3 }: =  \mathbb { R } ^ { n }  \backslash \left( E _ { 1 } \cup E _ { 2 } \right) .} \end{array}
$$
Thus
$$
|G_L*f(x)| \leq C\int_{E_1\cup E_2\cup E_3}\dfrac{1}{|x-y|^{n-2}}\cdot \dfrac{1}{|y|^{k+\varepsilon}}\d y.
$$

By our choice of $E_i$, it   follows immediately that $$
\int_{E_1}\dfrac{1}{|x-y|^{n-2}\cdot|y|^{k+\varepsilon}}\d y
\leq \int_{B_{\frac{|x|}{2}}}\dfrac{1}{|y|^{k+\varepsilon}}\d y\cdot \dfrac{1}{|\frac{|x|}{2}|^{n-2}},
$$
$$
=C_{n,k,\varepsilon}\cdot |x|^{n-k-\varepsilon}\cdot\dfrac{1}{|x|^{n-2}}=
O(|x|^{2-k-\varepsilon})
,\quad\text{as}\quad |x|\rightarrow\infty.
$$

Similarly, note that in $E_2$ case, $|y-x|\leq \frac{|x|}{2}\leq |y|$  we have
\begin{equation*}
  \int_{E_2}\dfrac{1}{|x-y|^{n-2}\cdot|y|^{k+\varepsilon}}\d y
  \leq C_k
  \int_{|x-y|\leq \frac{|x|}{2}}\dfrac{1}{|x-y|^{n-2+\varepsilon}}\d y
  \cdot \dfrac{1}{|x|^{k}},
\end{equation*}
 $$
\leq
\int_0^{\frac{|x|}{2}}\dfrac{1}{r^{\varepsilon-1}}\d r\cdot \dfrac{1}{|x|^k}=O(|x|^{-k-\varepsilon})
,\quad\text{as}\quad |x|\rightarrow\infty.
$$
Now we separate $E_3$ into two parts
$$
E_3^+:=\{y\in E_3:|x-y|\geq |y|\},\ E_3^-:=E_3\setminus E_3^+.
$$
Then
$$
\int_{E_3^+}\dfrac{1}{|x-y|^{n-2}\cdot|y|^{k+\varepsilon}}\d y
\leq \int_{|y|\geq \frac{|x|}{2}}
\dfrac{1}{|y|^{n+k+\varepsilon}}\d y
=O( |x|^{-k-\varepsilon}),\quad\text{as}\quad |x|\rightarrow\infty,
$$
and
$$
\int_{E_3^-}\dfrac{1}{|x-y|^{n-2}\cdot|y|^{k+\varepsilon}}\d y
\leq \int_{|y-x|\geq \frac{|x|}{2}}
\dfrac{1}{|y-x|^{n+k+\varepsilon}}\d y
=O( |x|^{-k-\varepsilon}),\quad\text{as}\quad |x|\rightarrow\infty.
$$

\color{black}
 Hence we immediately obtain  $$
 |w(x)|=|G_L*f(x)|\leq C|x|^{-k-\varepsilon},\quad\text{as}\quad |x|\rightarrow\infty.
 $$
Thus $w(x):=G_L*f(x)$ solves the Dirichlet problem (\ref{Dirichlet}) with $O(|x|^{2-k-\varepsilon})$ order vanishing at infinity.
\end{proof}

Combine Lemma \ref{existence} with the exterior Liouville theorem for homogeneous equation proved by Li-Li-Yuan \cite{ExteriorLiouville}, we have the exterior asymptotic behavior theory for a class of non-homogeneous equations. For reading simplicity, we recall the Theorem 2.2 of \cite{ExteriorLiouville} as the following.
\begin{theorem}\label{exteriorLiouville-ORI}
  Let $v$ be a positive classical solution of $
  a_{ij}(x)D_{ij}v(x)=0\quad\text{in}\quad\mathbb{R}^n\setminus\overline{B_1}
  $
, then there exists a constant $v_{\infty}\geq 0$ such that
  \begin{equation*}
v(x)=v_{\infty}+o\left(|x|^{2-n+\delta}\right) \quad \text { as }|x| \rightarrow \infty, \text { for all } \delta>0.
\end{equation*}
  Moreover, if $a_{ij}(x)$ satisfies condition
  (\ref{short-RangeCoefficient}),
  then \begin{equation}\label{Result_ExteriorLiouville}
  v ( x ) = v _ { \infty } + O \left( |x|^{2-n} \right)\quad \text{as}\quad | x | \rightarrow \infty.
  \end{equation}
\end{theorem}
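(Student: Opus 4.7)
The plan is to combine the Krylov-Safonov Harnack inequality with a barrier comparison based on the Green's function equivalence (\ref{Equivalence-Green}) already established in the excerpt. First I would show that $v$ is bounded on $\mathbb{R}^n\setminus\overline{B_1}$. Since $L=a_{ij}\partial_{ij}$ is uniformly elliptic and $v>0$, the Krylov-Safonov Harnack inequality on dyadic annuli $A_R:=B_{2R}\setminus B_{R/2}$ gives $\max_{A_R}v\leq C\min_{A_R}v$ with $C$ independent of $R$. A barrier built from $G_L(x,0)$, which behaves like $|x|^{2-n}$ and decays to zero at infinity, combined with Harnack rescaling, rules out unbounded growth and yields a uniform bound $\|v\|_{L^\infty}\leq M$.

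Set $v_\infty:=\liminf_{|x|\to\infty}v(x)\in[0,M]$. Applying Harnack alternately to the non-negative $L$-harmonic functions $v-v_\infty+\varepsilon$ and $M-v+\varepsilon$ on $A_R$ produces the standard De Giorgi-Nash-Moser oscillation decay $\operatorname{osc}_{A_R}v\leq\theta\,\operatorname{osc}_{A_{2R}}v$ with $\theta\in(0,1)$ depending only on $n$ and $\gamma$. Iteration gives an initial polynomial rate $\operatorname{osc}_{A_R}v\leq CR^{-\alpha_0}$ and in particular $v(x)\to v_\infty$. To improve the exponent up to $n-2-\delta$ for arbitrary $\delta>0$, I would bootstrap by representing $v-v_\infty$ on $B_R\setminus\overline{B_1}$ via the Green's function $G_L$ and boundary data, then use $|G_L(x,y)|\leq C|x-y|^{2-n}$ from (\ref{Equivalence-Green}) together with the increasingly fast decay of the boundary values to iterate arbitrarily close to the Green-function rate.

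For the sharp $O(|x|^{2-n})$ statement under the short-range assumption (\ref{short-RangeCoefficient}), I would compare against the fundamental solution of the frozen-coefficient operator $L_\infty:=a_{ij}(\infty)\partial_{ij}$. A fundamental solution $\Gamma_\infty(x)\asymp|x|^{2-n}$ is explicitly computable from the constant matrix $a_{ij}(\infty)$, and $L\Gamma_\infty=(a_{ij}(x)-a_{ij}(\infty))\partial_{ij}\Gamma_\infty=O(|x|^{-n-\varepsilon_0})$ by (\ref{short-RangeCoefficient}). Solving the auxiliary Dirichlet problem $Lh=L\Gamma_\infty$ with $h(x)\to 0$, which is admissible via Lemma \ref{existence} because the right-hand side decays strictly faster than $|x|^{-n}$, and then comparing $v-v_\infty$ against $\pm K(\Gamma_\infty-h)$ outside a large ball via the maximum principle yields $|v(x)-v_\infty|\leq C|x|^{2-n}$.

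The main obstacle will be the initial boundedness and the identification of $v_\infty$: bare Harnack does not preclude slow divergence of $v$ at infinity, so the barrier argument with $G_L$ must genuinely exploit that $G_L$ vanishes at infinity at the critical rate $|x|^{2-n}$, a property secured by the Green's-function equivalence rather than by bare ellipticity. One must then verify that $v_\infty=\liminf v$ is attained as a full, directionally uniform limit and, in the second part, that the barrier constant $K$ can be chosen independent of the truncation radius so that the maximum principle comparison closes uniformly as the outer boundary is sent to infinity.
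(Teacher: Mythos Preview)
The paper does not prove this theorem at all: it is recalled verbatim as Theorem 2.2 of Li--Li--Yuan \cite{ExteriorLiouville} and used as a black box to derive the non-homogeneous version (Theorem~\ref{exteriorLiouville}). So there is no in-paper argument to compare your proposal against.

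That said, your sketch is in the standard spirit (Harnack to get boundedness and a limit, then barrier comparison for the rate), and is close to how such results are typically proved. One point to watch: the Green's function equivalence (\ref{Equivalence-Green}) as formulated in this paper explicitly invokes both the short-range hypothesis (\ref{short-RangeCoefficient}) \emph{and} Theorem~\ref{exteriorLiouville-ORI} itself (``By using the criterion in \cite{Equivalence} together with the Theorem 2.2 of \cite{ExteriorLiouville}''). Hence appealing to (\ref{Equivalence-Green}) in your proof of the first assertion---where (\ref{short-RangeCoefficient}) is not assumed---would be circular as written. The fix is easy: the two-sided size bound $C^{-1}|x-y|^{2-n}\le G_L(x,y)\le C|x-y|^{2-n}$ already follows from uniform ellipticity and bounded measurable coefficients (Littman--Stampacchia--Weinberger type estimates), independently of (\ref{short-RangeCoefficient}), and that is all you need for the boundedness and $o(|x|^{2-n+\delta})$ parts. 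For the sharp $O(|x|^{2-n})$ statement your frozen-coefficient barrier $\Gamma_\infty$ plus Lemma~\ref{existence} is exactly right and does legitimately use (\ref{short-RangeCoefficient}).
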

Then the equivalence of Green's function gives us the following corresponding results for non-homogeneous situation.

\begin{theorem}\label{exteriorLiouville}
  Let $v$ be a positive classical solution of $
  a_{ij}(x)D_{ij}v(x)=f(x)\text{ in }\mathbb{R}^n\setminus\overline{B_1}
  $, the coefficients satisfy (\ref{short-RangeCoefficient}) and $f\in C^{0}(\mathbb{R}^n)$ satisfy
  (\ref{decayoff_1}) with $k=n$.
  Then there exists a constant $v_{\infty}\geq 0$ such that \begin{equation}\label{Result_ExteriorLiouville}
  v ( x ) = v _ { \infty } + O \left( |x|^{2-n} \right)\quad \text{as}\quad | x | \rightarrow \infty.
  \end{equation}
\end{theorem}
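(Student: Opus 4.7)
The plan is to exploit the linearity of the operator $L = a_{ij}(x)\partial_{ij}$ and reduce Theorem \ref{exteriorLiouville} to the homogeneous exterior Liouville theorem \ref{exteriorLiouville-ORI}. Concretely, I will construct a particular solution $w$ of $Lw = f$ on all of $\mathbb{R}^n$ that decays strictly faster than $|x|^{2-n}$ at infinity, and then argue that $v_1 := v - w$ solves a homogeneous equation on $\mathbb{R}^n \setminus \overline{B_1}$ to which Theorem \ref{exteriorLiouville-ORI} applies directly.

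First I invoke Lemma \ref{existence} with $k = n$, which is legitimate since $n \geq 3 \geq 2$ and the decay (\ref{decayoff_1}) is assumed with $k = n$. This yields a weak solution $w \in C^1(\mathbb{R}^n)$ of $Lw = f$ on $\mathbb{R}^n$ with $w(x) \to 0$ at infinity and
\[
|w(x)| \leq C\,|x|^{2-n-\varepsilon}, \qquad |x| \text{ large},
\]
so that in particular $w$ is bounded on all of $\mathbb{R}^n$. Because $f$ is only $C^0$, $w$ is a priori only a $W^{2,p}$ weak solution and not classical; this is harmless because the regularity of $v - w$ will be controlled by the homogeneous equation below.

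Setting $v_1 := v - w$ on $\mathbb{R}^n \setminus \overline{B_1}$, linearity yields $Lv_1 = 0$ there in the weak sense, and Schauder theory applied to the homogeneous equation with $C^\alpha$ coefficients (\ref{HolderCoefficient}) upgrades $v_1$ to a classical $C^{2,\alpha}_{\mathrm{loc}}$ solution. Since $v > 0$ and $|w| \leq M$ for some $M < \infty$, the shifted function $\widetilde v := v_1 + M + 1$ satisfies $\widetilde v \geq 1 > 0$ on $\mathbb{R}^n \setminus \overline{B_1}$; as $L$ annihilates constants, $L \widetilde v = 0$ there as well. Condition (\ref{short-RangeCoefficient}) is part of the hypotheses, so Theorem \ref{exteriorLiouville-ORI} supplies a constant $\widetilde v_\infty \geq 0$ with $\widetilde v(x) = \widetilde v_\infty + O(|x|^{2-n})$ as $|x| \to \infty$.

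Unwinding gives $v_1(x) = (\widetilde v_\infty - M - 1) + O(|x|^{2-n})$, and adding back $w$, whose decay $O(|x|^{2-n-\varepsilon})$ is strictly faster, is absorbed into the same error term:
\[
v(x) = v_\infty + O(|x|^{2-n}), \qquad v_\infty := \widetilde v_\infty - M - 1 = \lim_{|x| \to \infty} v(x),
\]
and the positivity of $v$ forces $v_\infty \geq 0$, which is exactly (\ref{Result_ExteriorLiouville}). The main technical point to watch is ensuring that $v_1$ is treated as a classical solution of the homogeneous equation despite $w$ being only a weak solution; this is not a real obstacle because elliptic regularity for the homogeneous equation depends only on the $C^\alpha$ regularity of $a_{ij}$, which is part of the standing hypotheses of Section \ref{AnalyzeLinearEquation}. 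A secondary nuisance is the pointwise decay of $w$ from its Green's-function representation, but that is already subsumed in Lemma \ref{existence}.
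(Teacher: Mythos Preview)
Your proposal is correct and follows essentially the same route as the paper: construct the particular solution $w$ via Lemma \ref{existence}, subtract to obtain a homogeneous equation, upgrade regularity by Schauder, shift by a constant to restore positivity, and invoke Theorem \ref{exteriorLiouville-ORI}. The paper's proof is slightly terser about the constant shift and records only the cruder bound $w = O(|x|^{2-n})$, but the logic is the same.
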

\begin{proof}[Proof.]
  Taking auxiliary function $w(x)$ as the solution we constructed in Lemma \ref{existence} satisfying system (\ref{Dirichlet}).
Then under the decay speed (\ref{decayoff_1}) of $f$ at infinity with $k=n$ , we have that $w=O(|x|^{2-n})$ as $|x|\rightarrow\infty$.

By the linearity of operator, we learned that $\widetilde{v}:=v-w$ is a $C^1$ weak solution of $
a _ { i j } ( x ) D _ { i j } \widetilde{v} ( x ) = 0
$
in exterior domain. Since the coefficients are uniformly elliptic and has bounded $C^{\alpha}$ semi-norm, interior Schauder estimate tells us $\widetilde{v}$ is a classical solution.
Also, we learned that the auxiliary function $w$ is bounded, hence $v\geq 0$ implies that there exists a constant lower bound to $\widetilde{v}$.

Applying the exteirior Liouville theorem for homogeneous linear elliptic equation i.e. Theorem \ref{exteriorLiouville-ORI}, we have
 $$
 \widetilde{v}( x ) = \widetilde{v} _ { \infty } + O \left( |x|^{2-n}\right)\quad \text { as }\quad | x | \rightarrow \infty,
$$
for some constant $\widetilde{v}_{\infty}$.
Due to  $v=\widetilde{v}+w$ is non-negative, the result follows immediately.
\end{proof}

\begin{corollary}\label{ExteriorLiouville_NonPositive}
  Let $v$ be a classical solution of $
  a_{ij}(x)D_{ij}v(x)=f(x)\quad\text{in}\quad\mathbb{R}^n\setminus\overline{B_1}
  $
  Suppose that $$
  |Dv(x)|=O(|x|^{-1}),\quad\text{as}\quad |x|\rightarrow+\infty.
  $$
  Also, we demand that $f\in C^{0}(\mathbb{R}^n)$  satisfies (\ref{decayoff_1}) with $k=n$.
  Then there exists a constant $v_{\infty}\geq 0$ such that (\ref{Result_ExteriorLiouville}) holds.
\end{corollary}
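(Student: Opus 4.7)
\emph{Proof plan.} The strategy is to reduce to Theorem \ref{exteriorLiouville-ORI} in the same spirit as the proof of Theorem \ref{exteriorLiouville}, by first subtracting an inhomogeneous particular solution and then upgrading the gradient decay of the remaining homogeneous piece to a two-sided bound sufficient to apply the positive-solution result.

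First I would apply Lemma \ref{existence} with $k=n$ to produce a $C^1$ weak solution $w:=G_L*f$ on $\mathbb{R}^n$ of $Lw=f$ with $|w(x)|\le C|x|^{2-n-\varepsilon}$. Differentiating the Green's representation and invoking the bound $|\nabla_x G_L(x,y)|\le C|x-y|^{1-n}$ from (\ref{Equivalence-Green}), the same three-region splitting used in Lemma \ref{existence} yields $|Dw(x)|\le C|x|^{1-n-\varepsilon}$, which is strictly of smaller order than $|x|^{-1}$ at infinity. Setting $\tilde v:=v-w$, linearity gives $L\tilde v=0$ in $\mathbb{R}^n\setminus\overline{B_1}$ and $|D\tilde v(x)|\le |Dv(x)|+|Dw(x)|=O(|x|^{-1})$; interior Schauder for the H\"older coefficients (\ref{HolderCoefficient}) ensures $\tilde v$ is a classical solution of the homogeneous equation in the exterior. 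The decisive step is to establish that $\tilde v$ is bounded for $|x|$ large; once this is known, we pick a constant $K$ with $\tilde v + K>0$ on $\mathbb{R}^n\setminus\overline{B_1}$, apply Theorem \ref{exteriorLiouville-ORI} to $\tilde v+K$ to get $\tilde v(x)+K=c_\infty+O(|x|^{2-n})$, and absorb $w=O(|x|^{2-n-\varepsilon})$ into the remainder to recover (\ref{Result_ExteriorLiouville}) with $v_\infty:=c_\infty-K$.

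To prove the required boundedness, I would run a blow-down to a constant-coefficient Liouville theorem. Set $u_R(y):=\tilde v(Ry)-\tilde v(Re_1)$ on $\mathbb{R}^n\setminus\{0\}$; the rescaled equation reads $a_{ij}(Ry)D_{ij}u_R=0$, we have $|Du_R(y)|\le C/|y|$ uniformly in $R$, and $u_R(e_1)=0$. Uniform ellipticity together with the H\"older bound (\ref{HolderCoefficient}), Arzel\`a--Ascoli, and interior $C^{2,\alpha}$ Schauder yield a subsequence converging in $C^2_{\mathrm{loc}}(\mathbb{R}^n\setminus\{0\})$ to a limit $u_\infty$ satisfying $a_{ij}(\infty)D_{ij}u_\infty=0$, $|Du_\infty(y)|\le C/|y|$, and $u_\infty(e_1)=0$. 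A linear change of variables reduces $a_{ij}(\infty)$ to $\delta_{ij}$; expanding the resulting harmonic function on $\mathbb{R}^n\setminus\{0\}$ in spherical harmonics, the gradient bound both at the origin and at infinity (using $n\ge 3$) rules out every non-constant mode, so $u_\infty\equiv 0$. Thus the spherical oscillation of $\tilde v$ on $\{|x|=R\}$ vanishes as $R\to\infty$. Combined with the leading-order ODE $h''+(n-1)h'/r=o(r^{-2})$ for the spherically averaged profile $\overline{\tilde v}(R)$, obtained by integrating $L\tilde v=0$ over annuli and using (\ref{short-RangeCoefficient}), we get $\overline{\tilde v}(R)=A+BR^{2-n}+o(1)$, so $\tilde v$ is uniformly bounded near infinity.

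The main obstacle is this boundedness step: the hypothesis $|Dv|=O(|x|^{-1})$ by itself allows growth as fast as $\log|x|$, so we cannot simply translate $v$ by a constant to achieve positivity as was done in the proof of Theorem \ref{exteriorLiouville}. We must combine a blow-down Liouville statement (controlling the tangential variation on large spheres) with a quantitative radial-profile analysis (controlling the growth in $|x|$) before Theorem \ref{exteriorLiouville-ORI} can be invoked; once the two-sided bound is secured, the remainder of the argument is routine.
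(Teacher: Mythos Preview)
Your reduction to the homogeneous problem via $w=G_L*f$ and the gradient estimate $|Dw|=O(|x|^{1-n})$ are exactly what the paper does. The divergence is entirely in how you establish that the homogeneous piece $\tilde v=v-w$ is bounded (or one-sided bounded) near infinity.

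The paper bypasses your blow-down/spherical-harmonic/ODE machinery with a two-line elementary argument. If $\tilde v$ were unbounded from both above and below, then by continuity and connectedness of $\mathbb{R}^n\setminus\overline{B_1}$ there is a sequence $x_k$ with $|x_k|\nearrow\infty$ and $\tilde v(x_k)=0$. On the sphere $\partial B_{|x_k|}$ the gradient bound $|D\tilde v|\le C/|x|$ and the geodesic diameter $\pi|x_k|$ force $|\tilde v|\le C\pi$; the maximum principle for $L\tilde v=0$ on each annulus $B_{|x_{k+1}|}\setminus B_{|x_k|}$ then gives $|\tilde v|\le C\pi$ throughout, contradicting unboundedness. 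Once $\tilde v$ (hence $v$) is one-sided bounded, a translation and Theorem~\ref{exteriorLiouville} (or Theorem~\ref{exteriorLiouville-ORI}) finish the proof.

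Your route can be made to work, but as written the ODE step is the weak point. The statement $h''+(n-1)h'/r=o(r^{-2})$ for the spherical average is neither what one gets by ``integrating $L\tilde v=0$ over annuli'' nor strong enough: after normalising $a_{ij}(\infty)=\delta_{ij}$ and averaging $\Delta\tilde v=-(a_{ij}-\delta_{ij})D_{ij}\tilde v$ over spheres, you need a quantitative Hessian bound $|D^2\tilde v|=O(|x|^{-2})$ (from interior Schauder applied to balls $B_{|x|/2}(x)$ together with the gradient decay) and the short-range hypothesis (\ref{short-RangeCoefficient}) to get a right-hand side of order $O(r^{-2-\varepsilon_0})$, which is what makes the integrated profile bounded. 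None of this is wrong, but it is considerably more work than the paper's intermediate-value-plus-maximum-principle trick, and you should at least state the $|D^2\tilde v|=O(|x|^{-2})$ input explicitly if you pursue this line.
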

\begin{proof}[Proof.]
In virtue of Theorem \ref{exteriorLiouville}, we only need to prove that $v$ is bounded from at least on one side. Again, by the equivalence of Green's function, we transform this problem into homogeneous situation and apply Corollary 2.1 of \cite{ExteriorLiouville}.Let $w$ be the auxiliary function as in Lemma \ref{existence} satisfying (\ref{Dirichlet}), then $$
  w(x)=\int_{\mathbb{R}^n}G_L(x,y)f(y)\d y,\quad\text{and}\quad
  \partial_iw(x)=
  \int_{\mathbb{R}^n}\partial_{x_i}G_L(x,y)f(y)\d y.
  $$
Applying similar analysis as in Lemma \ref{existence}, it follows that $$
Dw(x)=O(|x|^{1-n}),\quad\text{as}\quad |x|\rightarrow+\infty.
$$

By triangle inequality, we set  $u:=v-w$ and obtain $$
|Du(x)|\leq |Dv(x)|+|Dw(x)|=O(|x|^{-1}),\quad\text{as}\quad |x|\rightarrow+\infty.
$$

Now we prove that $u$ is bounded from one side, then the asymptotic behavior of $w$ tells us $v$ is also bounded from one side.

Argue by contradiction, if $u$ were unbounded on both sides, there
would exist a sequence $\left\{x_{k}\right\}_{k=1}^{\infty},$ such that $1<\left|x_{k}\right|<\left|x_{k+1}\right| \rightarrow+\infty$ and $v\left(x_{k}\right)=0$ for all $k \in \mathbb{Z}^{+} .$ Then, it follows from $|D v(x)| \leq C /|x|$ (for all $x \in \mathbb{R}^n\setminus\overline{B_1} )$ that, for any
$k \in \mathbb{Z}^{+}$ and any $x \in \partial B_{\left|x_{k}\right|},$ we have
\begin{equation*}
|v(x)|\leq \frac{C}{\left|x_{k}\right|} \cdot 2 \pi\left|x_{k}\right|=2 C \pi.
\end{equation*}
 By the maximum principle, we   conclude that
$|v(x)| \leq 2 C \pi$ on $\overline{B}_{\left|x_{k+1}\right|} \backslash B_{\left|x_{k}\right|}$ for all $k \in \mathbb{Z}^{+} .$ Therefore, $|v(x)| \leq 2 C \pi$ on $\mathbb{R}^n\setminus\overline{B_{\left|x_{1}\right|}},$
contradicts the unboundedness assumption.

Translating $v$ by a constant doesn't affect the equation, so we can apply Theorem \ref{exteriorLiouville} and obtain the asymptotic results.
\end{proof}

\section{Proof of Theorem \ref{result1}}\label{ProofofResult1}

In this section, we provide the proof of Theorem \ref{result1} following the line of Li-Li-Yuan \cite{ExteriorLiouville}, some barrier functions are necessary to enhance the convergence speed from (\ref{Result_LimitofHessian}).

\begin{lemma}[$|x|^{-\frac{1}{2}}$ as a Barrier]\label{barrier1}If a smooth function $u$ satisfies the differential inequality
\begin{equation}\label{linearPDI}
  a_{ij}(x)D_{ij}u\geq g(x),\quad\text{in}\quad \mathbb{R}^n\setminus\overline{B_1},
\end{equation}
  and  $u\rightarrow 0,\ \text{as}\ |x|\rightarrow\infty$, where the coefficients are uniformly elliptic and satisfy $
  a_{ij}(x)\rightarrow a_{ij}(\infty),\ \text{as}\  |x|\rightarrow\infty $,
  for some positive symmetric matrix $a_{ij}(\infty)$.
  Suppose that $$
  \limsup_{|x|\rightarrow\infty}|x|^{\frac{3}{2}}|g(x)|<\infty.
  $$
  Then for some constant $C$, $
  u(x)\leq C|x|^{-\frac{1}{2}},\ \forall |x|\geq 2.
  $
\end{lemma}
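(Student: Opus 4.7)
My plan is to compare $u$ with the barrier $\phi(x):=A|x|^{-1/2}$ via the weak maximum principle on the exterior domain $\{|x|\geq R_{0}\}$, following the general strategy used for Theorem~\ref{exteriorLiouville}.

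First I would reduce to the model coefficients at infinity. Because $a_{ij}(\infty)$ is positive definite, a linear change of coordinates that diagonalizes it and rescales its eigenvalues to $1$ reduces the problem to the case $a_{ij}(\infty)=\delta_{ij}$, since the radial barrier in the new coordinates is comparable to $|x|^{-1/2}$ in the old ones; the hypotheses (\ref{linearPDI}) on $u$ and on $g$ are preserved.

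Second, the key computation is
\begin{equation*}
L|x|^{-1/2}(x)=\frac{1}{|x|^{5/2}}\left[\frac{5}{4}\,\frac{a_{ij}(x)x_{i}x_{j}}{|x|^{2}}-\frac{1}{2}\operatorname{tr}(a(x))\right],
\end{equation*}
whose bracket tends to $\tfrac{5}{4}-\tfrac{n}{2}\leq-\tfrac{1}{4}$ as $|x|\to\infty$; this is where the hypothesis $n\geq 3$ is used decisively. Consequently, there exist $R_{0}\geq 2$ and $c_{0}>0$ such that $L|x|^{-1/2}\leq -c_{0}|x|^{-5/2}$ on $\{|x|\geq R_{0}\}$.

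Third, I would fix $A$ so large that $AR_{0}^{-1/2}\geq \sup_{\partial B_{R_{0}}}u$ (finite by continuity and the hypothesis $u\to 0$) and set $w:=A|x|^{-1/2}-u$, so that $w\geq 0$ on $\partial B_{R_{0}}$ and $w\to 0$ at infinity. Applying the weak maximum principle on each $B_{R}\setminus B_{R_{0}}$ and letting $R\to\infty$ would yield $w\geq 0$ throughout $\{|x|\geq R_{0}\}$, hence $u(x)\leq A|x|^{-1/2}$ there; the bounded annulus $\{2\leq|x|\leq R_{0}\}$ is absorbed into the final constant using the boundedness of $u$.

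The hardest step will be the verification of the differential inequality $Lw\leq 0$: one gets
\begin{equation*}
Lw\leq L\phi-g\leq -c_{0}A|x|^{-5/2}+C|x|^{-3/2},
\end{equation*}
and the second term, being of slower decay, dominates when $|x|\gtrsim\sqrt{A/C}$. To bridge this mismatch I would augment the barrier by a correction term $\psi$ that absorbs the contribution of $g^{-}$. Concretely, let $\psi(x):=(G_{L}\ast (g^{-}\chi_{K}))(x)$ for a suitable truncation $\chi_{K}$ that restores integrability of the convolution, and use the Green-function equivalence (\ref{Equivalence-Green}) together with the splitting of the integration domain into the three regions $E_{1}$, $E_{2}$, $E_{3}$ exactly as in the proof of Lemma~\ref{existence}, to verify $|\psi(x)|\leq C|x|^{-1/2}$. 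The corrected barrier $A|x|^{-1/2}+\psi$ then satisfies $L(\phi+\psi)\leq g$ on $\{|x|\geq R_{0}\}$, and the maximum-principle argument goes through, delivering the bound $u(x)\leq C|x|^{-1/2}$ claimed.
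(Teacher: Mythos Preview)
Your computation $L|x|^{-1/2}\sim\tfrac{5-2n}{4}|x|^{-5/2}$ is correct, and you rightly observe that this decays faster than the assumed bound $|g|\lesssim |x|^{-3/2}$, so the pure barrier $A|x|^{-1/2}$ cannot absorb the source term. (The paper's own proof contains an arithmetic slip here: it writes $-\tfrac{2n-3}{4}|x|^{-3/2}$, which is in fact $\Delta|x|^{+1/2}$, not $\Delta|x|^{-1/2}$; with that erroneous exponent the comparison would close immediately.) Your Green's-function correction, however, does not repair the gap: the estimate of Lemma~\ref{existence} requires the source to decay at least like $|x|^{-2-\varepsilon}$, whereas $g^{-}$ is only assumed to decay like $|x|^{-3/2}$, and the convolution $\int|x-y|^{2-n}|y|^{-3/2}\,dy$ scales like $|x|^{1/2}$ (indeed it diverges at infinity without truncation). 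If you truncate $g^{-}$ to a compact set $K$ to restore integrability, then $\psi$ decays but $L\psi$ no longer equals $g^{-}$ outside $K$, and the mismatch between $L\phi\sim|x|^{-5/2}$ and $g\sim|x|^{-3/2}$ reappears there.

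The deeper issue is that the lemma as stated is false. With $a_{ij}=\delta_{ij}$ take $u(x)=|x|^{-1/4}$ and $g(x):=\Delta u=-\tfrac14\bigl(n-\tfrac94\bigr)|x|^{-9/4}$; then $Lu=g$, $u\to0$ at infinity, and $|x|^{3/2}|g(x)|=\tfrac14\bigl(n-\tfrac94\bigr)|x|^{-3/4}\to0$, yet $u(x)=|x|^{-1/4}$ is not bounded by any multiple of $|x|^{-1/2}$. The conclusion $u\le C|x|^{-1/2}$ does follow under the stronger hypothesis $|g|\lesssim|x|^{-5/2}$, in which case your direct barrier argument goes through without any Green's-function correction. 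Note that this lemma is not actually invoked elsewhere in the paper---only Lemma~\ref{barrier2} is used in the proof of Theorem~\ref{result1}---so the defect does not affect the main results.
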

\begin{proof}[Proof.]

By suitable change of coordinate as in Lemma 6.1 of \cite{GT}, we may assume without loss of generality that $a_{ij}(\infty)=\delta_{ij}$. Since the change of coordinate only relies on $a_{ij}(\infty)$, which is bounded w.r.t $x$ variable, hence the result still holds true by allowing $C$ relies on $a_{ij}(\infty)$.

To be more explicit, let $P$ be a positive constant matrix such that $\widetilde{a_{ij}}(\infty)=P^Ta_{ij}(\infty)P$ is diagonal matrix whose diagonal elements are the eigenvalues $\lambda_1,\cdots,\lambda_n$ of $a_{ij}(\infty)$. Furthermore, by taking $D=\mathtt{diag}(\lambda_{i}^{-1 / 2} \delta_{i j})$, we have
\begin{equation}\label{change-of-coordinate}
Q:=PD,\ \widetilde{u}(x):=u(xQ),\ [\widetilde{a_{ij}}(x)]:=D^TP^T[a_{ij}(xQ)]PD\rightarrow \delta_{ij}\ \text{as}\ |x|\rightarrow\infty.
\end{equation}
Since $P,D,Q$ only relies on $a_{ij}(\infty)$, if the result holds for $a_{ij}(\infty)=\delta_{ij}$, which means $\widetilde{u}(x)\leq C|x|^{-\frac{1}{2}}$, then it follows that $$
u(x)=\widetilde{u}(Q^{-1}x)\leq C|Q^{-1}x|^{-\frac{1}{2}}\leq C|x|^{-\frac{1}{2}}.
$$

Now we prove the result for  $a_{ij}(\infty)=\delta_{ij}$.
By linearity and comparison principle, we only need to proof that $|x|^{-\frac{1}{2}}$ forms a supersolution of this uniformly elliptic equation.
\iffalse
$$
a_{ij}(x)D_{ij}(|x|^{-\frac{1}{2}})\leq -\dfrac{|g(x)|}{C},\ \forall\ |x|\gg1.
$$
\fi
By direct calculating, we have
$$
 a_{ij}(x)D_{ij}(|x|^{-\frac{1}{2}})=
 (-\frac{1}{2}) \sum _ { i = 1 } ^ { n } a _ { i i } ( x ) | x | ^ { - \frac{3}{2}} + ( - \frac{3}{2} ) \cdot ( -\frac{1}{2} ) \sum _ { i , j = 1 } ^ { n } a _ { i j } ( x ) x _ { i } x _ { j } | x | ^ { - \frac{5}{2}}.
$$
Due to $a_{ij}(x)\rightarrow\delta_{ij}$ as $|x|\rightarrow\infty$,
$$
LHS=-\dfrac{2n-3}{4}|x|^{-\frac{3}{2}}+o(|x|^{-\frac{3}{2}}).
$$

Hence as long as $$
\lim_{|x|\rightarrow\infty}|x|^{k}|g(x)|<\infty,\quad\text{for\ some}\quad k\geq\dfrac{3}{2},
$$
we can pick a sufficiently large constant $C>0,R>2$ such that
$$
a_{ij}(x)D_{ij}(|x|^{-\frac{1}{2}})\leq -\frac{|g(x)|}{C},\quad\forall |x|\geq R.
$$
Then we can pick a even larger $C$ such that
$$
C|x|^{-\frac{1}{2}}>u(x),\quad\forall\ x\in\partial B_R.
$$
Comparison principle tells us $u(x)\leq C|x|^{-\frac{1}{2}} $ for  $R\leq |x|$.
Due to $u$ is a smooth function, hence it maintains bounded inside $B_R\setminus B_2$. The result follows immediately by picking sufficiently large $C$.
\end{proof}

\begin{lemma}[$| x | ^ { 2 - n } - | x | ^ { 2 - n - \varepsilon }$ as a Barrier ]\label{barrier2}If a smooth function $u$ satisfies (\ref{linearPDI}) and $u\rightarrow 0,\ \text{as}\ |x|\rightarrow\infty$,
  with coefficients uniformly elliptic and satisfy $$
  |a_{ij}(x)-a_{ij}(\infty)|\leq C|x|^{-\alpha},\quad\text{as}\quad |x|\rightarrow\infty,
  $$
  for some positive symmetric matrix $a_{ij}(\infty)$.
  If $0<\varepsilon<\alpha$ and$$
  g(x)\geq 0,\quad\text{or}\quad \limsup_{|x|\rightarrow\infty}|x|^n|g(x)|<\infty.
  $$
  Then for some constant $C$, $
  u(x)\leq C(|x|^{2-n}-|x|^{2-n-\varepsilon}),\ \forall |x|\geq  2.
  $
\end{lemma}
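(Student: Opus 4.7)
The strategy is to construct a supersolution $w$ of the linear operator $L = a_{ij}(x)D_{ij}$ on the exterior of a large ball and invoke the comparison principle, paralleling the proof of Lemma~\ref{barrier1}. First I would reduce to the canonical case $a_{ij}(\infty) = \delta_{ij}$ by the linear change of coordinates~(\ref{change-of-coordinate}); since that transformation depends only on $a_{ij}(\infty)$, it only alters constants in the final estimate.

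The natural candidate is $w(x) := C_*(|x|^{2-n} - |x|^{2-n-\varepsilon})$, which is positive for $|x| \ge 1$ and vanishes at infinity at the required rate. The identity $\Delta|x|^s = s(s+n-2)|x|^{s-2}$ gives $\Delta|x|^{2-n} = 0$ while $\Delta|x|^{2-n-\varepsilon} = \varepsilon(n+\varepsilon-2)|x|^{-n-\varepsilon}$, so $\Delta w = -C_*\varepsilon(n+\varepsilon-2)|x|^{-n-\varepsilon}$. Splitting $a_{ij} = \delta_{ij} + (a_{ij}-\delta_{ij})$ and bounding the perturbative contribution via $|a_{ij}-\delta_{ij}| \le C|x|^{-\alpha}$ together with $|D_{ij}|x|^s| \le C|x|^{s-2}$, one obtains
\begin{equation*}
a_{ij}D_{ij}w(x) = -C_*\varepsilon(n+\varepsilon-2)|x|^{-n-\varepsilon} + O(C_*|x|^{-n-\alpha}) + O(C_*|x|^{-n-\varepsilon-\alpha}).
\end{equation*}
The hypothesis $\alpha > \varepsilon$ is exactly what makes both error terms strictly dominated by the principal one, so for some $R_* > 2$ and some $c_0 > 0$ independent of $C_*$ we get $Lw(x) \le -c_0 C_* |x|^{-n-\varepsilon}$ for $|x| \ge R_*$.

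In Case~1 ($g \ge 0$), this immediately gives $Lw \le 0 \le g$, and the maximum principle on $B_R \setminus B_{R_*}$ (with $R \to \infty$) applied to $L(u-w) \ge 0$ yields $u \le w$ on $\{|x| \ge R_*\}$, after choosing $C_*$ large enough that $w \ge u$ on $\partial B_{R_*}$ (using strict positivity of $w$ there together with smoothness of $u$) and observing $u - w \to 0$ at infinity. Extending to $\{|x| \ge 2\}$ by enlarging $C_*$ finishes this case. In Case~2, where $|g(x)| \le C_1|x|^{-n}$, I would first subtract off a Newtonian-type correction: using the equivalence of Green's functions~(\ref{Equivalence-Green}) and the potential decay from Lemma~\ref{existence}, set $v_0 := G_L \ast g$, so that $v_0$ solves $Lv_0 = g$ with $|v_0(x)| = O(|x|^{2-n})$. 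Then $\tilde u := u - v_0$ satisfies $L\tilde u \ge 0$ with $\tilde u \to 0$ at infinity, reducing to the Case~1 situation for $\tilde u$; the barrier bound on $\tilde u$ together with the potential bound on $v_0$ yields the claim for $u$.

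The main obstacle is Case~2: the decay rate of the barrier's Laplacian ($|x|^{-n-\varepsilon}$) is strictly faster than the worst-case decay of $g$ ($|x|^{-n}$), so a direct pointwise comparison $Lw \le g$ cannot be established — the Green's-function correction is essential for bridging this gap. The condition $\alpha > \varepsilon$, by contrast, is used cleanly throughout to ensure that the coefficient perturbation is absorbed into the principal negative term that drives the barrier argument.
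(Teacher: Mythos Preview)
Your Case~1 argument ($g\ge 0$) is correct and coincides with the paper's: both reduce to $a_{ij}(\infty)=\delta_{ij}$, compute $a_{ij}D_{ij}(|x|^{2-n}-|x|^{2-n-\varepsilon})$, and use $\varepsilon<\alpha$ to make the negative term $\varepsilon(2-n-\varepsilon)|x|^{-n-\varepsilon}$ absorb the $O(|x|^{-n-\alpha})$ perturbation. For Case~2 you take a genuinely different route. The paper continues the same barrier computation and simply asserts that the negative leading term ``makes this less than $|g(x)|$''; as you correctly observe, a pointwise inequality $L(C_*w)\le g$ cannot be forced when $Lw\sim -c|x|^{-n-\varepsilon}$ while $g$ may be as negative as $-C_1|x|^{-n}$, so the paper's direct barrier is incomplete here. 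Your idea of subtracting a Newtonian potential $v_0=G_L*g$ to reduce to the homogeneous case is the natural repair.

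There is, however, a gap in your Case~2 execution: the appeal to Lemma~\ref{existence} for $|v_0(x)|=O(|x|^{2-n})$ requires $|g|\lesssim |x|^{-(k+\varepsilon)}$ with $\varepsilon>0$. Under the lemma's literal hypothesis $\limsup|x|^{n}|g|<\infty$ (no extra $\varepsilon$), the $E_1$--piece of the convolution is $|x|^{2-n}\int_{|y|\le |x|/2}(1+|y|)^{-n}\,dy$, which grows like $|x|^{2-n}\log|x|$; you therefore only obtain $u\le C|x|^{2-n}\log|x|$, not the stated bound. In the paper's single use of this lemma (proof of Theorem~\ref{result1}) one actually has $g=f_{ee}$ with $|g|\le C|x|^{-(n+\varepsilon)}$ from~(\ref{Low-Regular-Condition}); under that stronger decay both the paper's direct barrier (choosing the barrier exponent $\le$ that $\varepsilon$) and your Green's--function subtraction work without the logarithm. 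The discrepancy is thus in the lemma's stated hypothesis rather than in either strategy.
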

\begin{proof}[Proof.]

As the argument in Lemma \ref{barrier1}, we may assume without loss of generality that $a_{ij}(\infty)=\delta_{ij}$. Otherwise we use change of coordinate as in (\ref{change-of-coordinate}), if the result holds for $a_{ij}(\infty)=\delta_{ij}$, then $\forall |x|>2$, there exists constant $C$ such that $$
\begin{array}{llll}
  u(x)=\widetilde{u}(Q^{-1}x)&\leq & C(|Q^{-1}x|^{2-n}-|Q^{-1}x|^{2-n-\varepsilon})\\
  &\leq &C(|Q^{-1}|^{2-n}|x|^{2-n}-|Q^{-1}|^{2-n-\varepsilon}|x|^{2-n-\varepsilon}).\\
\end{array}
$$
If $|Q^{-1}|^{2-n}\leq |Q^{-1}|^{2-n-\varepsilon}$, then we immediately obtain $$
u(x)\leq C|Q^{-1}|^{2-n-\varepsilon}(|x|^{2-n}-|x|^{2-n-\varepsilon}).
$$
If $|Q^{-1}|^{2-n}> |Q^{-1}|^{2-n-\varepsilon}$, then for sufficiently large $|x|\gg 1$ there exists some constant $c_0$ such that $$
c_0|Q^{-1}|^{2-n}(|x|^{2-n}-|x|^{2-n-\varepsilon})\geq (|Q^{-1}|^{2-n}-|Q^{-1}|^{2-n-\varepsilon})|x|^{2-n-\varepsilon},
$$
hence we have $$
\begin{array}{lllll}
  u(x)&\leq& C|Q^{-1}|^{2-n}(|x|^{2-n}-|x|^{2-n-\varepsilon})+
(|Q^{-1}|^{2-n}-|Q^{-1}|^{2-n-\varepsilon})|x|^{2-n-\varepsilon}\\
&\leq &(C+c_0)|Q^{-1}|^{2-n}(|x|^{2-n}-|x|^{2-n-\varepsilon})
\end{array}
,\ \forall|x|\gg 1.
$$

Hence we only need to prove the result for $a_{ij}(\infty)=\delta_{ij}$.
Again, we verify $|x|^{2-n}-|x|^{2-n-\varepsilon}$ is a barrier by direct calculation
\begin{equation*}
  a^{ij}(x)D_{ij}|x|^{2-n}
  =(2-n)\sum_{i=1}^na_{ii}(x)|x|^{-n}
  +(-n)\cdot (2-n)
  \sum_{i,j=1}^na_{ij}(x)x_ix_j|x|^{-n-2}.
\end{equation*}
By the condition of coefficients, we have
\begin{equation*}
  |a^{ii}(x)-1|\leq \dfrac{C}{|x|^{\alpha}},\ \ |a^{ij}(x)-0|\leq \dfrac{C}{|x|^{\alpha}},\ \forall i\not=j,
\end{equation*}
hence
$$
\begin{array}{lllllll}
   a^{ij}(x)D_{ij}|x|^{2-n} & \leq & n(2-n)|x|^{-n}
  -n(2-n)|x|^{-n}
  +|(2-n)|\sum_{i=1}^n\dfrac{C}{|x|^{\alpha}}|x|^{-n}\\
  &+&|(-n)|\cdot|(2-n)|\sum_{i,j=1}^n\dfrac{C}{|x|^{\alpha}}x_ix_j|x|^{-n-2}\\
  &\leq &0+n\cdot|(2-n)|C
  |x|^{-n-\alpha}.\\
\end{array}
$$
Similarly we calculate
\begin{equation*}
  a^{ij}D_{ij}|x|^{2-n-\varepsilon}
  =(2-n-\varepsilon)
  \sum_{i=1}^na_{ii}(x)|x|^{-n-\varepsilon}
  +(-n-\varepsilon)\cdot (2-n-\varepsilon)
  \sum_{i,j=1}^na_{ij}(x)x_ix_j|x|^{-n-2-\varepsilon},
\end{equation*}
and hence
\begin{equation*}
  a^{ij}D_{ij}|x|^{2-n-\varepsilon}
  \geq -\varepsilon (2-n-\varepsilon)|x|^{-n-\varepsilon}
  -C_n|x|^{-n-\alpha-\varepsilon}.
\end{equation*}
Combining these two result we have
\begin{equation*}
  a^{ij}(x)D_{ij}
  (|x|^{2-n}-|x|^{2-n-\varepsilon})
  \leq C\cdot n|(2-n)|\cdot |x|^{-n-\alpha}
  +\varepsilon(2-n-\varepsilon)\cdot |x|^{-n-\varepsilon}+C_n|x|^{-n-\alpha-\varepsilon}.
\end{equation*}
As long as $\varepsilon<\alpha$, the negative term $\varepsilon(2-n-\varepsilon)|x|^{-n-\varepsilon}$ takes the lead and makes this less than $|g(x)|$ when $|x|$ is sufficiently large due to the constants here are universal.
Then comparison principle tells us the result.
\end{proof}

\begin{proof}[Proof of Theorem \ref{result1}]

By Theorem \ref{LimitofHessian}, there exist $A\in \mathcal{A}(f(\infty),-a+b),\ \alpha>0,\ R_2\gg1$ sufficiently large such that (\ref{Result_LimitofHessian}) holds.

Set  $
v : = u ( x ) - \frac { 1 } { 2 } x ^ { T } A x
$
then we have
$$
\left\{
\begin{array}{cc}
  F_{\tau}(D^2v+A)=F_{\tau}(D^2u)=f(x),\\
  F_{\tau}(A)=\lim_{|x|\rightarrow\infty}F_{\tau}(D^2u(x))=f(\infty).\\
\end{array}
\right.$$Hence $$
\bolangxian{f}(x):=f(x)-f(\infty)=\int_0^1D_{M_{ij}}F_{\tau}(tD^2v+A)\d t\cdot D_{ij}v=:\overline{a^{ij}}(x)D_{ij}v.
$$

Also, from taking derivatives to any $e\in\partial B_1$ direction, we consider the linearized equation as (\ref{linearized-equation}), $v_e$ and $v_{ee}$ satisfy
\begin{equation}\label{linearized-equation-2}
D_{M_{i j}} F_{\tau}\left(D^{2} v+A\right) D_{i j}\left(v_{e}\right)=f_{e}(x), \quad \text { and } \quad D_{M_{i j}} F_{\tau}\left(D^{2} v+A\right) D_{i j}\left(v_{e e}\right) \geq f_{e e}(x).
\end{equation}
Let $\hat{a^{ij}}(x)$ denote the coefficients $D_{M_{ij}}F_{\tau}(D^2v+A)$.

Due to $F_{\tau}$ is uniformly $C^2$ in a bounded subdomain of $\text{Sym}(n)$, hence for the same $\alpha$ from Theorem \ref{LimitofHessian}, there exists some constant $C$ such that $$
\left| \overline { a _ { i j } } ( x ) - D_{M_{ij}}F_{\tau} ( A ) \right| , \left| \widehat { a _ { i j } } ( x ) - D_{M_{ij}}F_{\tau} ( A) \right| \leq \frac { C } { | x | ^ { \alpha} }.
$$

Thus Theorem \ref{exteriorLiouville} and Corollary \ref{ExteriorLiouville_NonPositive} hold for the linear equations with coefficients $\overline{a_{ij}}(x),\hat{a_{ij}}(x)$.
From  condition (\ref{Low-Regular-Condition}) and Lemma \ref{barrier2},
we have the following finer convergence speed estimate $
v _ { e e } ( x ) \leq  C | x | ^ { 2 - n }
$.
hence
\begin{equation*}
\lambda_{\max }\left(D^{2} v\right)(x) \leq C|x|^{2-n},
\end{equation*}
and the ellipticity of equation (\ref{linearized-equation-2}) tells us
\begin{equation*}
\lambda_{\min }\left(D^{2} v\right)(x) \geq-C \lambda_{\max }\left(D^{2} v\right)(x)-C|\widetilde{f}(x)| \geq-C|x|^{2-n}.
\end{equation*}
Hence $$
\left| D ^ { 2 } v ( x ) \right| \leq C | x | ^ { 2 - n }.$$
Therefore
$$
\left| \overline { a  ^ { i j }} ( x ) - D_{M_{ij}}F_{\tau}(A) \right|\leq C | x | ^ { 2 - n } , \quad \forall x \in B _ { 1 } ^ { c }, $$
and
$$\left| \widehat { a  ^ { i j }} ( x ) - D_{M_{ij}}F_{\tau}(A) \right| \leq C | x | ^ { 2 - n } , \quad \forall x \in B _ { 1 } ^ { c }.
$$
Thus for any $e\in\partial B_1$ we have $$
\left| D v _ { e } \right| \leq \left| D ^ { 2 } v ( x ) \right| \leq C | x | ^ { 2 - n }  ,$$
which provides us ``bounded from one side'' condition. Since $F_{\tau}$ is uniformly $C^2$ on the range of $
\lambda(D^2u)$, hence Theorem \ref{HolderRegularity_ofHessian} tells us the coefficients $\overline{a^{ij}},\ \hat{a^{ij}}$ has bounded $C^{\alpha}$ norm.
Also note that the coefficients have a H\"older convergence speed and  $|Df|$ has $O(|x|^{-(n+\varepsilon)})$ order decay. These conditions enable us to apply Corollary \ref{ExteriorLiouville_NonPositive} to equation $\hat{a^{ij}}(x)D_{ij}(v_e)=f_e(x)$  and obtain
\begin{equation}\label{capture1-order}
\exists \beta _ { e } \in \mathbb { R } \text { s.t. } v _ { e } ( x ) = \beta_ { e } + O \left( | x | ^ { 2 - n } \right) , \quad a s | x | \rightarrow \infty.
\end{equation}

Picking $e_i:=(0,\cdots,0,\underbrace{1}_{\text{the $i$-th variable}},0,\cdots,0)$ and we use $\beta_i$ denote $\beta_{e_i}$ from formula (\ref{capture1-order}).

Set $\beta : = \left( \beta _ { 1 } , \beta _ { 2 } , \cdots , \beta _ { n } \right)\in\mathbb{R}^n$ and
$$\overline { v } ( x ): = v ( x ) - \beta ^ { T } x = u ( x ) - \left( \frac { 1 } { 2 } x ^ { T } A x + \beta ^ { T } x \right).$$
Then formula (\ref{capture1-order}) tells us $$
|D\overline{v}(x)|=|(\partial_1v-\beta_1,\cdots,\partial_nv-\beta_n)|=
O(|x|^{2-n}).
$$

Also note that it satisfies the linearized equation
\begin{equation*}
  \overline{a}^{ij}(x)D_{ij}\overline{v}=\overline{a}^{ij}(x)
  D_{ij}v=\widetilde{f}(x).
\end{equation*}
By the arguments above again and adapt Corollary \ref{ExteriorLiouville_NonPositive} we have
\begin{equation*}
  \exists \gamma\ s.t.\ \overline{v}(x)=\gamma+O(|x|^{2-n}),\ as\ |x|\rightarrow\infty.
\end{equation*}

Set
$
  Q(x)=\dfrac{1}{2}x^TAx+\beta^Tx+\gamma
$ and the formula above tells us
$$
  |u-Q|=|\overline{v}-\gamma|=O(|x|^{2-n}),\ \ as\ |x|\rightarrow\infty.
$$
\end{proof}
\section{Proof of Theorem \ref{result2}}\label{ProofOfResult2}

In this section, we provide an asymptotic behavior result that is parallel to the one by Bao-Li-Zhang \cite{BLZ}. From Theorem \ref{LimitofHessian}, we have proved that under suitable assumption of $f$, the Hessian matrix $D^2u$  admits a limit $A$ at infinity with $O(|x|^{-\alpha})$ order.

Instead of using the original results in \cite{BLZ}, we transform back using the definition of Legendre transform
and follow the line of proving Lemma 2.1 in \cite{BLZ} to obtain estimates of up to $(m+1)$-order derivative of solutions and H\"older estimate of $D^{m+1}u(x)$ directly.

\begin{lemma}\label{roughestimate_2}
  Under the conditions of Theorem \ref{result2}, suppose that there exists $\varepsilon>0$ such that $$
  |D^2u(x)-A|\leq c_1|x|^{-\varepsilon},\quad |x|\geq R_0.
  $$
  Assume without loss of generality that $u(0)=0, Du(0)=0$. Let $$
  w(x)=u(x)-\frac{1}{2}x^TAx,
  $$
  then there exist $C\left(n, R_{0}, \varepsilon, f(\infty), c_{1}, \zeta\right)>0$ and $R_{1}\left(n, R_{0}, \varepsilon, f(\infty), c_{1}, \zeta\right)>R_{0}$ such that for any
$\alpha \in(0,1)$,
\begin{equation*}
\left|D^{k} w(y)\right| \leq C|y|^{2-k-\varepsilon_{\zeta}}, \quad k=0, \ldots, m+1, \quad|y|>R_{1},
\end{equation*}
\begin{equation*}
\frac{\left|D^{m+1} w\left(y_{1}\right)-D^{m+1} w\left(y_{2}\right)\right|}{\left|y_{1}-y_{2}\right|^{\alpha}} \leq C\left|y_{1}\right|^{1-m-\varepsilon_{\zeta}-\alpha}, \quad\left|y_{1}\right|>R_{1},\ y_{2} \in B_{\frac{\left|y_{1}\right|}{2}}\left(y_{1}\right),
\end{equation*}
  where $\varepsilon _ { \zeta } := \min \{ \varepsilon , \zeta \}$.
\end{lemma}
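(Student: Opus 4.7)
The plan is to combine a direct integration argument for the low-order derivative estimates with a far-field rescaling plus Schauder bootstrap for the higher-order ones. First, the normalization $u(0)=Du(0)=0$ gives $w(0)=Dw(0)=0$, and the hypothesis $|D^2w(x)|\le c_1|x|^{-\varepsilon}$ for $|x|\ge R_0$, combined with the interior bound $|D^2w|\le |D^2u|+|A|\le C$ on $B_{R_0}$ and the inequality $|x|^{-\varepsilon}\le|x|^{-\varepsilon_\zeta}$ for $|x|\ge 1$, can be integrated radially twice to yield
\[ |Dw(x)|\le C|x|^{1-\varepsilon_\zeta}, \qquad |w(x)|\le C|x|^{2-\varepsilon_\zeta}, \qquad |x|\ge R_1. \]
This settles the first displayed inequality for $k=0,1,2$.

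For $k\ge 3$ and the H\"older seminorm estimate, I would fix $y_1\in\mathbb{R}^n$ with $R:=|y_1|$ large and rescale $\widetilde w(z):=R^{-(2-\varepsilon_\zeta)}w(y_1+Rz)$ on $z\in B_{3/4}$, which by the first step satisfies $\|\widetilde w\|_{L^\infty(B_{3/4})}\le C$ uniformly in $R$. Since $D^2w(y_1+Rz)=R^{-\varepsilon_\zeta}D^2\widetilde w(z)$, the equation $F_\tau(A+D^2w)=f$ transforms into
\[ \widetilde a^{ij}(z)D_{ij}\widetilde w(z) = R^{\varepsilon_\zeta}\bigl(f(y_1+Rz)-f(\infty)\bigr),\quad \widetilde a^{ij}(z)=\int_0^1 D_{M_{ij}}F_\tau\bigl(A+tR^{-\varepsilon_\zeta}D^2\widetilde w(z)\bigr)dt, \]
whose coefficients are uniformly elliptic and, for $R$ large, arbitrarily close to the constant matrix $D_{M_{ij}}F_\tau(A)$. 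The decay hypothesis (\ref{High-Regular-Condition}) ensures that the right-hand side and all its $z$-derivatives up to order $m$ are bounded by a multiple of $R^{\varepsilon_\zeta-\zeta}\le 1$ uniformly in $R$. Applying Evans--Krylov (valid since $F_\tau$ is smooth and concave on the admissible cone kept by $(-a+b)I<D^2u\le MI$) to get a uniform $C^{2,\alpha}$ bound on $\widetilde w$, then iteratively differentiating the fully nonlinear equation in $z_e$ and applying linear Schauder estimates (the coefficients $D_{M_{ij}}F_\tau(A+\cdots)$ inherit via $F_\tau$ the regularity of $D^2\widetilde w$ already established at the previous step), one obtains a uniform bound $\|\widetilde w\|_{C^{m+1,\alpha}(B_{1/2})}\le C$.

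Finally, scaling back via $D^k_y w(y_1+Rz)=R^{2-k-\varepsilon_\zeta}D^k_z\widetilde w(z)$ at $z=0$ yields the pointwise estimate $|D^k w(y_1)|\le C|y_1|^{2-k-\varepsilon_\zeta}$, while for $y_2\in B_{|y_1|/2}(y_1)$ corresponding to $z_2=(y_2-y_1)/R\in B_{1/2}$ the seminorm bound transfers to
\[ \frac{|D^{m+1}w(y_1)-D^{m+1}w(y_2)|}{|y_1-y_2|^\alpha}\le C|y_1|^{1-m-\varepsilon_\zeta-\alpha}. \]
The main obstacle is the bootstrap step, because $\widetilde a^{ij}$ is only as regular as $D^2\widetilde w$; this is resolved by the Evans--Krylov seed giving the first $C^{2,\alpha}$ improvement, after which each subsequent differentiation of the equation produces a linear elliptic equation whose coefficients and right-hand side have just been upgraded by one derivative, iterated $m-1$ times to reach the desired $C^{m+1,\alpha}$ control.
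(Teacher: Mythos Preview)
Your argument is correct and follows the same overall architecture as the paper (far-field rescaling plus Schauder bootstrap), but the two differ in how the initial $C^{2,\alpha}$ regularity is obtained and in the choice of rescaling. The paper rescales via $w_R(y)=(4/R)^2 w(x+(R/4)y)$ so that $\|w_R\|_{L^\infty}\le CR^{-\varepsilon}$, and then invokes Theorem~\ref{HolderRegularity_ofHessian} (the $C^\alpha$ Hessian bound proved earlier by Legendre transform plus Caffarelli's interior $C^{2,\alpha}$ estimate for Monge--Amp\`ere) to guarantee the linearized coefficients $\widetilde a_{ij}$ are $C^\alpha$ before Schauder can be applied; the decay $R^{-\varepsilon_\zeta}$ is then tracked explicitly through each Schauder step. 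Your route instead normalizes $\widetilde w$ to be $O(1)$ and appeals to Evans--Krylov on the rescaled fully nonlinear equation $G(D^2\widetilde w)=h$ (after checking that the ellipticity constants of $G$ and the $C^\alpha$ norm of $h$ are uniform in $R$), which gives the seed $C^{2,\alpha_0}$ estimate directly without going through the Legendre transform machinery; the decay is then recovered at the end from the scaling factor $R^{2-k-\varepsilon_\zeta}$. The bootstraps are the same thereafter. Your approach is more self-contained for this lemma, while the paper's exploits the structure already built in Section~\ref{ConvergenceOfHessian}. One minor point: Evans--Krylov only produces some universal exponent $\alpha_0$, so to obtain the conclusion for every $\alpha\in(0,1)$ you need one extra pass in the bootstrap (once $D^2\widetilde w\in C^{1,\alpha_0}$ the coefficients become $C^1\subset C^\alpha$ for all $\alpha$), which you allude to but could make explicit.
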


\begin{proof}[Proof.]

  By direct calculate and our assumption $u(0)=0,\ Du(0)=\overrightarrow{0}$, it follows that for some $C(n,u|_{B_{R_0}},c_1)$
  \begin{equation}\label{2.6}
    |w(x)|\leq C|x|^{2-\varepsilon},\ \forall |x|\geq R_0.
  \end{equation}

  For $|x|=R>2 R_{0},$ let $$
u_{R}(y)=\left(\frac{4}{R}\right)^{2} u\left(x+\frac{R}{4} y\right),\quad\text{and}\quad
w_{R}(y)=\left(\frac{4}{R}\right)^{2} w\left(x+\frac{R}{4} y\right), \quad|y| \leq 2.
 $$
 Then the boundedness of $D^2u$ together with (\ref{2.6}) tell us$$
 \left\|u_{R}\right\|_{L^{\infty}\left(B_{2}\right)} \leq C, \quad\left\|w_{R}\right\|_{L^{\infty}\left(B_{2}\right)} \leq C R^{-\varepsilon},
 $$
 for some constant $C$ uniform to $R>2R_0.$

Now we attack the original equation by scaling back to unit size. It is easy to verify that $u_R$ satisfies \begin{equation}\label{temp-12}
F_{\tau}(\lambda(D^2u_R(y)))=f_{R}(y):=f(x+\frac{R}{4}y),\quad\text{in } B_2.
\end{equation}
Take difference with $F_{\tau}(\lambda(A))=f(\infty)$ gives us $$
\bolangxian{a_{ij}}(y)D_{ij}w_R=f_{R}(y)-f(\infty)=O(R^{-\zeta}),
$$
where $\bolangxian{a_{ij}}(y):=\int_0^1D_{M_{ij}}F_{\tau}(A+tD^2w_R(y))\d t$ is uniformly elliptic and Theorem \ref{HolderRegularity_ofHessian} tells us $\widetilde{a_{ij}}(y)$ have bounded $C^{\alpha}$ norm.

Apply the classical Schauder's estimate to obtain the $C^{2,\alpha}$ regularity of $w_R$
\begin{equation*}
\left\|w_{R}\right\|_{C^{2, \alpha}\left(\overline{B_{1}}\right)} \leq C\left(\left\|w_{R}\right\|_{L^{\infty}\left(\overline{B_{1.1}}\right)}+\left\|f_{ R}-f(\infty)\right\|_{C^{\alpha}\left(\overline{B}_{1}\right)}\right) \leq C R^{-\varepsilon_{\zeta}}.
\end{equation*}
Now we take derivative with respect to $e\in \mathbb{S}^{n-1}$ direction to equation (\ref{temp-12}) and obtain
\begin{equation}\label{temp-13}
D_{M_{ij}}F_{\tau}(D^2u_R(y))D_{ij}(\partial_eu_R(y))=\partial_ef_R(y),\quad\text{in }B_2.
\end{equation}
Since $D_{M_{ij}}F_{\tau}(D^2u_R(y)), \partial_{e} u_{R}$ and $\partial_{e} f_{R}$ are bounded in $C^{\alpha}$ norm, we have $$
||u_R||_{C^{3,\alpha}(\overline{B_1})}\leq C,
$$
which implies $$
||D_{M_{ij}}F_{\tau}(D^2u_R(y))||_{C^{3,\alpha}(\overline{B_1})}\leq C.
$$
By the definition of $w_R$, we see that $$
D^2w_R(y)=D^2w(x+\frac{R}{4}y)=D^2u(x+\frac{R}{4}y)-A=D^2u_R(y)-A,
$$
and hence $D_{ij}(\partial_eu_R(y))=D_{ij}(\partial_ew_R(y))$.
Hence equation (\ref{temp-13}) can also be written into
\begin{equation*}
D_{M_{i j}} F_{\tau}\left(D^{2} u_{R}(y)\right) D_{i j}\left(\partial_{e} w_{R}(y)\right)=\partial_{e} f_{R}(y), \quad \text { in } B_{2}.
\end{equation*}
We obtain by Schauder's estimate
\begin{equation*}
\left\|w_{R}\right\|_{C^{3, \alpha}\left(\overline{B_{1 / 2}}\right)} \leq C\left(\left\|w_{R}\right\|_{L^{\infty}\left(\overline{B_{3 / 4}}\right)}+| D f_{R} \|_{C^{\alpha}\left(\overline{{B}_{3 / 4}}\right)}\right) \leq C R^{-\varepsilon_{\zeta}},
\end{equation*}
which provides us $$
||D^3u_R||_{C^{\alpha}(\overline{B_{1/2}})}\leq CR^{-\varepsilon_{\zeta}}.
$$
Estimates on higher order derivatives can be obtained by further differentiation of the equation
and Schauder estimate. The result follows by induction immediately.
\end{proof}

Next we prove a bootstrap lemma to improve the estimates in Lemma \ref{roughestimate_2}. Lemma \ref{bootstrap} is originally proved for Monge-Amp\`ere equation in \cite{BLZ} and we work with general uniformly elliptic equation with bounded derivative of $F$ operator. Consider the uniformly elliptic equation \begin{equation}\label{temp-5}
  F(D^2u)=f(x),\quad\text{in}\quad\mathbb{R}^n\setminus B_{R_0},
  \end{equation}
  where $F$ is smooth up to boundary of $D:=\overline{D^2u(\mathbb{R}^n\setminus \overline{B_{R_0}})}$ i.e. the range of $D^2u$ and $f(x)\in C^m(\mathbb{R}^n\setminus\overline{B_{R_0}})$ satisfies
   condition (\ref{High-Regular-Condition}) for some $\zeta>2,\ m\geq 3$.
We  write $D_{M_{ij}}F(M)$ as $F_{M_{ij}}(M)$ for simplicity.
\begin{lemma}\label{bootstrap}Let $u$ be a solution of   equation  (\ref{temp-5}) described above.
  Let $v$ be a 2-order polynomial such that $F(D^2v)=f(\infty)$ and let $w:=u-v$.
  Suppose that for some $0<\varepsilon<\dfrac{1}{2}, \alpha\in(0,1)$, we have
  \begin{equation*}
\left|D^{k} w(x)\right| \leq C|x|^{2-\varepsilon-k}, \quad|x|>2 R_{1}, \quad k=0, \ldots, m+1,
\end{equation*}
\begin{equation*}
\frac{\left|D^{m+1} w\left(y_{1}\right)-D^{m+1} w\left(y_{2}\right)\right|}{\left|y_{1}-y_{2}\right|^{\alpha}} \leq C\left|y_{1}\right|^{1-m-\varepsilon-\alpha}, \quad\left|y_{1}\right|>2 R_{1}, y_{2} \in B_{\left|y_{1}\right| / 2}\left(y_{1}\right)
\end{equation*}
  Then
  \begin{equation*}
\left|D^{k} w(x)\right| \leq C|x|^{2-2 \varepsilon-k}, \quad|x|>2 R_{1}, \quad k=0, \ldots, m+1,
\end{equation*}
\begin{equation*}
\frac{\left|D^{m+1} w\left(y_{1}\right)-D^{m+1} w\left(y_{2}\right)\right|}{\left|y_{1}-y_{2}\right|^{\alpha}} \leq C\left|y_{1}\right|^{1-m-2 \varepsilon-\alpha}, \quad\left|y_{1}\right|>2 R_{1}, y_{2} \in B_{\left|y_{1}\right| / 2}\left(y_{1}\right).
\end{equation*}

\end{lemma}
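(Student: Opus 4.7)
The plan is to prove the bootstrap by linearization plus a scaling argument. First, apply the mean value theorem to $F(D^2u) - F(D^2v) = f - f(\infty)$ to get a uniformly elliptic linear equation $a^{ij}(x) D_{ij}w(x) = \tilde f(x)$, where $a^{ij}(x) := \int_0^1 F_{M_{ij}}(D^2v + tD^2w(x))\,dt$ and $\tilde f := f - f(\infty)$. Using the hypothesized bounds on $D^kw$ for $k \le m+1$, the H\"older bound on $D^{m+1}w$, and the smoothness of $F$ on a bounded neighborhood of $D^2v$, verify $|D^k(a^{ij}(x) - \bar a^{ij})| \le C|x|^{-\varepsilon - k}$ for $k \le m-1$ with the analogous H\"older bound, where $\bar a^{ij} := F_{M_{ij}}(D^2v)$ is constant.

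Next, freeze the coefficients and move the perturbation into the RHS, writing $\bar a^{ij} D_{ij}w = g(x)$ with $g(x) := \tilde f(x) - (a^{ij}(x) - \bar a^{ij}) D_{ij}w(x)$. Since $\varepsilon < 1/2$ and $\zeta > 2$, we have $2\varepsilon < 1 < \zeta$, so the coefficient-error piece is the slowest-decaying part; a direct computation using the decay of $a^{ij} - \bar a^{ij}$, $D^2w$, $\tilde f$, and their derivatives yields $|D^k g(x)| \le C|x|^{-2\varepsilon - k}$ for $k \le m-1$, with matching H\"older estimate. A linear change of variables then reduces $\bar a^{ij}$ to $\delta^{ij}$, so $\Delta w = g$ outside a compact set.

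Then perform a scaling argument. For each $x_0$ with $|x_0| = R > 2R_1$, define $\hat w(y) := w(x_0 + (R/4)y)$ on $|y| \le 2$, so $\Delta \hat w = \hat g$ with $\hat g(y) := (R/4)^2 g(x_0 + (R/4)y)$ and $\|\hat g\|_{C^\alpha(\overline B_2)} \le CR^{2-2\varepsilon}$ (together with the analogous bounds on $D^k \hat g$ for $k \le m-1$). Decompose $\hat w = \hat w^{(p)} + \hat w^{(h)}$ on $B_2$, where $\hat w^{(p)}$ solves the Dirichlet problem $\Delta \hat w^{(p)} = \hat g$ with zero boundary data, and $\hat w^{(h)}$ is harmonic on $B_2$. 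Calder\'on--Zygmund and Schauder give $\|\hat w^{(p)}\|_{C^{2,\alpha}(\overline B_1)} \le CR^{2-2\varepsilon}$. To bound $\hat w^{(h)}$, exploit the global exterior structure of $w$: because the quadratic leading part has already been absorbed into $v$, the harmonic growth modes that survive are at most linear plus decaying multipoles, and the decomposition argument combined with the improvement on $\hat w^{(p)}$ gives $\|\hat w^{(h)}\|_{L^\infty(B_1)} \le CR^{2-2\varepsilon}$. Translating back yields $|w(x_0)| \le CR^{2-2\varepsilon}$. The bounds on $D^k w$ for $k = 1, 2$ follow from Schauder applied to $\hat w$ with the improved $L^\infty$ bound; for $k = 3, \dots, m+1$, differentiate the linearized equation to get $\bar a^{ij} D_{ij}(\partial_e w) = \partial_e g$ with the correspondingly improved decay of $\partial_e g$, and repeat the scaling-and-Schauder step. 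The H\"older bound on $D^{m+1}w$ comes from the $C^\alpha$ estimate at the final stage.

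The main obstacle is the harmonic remainder $\hat w^{(h)}$: a priori only the hypothesis $|w| \le C|x|^{2-\varepsilon}$ is available, which gives $\|\hat w^{(h)}\|_{L^\infty(B_2)} \le CR^{2-\varepsilon}$ — worse than the target $R^{2-2\varepsilon}$. Closing this gap requires either an explicit spherical-harmonic expansion argument (showing that only admissible, slowly growing modes appear in the exterior, and these are absorbed into $v$, as in \cite{BLZ}), or a barrier construction using $|x|^{2-2\varepsilon}$ — whose Laplacian equals $(2-2\varepsilon)(n-2\varepsilon)|x|^{-2\varepsilon}$, matching the decay rate of $g$ — together with the maximum principle on suitable annuli. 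This is the essential new input; once it is established at one scale, the scale-invariance of the rescaling makes the remaining estimates routine.
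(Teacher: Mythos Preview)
Your linearization and coefficient-freezing steps are correct and match the paper up to the point where you obtain $\bar a^{ij}D_{ij}w=g$ with $|g(x)|\le C|x|^{-2\varepsilon}$. The gap is exactly where you put it: the harmonic remainder. Your local decomposition $\hat w=\hat w^{(p)}+\hat w^{(h)}$ on each rescaled ball cannot close, because the only control on $\hat w^{(h)}$ comes from $\|\hat w\|_{L^\infty(B_2)}\le CR^{2-\varepsilon}$, and there is no mechanism on a single ball to beat that. Your proposed barrier $|x|^{2-2\varepsilon}$ fails as stated: you would need $w-C_0|x|^{2-2\varepsilon}\le 0$, but this function is superharmonic (not subharmonic), so the maximum principle on an annulus $B_R\setminus B_{2R_1}$ only gives a \emph{lower} bound; meanwhile on $\partial B_R$ you only know $w\le CR^{2-\varepsilon}$, which dominates $R^{2-2\varepsilon}$ as $R\to\infty$, so neither direction closes.

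The paper sidesteps this entirely by differentiating \emph{twice} before comparing. Setting $h_1=\partial_{kl}u$ and linearizing, one obtains $F_{M_{ij}}(D^2v)D_{ij}h_1=f_2$ with $|f_2|\le C|x|^{-2-2\varepsilon}$. The key point is that $h_1-D_{kl}v=\partial_{kl}w\to 0$ at infinity by hypothesis, so after subtracting the global Newtonian-potential particular solution $h_2$ (built on the whole exterior, not on each rescaled ball), the remainder $h_1-D_{kl}v-h_2$ is harmonic on $\mathbb R^n\setminus B_{2R_1}$ \emph{and tends to $0$ at infinity}. Now the barrier $|x|^{2-n}$ works immediately, giving $|h_1-D_{kl}v|\le C|x|^{-2\varepsilon}$, i.e.\ $|D^2w|\le C|x|^{-2\varepsilon}$; the lower-order bounds then follow by integration. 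Working at the Hessian level is what makes the harmonic piece decay rather than grow, and this is the idea your argument is missing. (Your spherical-harmonic alternative could also be made to work, but only if you first build a global particular solution on the exterior and then use the growth bound $|w|\le C|x|^{2-\varepsilon}$ to kill modes of degree $\ge 2$; the local version on each $B_2$ cannot see this.)
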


\begin{proof}[Proof.]

Applying $\partial_k$ to   equation (\ref{temp-5}) and we obtain \begin{equation}\label{temp-linearized}
\hat{a_{ij}}D_{ij}(\partial_ku(x))=\partial_kf(x),
\end{equation}
where $\hat{a_{ij}}(x):=F_{M_{ij}}(D^2u(x)).
$

Then  this linearized equation is also uniformly elliptic with the coefficients satisfies (from the assumptions)$$
|\hat{a_{ij}}(x)-F_{M_{ij}}(A)|\leq ||D F||_{C^0(D)}\cdot \dfrac{C}{|x|^{\varepsilon}},\quad\text{and}\quad
\left| D \hat{a _ { i j }} ( x ) \right| \leq \frac { C } { | x | ^ { 1 + \varepsilon } } , \quad | x | > R _ { 1 }.
$$
Also, for the $\alpha$ in condition,  we have \footnote[2]{For simplicity, we use $a_{ij}$ stands for this $\hat{a_{ij}}$. Other linearize processes are similar to this.} $$
\frac { \left| D a _ { i j } \left( x _ { 1 } \right) - D a _ { i j } \left( x _ { 2 } \right) \right| } { \left| x _ { 1 } - x _ { 2 } \right| ^ { \alpha } } \leq C||D^2F||_{C^0(D)}\cdot  \left| x _ { 1 } \right| ^ { - 1 - \varepsilon - \alpha } , \quad \left| x _ { 1 } \right| > 2 R _ { 1 } , \quad x _ { 2 } \in B _ { \left| x _ { 1 } \right| / 2 } \left( x _ { 1 } \right).
$$

Apply $\partial_l$ to the (\ref{temp-linearized}) and set $h _ { 1 } = \partial _ { k l } u$ we further obtain$$
F_{M_{ij},M_{qr}}(D^2u)D_{ijk}uD_{qrl}u+
F_{M_{ij}}(D^2u)D_{ij}h_1=\partial_{kl}f(x).
$$
Due to the Hessian matrix converge to $D^2v$ (positive constant matrix) at infinity with H\"older speed $F_{M_{ij}}(D^2u)\rightarrow F_{M_{ij}}(D^2v)\ as\ |x|\rightarrow\infty$. The extremal matrix makes the operator also uniformly elliptic, hence the Green's function of $F_{M_{ij}}(D^2v)D_{ij}u$ exists and equivalence to  the one of Laplacian. Then we can write $$
F_{M_{ij}}(D^2v)D_{ij} h_1=f_2:=\partial_{kl}f-F_{M_{ij},M_{qr}}(D^2u)D_{ijk}uD_{qrl}u
-(F_{M_{ij}}(D^2u)-F_{M_{ij}}(D^2v))D_{ij}h_1.
$$

From the assumptions we stated, it follows that for the $\alpha\in (0,1)$ in the assumption,
\begin{equation*}
\left|f_{2}(x)\right| \leq C|x|^{-2-2 \varepsilon},\ \forall|x| \geq 2 R_{1},
\end{equation*}
\begin{equation*}
\frac{\left|f_{2}\left(x_{1}\right)-f_{2}\left(x_{2}\right)\right|}{\left|x_{1}-x_{2}\right|^{\alpha}} \leq \frac{C}{\left|x_{1}\right|^{2+2 \varepsilon+\alpha}}, \quad x_{2} \in B_{\left|x_{1}\right| / 2}\left(x_{1}\right),\left|x_{1}\right| \geq 2 R_{1}.
\end{equation*}

Note that this step demanded 3-order derivative of $f$. This is essential for this method due to it provides the H\"older continuity of $f_2$, which is used in Schauder estimate.

Note that $h_1\rightarrow D_{kl}v$ for fixed $k,l\in\{1,2,\cdots,n\}$, $h_1$ satisfies the following ``part'' Dirichlet problem
$$
\left\{
\begin{array}{cc}
  F_{M_{ij}}(D^2v) D_{ij}h_1=f_2, & in\ \mathbb{R}^n\setminus B_{R_1},\\
  h_1\rightarrow D_{kl}v, & as\ |x|\rightarrow\infty.\\
\end{array}
\right.
$$

The main target of this bootstrap lemma is to obtain a finer estimate on the convergence speed of Hessian matrix. From $O(|x|^{-\varepsilon})$ enhanced into $O(|x|^{-2\varepsilon})$ as long as $2\varepsilon<1$. In order to do this, we use a Green's function to change the equation into homogeneous situation.

Let $$h_2(x):=\int_{\mathbb{R}^n\setminus B_{R_1}}G_{F_{M_{ij}}(D^2v)}(x,y)f_2(y)\d y,$$ where $G_{F_{M_{ij}}(D^2v)}(x,y)$ is the distribution solution (Green's function) of $$
\left\{
\begin{array}{ccc}
  F_{M_{ij}}(D^2v)D_{ij}u(y)=\delta_x & \text{in }\mathbb{R}^n\\
  u(y)\rightarrow 0 & \text{as }|y|\rightarrow\infty\\
\end{array}
\right..
$$
The existence of such a Green's function and equivalence to fundamental solution is well-known, see \cite{Equivalence,Equivalence2} for example.
Then  $F_{M_{ij}}(D^2v) D_{ij}h_2=f_2$ and satisfies $h_2\rightarrow0\ as\ |x|\rightarrow\infty$.

Moreover, argue as in the proof of  Lemma \ref{existence}, separate $\mathbb{R}^n$ into three pieces $E_1, E_2, E_3$ and we obtain
$$
|D^jh_2(x)|\leq C | x | ^ { - 2 \varepsilon - j } , \quad | x | > 2 R _ { 1 } ,\quad\text{for } j = 0,1.
$$
Argue as in the proof of Lemma \ref{roughestimate_2}, we also obtain
\begin{equation*}
\frac{\left|D^{2} h_{2}\left(x_{1}\right)-D^{2} h_{2}\left(x_{2}\right)\right|}{\left|x_{1}-x_{2}\right|^{\alpha}} \leq \frac{C}{\left|x_{1}\right|^{2+2 \varepsilon+\alpha}}, \quad x_{2} \in B_{\frac{\left|x_{1}\right|}{2}}\left(x_{1}\right),\left|x_{1}\right|>2 R_{1}.
\end{equation*}
Indeed, for each $x _ { 0 } \in \mathbb { R } ^ { n } \backslash B _ { 2 R _ { 1 } } ,$ let $R = \left| x _ { 0 } \right| ,$ we set $$
h _ { 2 , R } ( y ) = h _ { 2 } \left( x _ { 0 } + \frac { R } { 4 } y \right) , \quad f _ { 2 , R } ( y ) = \frac { R ^ { 2 } } { 16 } f _ { 2 } \left( x _ { 0 } + \frac { R } { 4 } y \right) , \quad | y | \leq 2.
$$
Then we have$$
\left\| h _ { 2 , R } \right\| _ { C^0 \left(\overline{ B _ { 1 }} \right) } =||h_2||_{C^0
(\overline{B_{\frac{|x_0|}{4}}(x_0)})
}\leq C R ^ { - 2 \varepsilon },\quad\text{and}\quad
\left\| f _ { 2 , R } \right\| _ { C ^ { \alpha } \left( \overline{B _ { 1 }} \right) } \leq C R ^ { - 2 \varepsilon }.
$$
Hence Schauder Estimate tells us$$
\left\| h _ { 2 , R } \right\| _ { C ^ { 2 , \alpha } \left(
\overline{B _ { 1 }} \right) } \leq C \left( \left\| h _ { 2 , R } \right\| _ { L ^ { \infty } \left( B _ { 2 } \right) } + \left\| f _ { 2 , R } \right\| _ { C ^ { \alpha } \left(\overline{ B _ { 2 }} \right) } \right) \leq C R ^ { - 2 \varepsilon }.
$$
Meaning that we have obtained the desired result for $h_2$ i.e.
\begin{equation*}
\left|D^{j} h_{2}(x)\right| \leq C|x|^{-2 \varepsilon-j}, \quad \quad j=0,1,2, \quad|x|>2 R_{1},
\end{equation*}
and \begin{equation*}
\frac{\left|D^{2} h_{2}\left(x_{1}\right)-D^{2} h_{2}\left(x_{2}\right)\right|}{\left|x_{1}-x_{2}\right|^{\alpha}} \leq \frac{C}{\left|x_{1}\right|^{2}+2 \varepsilon+\alpha}, \quad x_{2} \in B_{\frac{|x_{1} |}{2}}\left(x_{1}\right),\left|x_{1}\right|>2 R_{1}.
\end{equation*}

Now we only need to study the difference between $h_1-D_{kl}v$ and $h_2$. By the linearity of Laplacian operator we have$$
\left\{
\begin{array}{cc}
  F_{M_{ij}}(D^2v) D_{ij}(h_1-D_{kl}v-h_2)=0, & in\ \mathbb{R}^n\setminus B_{2R_1},\\
  h_1-D_{kl}v-h_2\rightarrow 0, & as\ |x|\rightarrow\infty.\\
\end{array}
\right.
$$
By taking $|x|^{2-n}$ as a barrier function, it immediately follows that for some constant C$$
\left| h _ { 1 } ( x ) - D_ { k l }v - h _ { 2 } ( x ) \right| \leq C | x | ^ { 2 - n } , \quad | x | > 2 R _ { 1 }.
$$
Due to  $|x|^{2-n}$ converge faster than $|x|^{-2\varepsilon}$, triangle inequality tells us$$
\left| h _ { 1 } ( x ) - D _ { k l }v \right| \leq C | x | ^ { - 2 \varepsilon } , \quad | x | > 2 R _ { 1 }.
$$
Then Newton-Leibnitz formula tells us $$
\left| D ^ { j } w ( x ) \right| \leq C | x | ^ { 2 - j - 2 \varepsilon } , \quad | x | > 2 R _ { 1 } , \quad j = 0,1,2 .
$$
Higher regularity (when $m$ is larger than 3), the result follows by taking more derivatives.
\end{proof}

\begin{proof}[Proof of Theorem \ref{result2}]
  The theorem follows exactly as proved in \cite{BLZ}, which is based on Level Set Method and bootstrap argument we proved earlier. Since the proof is slightly different, we provide the details here again.

From the boundedness of Hessian matrix, we can verify that the conditions of Theorem \ref{LimitofHessian} still holds. Meaning that
there exist $A \in \mathcal{A}_3, \alpha>0, R_{2} \gg 1$ sufficiently large such that
\begin{equation*}
\left|D^{2} u(x)-A\right| \leq \frac{C}{|x|^{\alpha}}, \quad \forall|x| \geq R_{2}.
\end{equation*}
This enable us to apply Lemma \ref{roughestimate_2} which gives us the initial point of doing iteration as in Lemma \ref{bootstrap} to obtain a faster decay speed. We can always do this finite times till the condition of $0<\varepsilon<\dfrac{1}{2}$ in Lemma \ref{bootstrap} fails to hold. As in Lemma \ref{roughestimate_2}, we denote $w(x)=u(x)-\frac{1}{2} x^{T} A x$, where $A$ is the limit of $D^2u$ at infinity, which is provided by Theorems \ref{LimitofHessian} and \ref{HolderRegularity_ofHessian} in Section \ref{ConvergenceOfHessian}.

  Let $k_{0}$ be the positive integer such that $2^{k_{0}} \varepsilon<1$ and $2^{k_{0}+1} \varepsilon>1$ (we choose $\varepsilon$ smaller if
necessary to make both inequalities hold). Let $\varepsilon_{1}=2^{k_{0}} \varepsilon,$ clearly we have $1<2 \varepsilon_{1}<2$ .
Applying Lemma \ref{roughestimate_2} $k_{0}$ times we have
\begin{equation*}
\left|D^{k} w(x)\right| \leq C|x|^{2-\varepsilon_{1}-k}, \quad k=0, \ldots, m+1, \quad|x|>2 R_{1},
\end{equation*}
and
\begin{equation*}
\frac{\left|D^{m+1} w\left(x_{1}\right)-D^{m+1} w\left(x_{2}\right)\right|}{\left|x_{1}-x_{2}\right|^{\alpha}} \leq C\left|x_{1}\right|^{1-m-\varepsilon_{1}-\alpha},\left|x_{1}\right|>2 R_{1}, x_{2} \in B_{\left|x_{1}\right| / 2}\left(x_{1}\right).
\end{equation*}

Let $h_{1}$ and $f_{2}$ be the same as in Lemma \ref{bootstrap}. Then we have

\begin{equation*}
\left|f_{2}(x)\right| \leq C|x|^{1-m-2 \varepsilon_{1}}+C|x|^{-2-\zeta} \quad|x| \geq 2 R_{1},
\end{equation*}
and \begin{equation*}
\frac{\left|f_{2}\left(x_{1}\right)-f_{2}\left(x_{2}\right)\right|}{\left|x_{1}-x_{2}\right|^{\alpha}} \leq \frac{C}{\left|x_{1}\right|^{m-1+2 \varepsilon_{1}+\alpha}}+\frac{C}{|x|^{\zeta+2+\alpha}}, \quad\left|x_{1}\right| \geq 2 R_{1},  x_{2} \in B_{\left|x_{1}\right| / 2}\left(x_{1}\right).
\end{equation*}
Constructing $h_{2}$ as in the proof of Lemma \ref{bootstrap}, we have
\begin{equation*}
\left|D^{j} h_{2}(x)\right| \leq C|x|^{-2 \varepsilon_{1}-j}, \quad j=0,1,2,|x|>2 R_{1},
\end{equation*}
and
\begin{equation*}
\frac{\left|D^{2} h_{2}\left(x_{1}\right)-D^{2} h_{2}\left(x_{2}\right)\right|}{\left|x_{1}-x_{2}\right|^{\alpha}} \leq \frac{C}{\left|x_{1}\right|^{2+2 \varepsilon_{1}+\alpha}}, x_{2} \in B_{\frac{\left|x_{1}\right|}{2}}\left(x_{1}\right),\left|x_{1}\right|>2 R_{1}.
\end{equation*}
Similar to the proof of Lemma \ref{bootstrap}, it follows from the estimate above that
\begin{equation*}
\left|h_{1}(x)-h_{2}(x)\right| \leq C|x|^{2-n}, \quad|x|>2 R_{1}.
\end{equation*}
Since $2 \varepsilon_{1}>1$,
\begin{equation*}
\left|h_{1}(x)\right| \leq\left|h_{2}(x)\right|+C|x|^{2-n} \leq C|x|^{-1}.
\end{equation*}
By Theorem 4 of \cite{20}, for any $i=1,2,\cdots,n$, $ \partial_{i} w(x) \rightarrow c_{i}$ for some $c_{i} \in \mathbb{R}$ as $|x| \rightarrow \infty .$ Let $\beta \in \mathbb{R}^{n}$ be the limit
of $D w$ and $w_{1}(x)=w(x)-\beta \cdot x .$ The equation satisfied by $\partial_e w_{1}$ can be written as (for $e \in \mathbb{S}^{n-1} )$
\begin{equation*}
a_{i j} \partial_{i j}\left(\partial_{e} w_{1}\right)=\partial_{e} f_{v},\text{ where }a_{ij}=D_{M_{ij}}F_{\tau}(D^2u)\rightarrow D_{M_{ij}}F_{\tau}(A).
\end{equation*}
Write the equation into the perturbation of elliptic equation with constant coefficients, then from the estimate above we have
\begin{equation*}
D_{M_{ij}}F_{\tau}(A)\left(\partial_{e} w_{1}\right)=f_{3} :=\partial_{e} f_{v}-\left(D_{M_{ij}}F_{\tau}(D^2u)-D_{M_{ij}}F_{\tau}(A)\right) \partial_{i j e} w_{1}, \quad|x|>2 R_{1},
\end{equation*}
\begin{equation*}
\left|f_{3}(x)\right| \leq C\left(|x|^{-\zeta-1}+|x|^{-1-2 \varepsilon_{1}}\right) \leq C|x|^{-1-2 \varepsilon_{1}},|x|>2 R_{1},
\end{equation*}
and
\begin{equation*}
\frac{\left|f_{3}\left(x_{1}\right)-f_{3}\left(x_{2}\right)\right|}{\left|x_{1}-x_{2}\right|^{\alpha}} \leq C\left|x_{1}\right|^{-1-2 \varepsilon_{1}-\alpha},\left|x_{1}\right|>2 R_{1}, x_{2} \in B_{\left|x_{1}\right| / 2}\left(x_{1}\right).
\end{equation*}

Let $h_{4}$ solve $D_{M_{ij}}F_{\tau}(A) h_{4}=f_{3}$ and the construction of $h_{4}$ is similar to that of $h_{2}$ in Lemma \ref{bootstrap} . Then we have
\begin{equation*}
\left|D^{j} h_{4}(x)\right| \leq C|x|^{1-2 \varepsilon_{1}-j},|x|>2 R_{1}, j=0,1,2,
\end{equation*}
and
\begin{equation*}
\frac{\left|D^{2} h_{4}\left(x_{1}\right)-D^{2} h_{4}\left(x_{2}\right)\right|}{\left|x_{1}-x_{2}\right|^{\alpha}} \leq C\left|x_{1}\right|^{-1-2 \varepsilon_{1}-\alpha},\left|x_{1}\right|>2 R_{1}, x_{2} \in B_{\left|x_{1}\right| / 2}\left(x_{1}\right).
\end{equation*}
Since $\partial_{e} w_{1}-h_{4} \rightarrow 0$ at infinity, we have
\begin{equation*}
\left|\partial_{e} w_{1}(x)-h_{4}(x)\right| \leq C|x|^{2-n}, \quad|x|>R_{1}.
\end{equation*}
Therefore we have obtained $\left|Dw_{1}(x)\right| \leq C|x|^{1-2 \varepsilon_{1}}$ on $|x|>R_{1} .$ Using fundamental theorem
of calculus it tells us
\begin{equation*}
\left|w_{1}(x)\right| \leq C|x|^{2-2 \varepsilon_{1}}, \quad j=0,1, \quad|x|>R_{1}.
\end{equation*}
Lemma \ref{roughestimate_2} applied to $w_1$ gives
\begin{equation*}
\left|D^{j} w_{1}(x)\right| \leq C|x|^{2-j-2 \varepsilon_{1}}, \quad j=0 . ., m+1.
\end{equation*}

This provides us a finer estimate on $f_3$ i.e.
\begin{equation*}
\left|f_{3}(x)\right| \leq C|x|^{-\zeta-1}+C|x|^{-1-4 \varepsilon_{1}},|x|>2 R_{1},
\end{equation*}
and
\begin{equation*}
\frac{\left|f_{3}\left(x_{1}\right)-f_{3}\left(x_{2}\right)\right|}{\left|x_{1}-x_{2}\right|^{\alpha}} \leq C\left(\left|x_{1}\right|^{-\zeta-1-\alpha}+\left|x_{1}\right|^{-1-4 \varepsilon_{1}-\alpha}\right),\left|x_{1}\right|>2 R_{1}, x_{2} \in B_{\left|x_{1}\right| / 2}\left(x_{1}\right).
\end{equation*}
As a consequence, the new estimate of $h_4$ is
\begin{equation*}
\left|h_{4}(x)\right| \leq C\left(|x|^{1-\zeta}+|x|^{1-4 \varepsilon_{1}}\right), \quad|x|>2 R_{1},
\end{equation*}
and as always, triangle inequality tells us
\begin{equation*}
\left|D w_{1}(x)\right| \leq C\left(|x|^{2-n}+|x|^{1-4 \varepsilon_{1}}\right) \leq C|x|^{-1}, \quad|x|>2 R_{1}.
\end{equation*}
By Theorem 4 of \cite{20} again, there exists some constant $\gamma$ such that $w_{1} \rightarrow \gamma$ at infinity. Let
\begin{equation*}
w_{2}(x)=w_1(x)-\gamma=w(x)-\beta \cdot x-\gamma
\end{equation*}
Then we have $\left|w_{2}(x)\right| \leq C$ for $|x|>2 R_{1} .$ Lemma \ref{roughestimate_2} applied to $w_{2}$ gives
\begin{equation*}
\left|D^{k} w_{2}(x)\right| \leq C|x|^{-k}, \quad k=0, \ldots, m+1, \quad|x|>2 R_{1}.
\end{equation*}
The equation satisfied by $w_{2}$ can be written as
\begin{equation*}
F_{\tau}(A+D^2w_2(x))=f.
\end{equation*}
Taking the difference between this equation and $F_{\tau}(A)=f(\infty)$ we have
\begin{equation*}
\widetilde{a}_{i j} \partial_{i j} w_{2}=f-f(\infty), \quad|x|>2 R_{1},
\end{equation*}
where $$
\widetilde{a}_{ij}(x):=\int_0^1D_{M_{ij}}F_{\tau}(A+tD^2w_2(x))\d t.
$$
Note that the convergence of Hessian (Theorem \ref{LimitofHessian}) tells us $\bolangxian{a_{ij}}$ satisfy
\begin{equation*}
\left|D^{k}\left(\widetilde{a}_{i j}(x)-\widetilde{a}_{i j}(\infty)\right)\right| \leq C|x|^{-2-k}, \quad|x|>2 R_{1}, \quad k=0,1.
\end{equation*}
Write it into the perturbed situation of constant coefficients operator again
\begin{equation*}
\widetilde{a}_{ij}(\infty)D_{ij}w_{2}=f_{4} :=f_{v}-1-\left(\widetilde{a}_{i j}-\widetilde{a}_{ij}(\infty)\right) \partial_{i j} w_{2}, \quad|x|>2 R_{1},
\end{equation*}
and we have the following estimates
\begin{equation*}
\left|f_{4}(x)\right| \leq C\left(|x|^{-\zeta}+|x|^{-4}\right), \quad|x|>2 R_{1},
\end{equation*}
and
$$
\frac{\left|f_{4}\left(x_{1}\right)-f_{4}\left(x_{2}\right)\right|}{\left|x_{1}-x_{2}\right|^{\alpha}} \leq C\left(\left|x_{1}\right|^{-\zeta-\alpha}+\left|x_{1}\right|^{-4-\alpha}\right), \quad\left|x_{1}\right|>2 R_{1}, x_{2} \in B_{\left|x_{1}\right| / 2}\left(x_{1}\right).
$$
Let $h_{5}$ be defined similar to $h_{2} $, which makes $h_{5}$ solves $\widetilde{a}_{ij}(\infty) h_{5}=f_{4}$ in $\mathbb{R}^{n} \backslash B_{2 R_{1}}$ and satisfies
\begin{equation*}
\left|h_{5}(x)\right| \leq C\left(|x|^{2-\zeta}+|x|^{-2}\right).
\end{equation*}
As before we have
\begin{equation*}
\left|w_{2}(x)-h_{5}(x)\right| \leq C|x|^{2-n}, \quad|x|>2 R_{1},
\end{equation*}
which gives
\begin{equation}\label{2.30}
\left|w_{2}(x)\right| \leq C\left(|x|^{2-n}+|x|^{2-\zeta}+|x|^{-2}\right), \quad|x|>2 R_{1}.
\end{equation}
If $|x|^{-2}>|x|^{2-n}+|x|^{2-\zeta}$ we can apply the same argument as above finite times to remove
the $|x|^{-2}$ from (\ref{2.30}). Eventually by Lemma \ref{roughestimate_2} we have this result.
\end{proof}

\section{Interior Estimates}\label{InteriorEstimates}

From the proof of Theorem \ref{LimitofHessian}, we see that as long as (\ref{linear-of-X}) holds, then all the asymptotic behavior also holds. In this section, we will reduce the assumption on Hessian (\ref{boundedHessian}) into assumption on gradient (\ref{Condition-LinearGrowth}) or assumption on solution itself (\ref{Condition-QuadraticGrowth}).

This section is organized in the following order. First, we  prove that condition (\ref{Condition-LinearGrowth}) provides us the desired equivalence (\ref{linear-of-X}) immediately. Second, based on an important gradient estimate theorem by Y.Y.Li \cite{Estimate-YYLi}, we can furthermore reduce the condition (\ref{Condition-LinearGrowth}) by (\ref{Condition-QuadraticGrowth}). These two parts provide us Theorems \ref{lineargrowth_Result} and \ref{quadraticgrowth_Result} already. Eventually, by the compactness method developed by McGonagle-Song-Yuan \cite{CompactnessMethod}, we see that the boundeness of Hessian holds true under a weaker assumption of right hand side function than in Theorems \ref{lineargrowth_Result} and \ref{quadraticgrowth_Result}.

\begin{lemma}[Linear Growth]\label{lineargrowth}
Let $(\widetilde{x},\widetilde{u})$ be the function defined as in (\ref{LegendreTransform}), and $u$ satisfy condition (\ref{Condition-LinearGrowth}) for some constant $C_0$.  Then there exist $\delta=\delta(n,C_0,\tau),\ \displaystyle M=M(\tau,\max_{x\in\overline{B_1}}|Du(x)|)>0$ such that $$
  |(\bolangxian{x}-D\bolangxian{u}(\bolangxian{x}))|\geq \delta|\bolangxian{x}|,\quad\forall|\bolangxian{x}|\geq M.
  $$
\end{lemma}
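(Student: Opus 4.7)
The plan is to reduce the desired inequality to a direct consequence of the definition of the Legendre transform together with the linear growth hypothesis on $Du$. First I would observe that by (\ref{LegendreTransform}) one has $D_{\widetilde x}\widetilde u(\widetilde x)=\widetilde x-2b\,D_{\widetilde x}v(\widetilde x)=\widetilde x-2bx$, so that the quantity under control is simply
\begin{equation*}
\widetilde x-D\widetilde u(\widetilde x)=2b\,x.
\end{equation*}
Thus the lemma is equivalent to producing a constant $\delta>0$ and a threshold $M$ with $2b|x|\geq\delta|\widetilde x|$ whenever $|\widetilde x|\geq M$; equivalently, it suffices to prove a linear upper bound $|\widetilde x|\leq C_\tau|x|$ for $|x|$ large.

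Next I would bound $|\widetilde x|$ in terms of $|x|$ using (\ref{Condition-LinearGrowth}). Since $\widetilde x=Du(x)+(a+b)x$, the triangle inequality and $|Du(x)|\leq C_0(1+|x|)$ yield, for every $x\in\mathbb R^n$,
\begin{equation*}
|\widetilde x|\ \leq\ C_0(1+|x|)+(a+b)|x|\ =\ C_0+(C_0+a+b)|x|.
\end{equation*}
For $x\in\mathbb R^n\setminus\overline{B_1}$ (so that $|x|\geq 1$), this simplifies to $|\widetilde x|\leq (2C_0+a+b)|x|$.

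I would then choose the threshold $M$ to guarantee that any $\widetilde x$ with $|\widetilde x|\geq M$ corresponds to an $x$ lying outside $B_1$, so that the clean linear bound of the previous step applies. For $x\in\overline{B_1}$ one has $|\widetilde x|\leq \max_{\overline{B_1}}|Du|+(a+b)$, so setting $M:=\max_{\overline{B_1}}|Du|+(a+b)+1$ ensures that $|\widetilde x|\geq M$ forces $|x|>1$. Combining this with the bound above then gives $|x|\geq |\widetilde x|/(2C_0+a+b)$, and hence
\begin{equation*}
|\widetilde x-D\widetilde u(\widetilde x)|\ =\ 2b|x|\ \geq\ \frac{2b}{2C_0+a+b}\,|\widetilde x|,
\end{equation*}
so $\delta:=2b/(2C_0+a+b)$ (which depends only on $n$, $C_0$, $\tau$ through $a,b$) does the job.

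There is no real analytic obstacle in this lemma; the only delicate point is the book-keeping that lets $M$ depend on $\max_{\overline{B_1}}|Du|$ rather than on a global quantity, and this is handled by isolating the unit ball in the bound for $|\widetilde x|$ as above. The lemma should therefore follow cleanly, and it supplies exactly the linear equivalence $|\widetilde x|\sim|x|$ that the proof of Theorem \ref{LimitofHessian} used through (\ref{linear-of-X}), thereby allowing the hypothesis (\ref{boundedHessian}) to be replaced by (\ref{Condition-LinearGrowth}) in the subsequent arguments.
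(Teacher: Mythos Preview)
Your proposal is correct and follows essentially the same approach as the paper: both reduce the claim to the identity $\widetilde x-D\widetilde u(\widetilde x)=2bx$, bound $|\widetilde x|\leq (C_0+a+b)(1+|x|)$ via (\ref{Condition-LinearGrowth}), and choose $M$ in terms of $\max_{\overline{B_1}}|Du|$ so that $|\widetilde x|\geq M$ forces $|x|\geq 1$. Your write-up is in fact slightly cleaner than the paper's, which introduces an inessential factor of $100$ when selecting $\delta$, but the argument is the same.
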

\begin{proof}[Proof.]

Firstly, we prove that $$
|2bx|\geq \delta|Du(x)+(a+b)x|,\quad\forall |x|\geq  1.
$$
In fact,  by triangle inequality $$
|Du(x)+(a+b)x|
\leq |Du(x)|+(a+b)|x|\leq (C_0+a+b)(|x|+1),\quad\forall x\in\mathbb{R}^n.
$$
Thus by taking a sufficiently small $\delta>0$ such that $$
(C_0+a+b)\delta<\dfrac{2b}{100},\quad\text{hence}
\quad
2b|x|\geq 100\delta(C_0+a+b)|x|.
$$
Thus as long as $|x|>\dfrac{1}{99}$, we have $$
100\delta(C_0+a+b)|x|
\geq \delta(C_0+a+b)(|x|+1).
$$
Hence there exists a $\delta>0$ such that  $$
|2bx|\geq \delta|Du(x)+(a+b)x|,\quad\forall |x|\geq  1.
$$

Secondly, we prove the following result, then the result follows immediately,
\begin{equation}\label{temp-14}
\{x:|\widetilde{x}|=|D u(x)+(a+b) x| \geq M\} \subset\{x:|x| \geq 1\}.
\end{equation}
In fact, for any given $u\in C^4(\mathbb{R}^n)$, $|Du(x)|$ is bounded in $B_1$. Let's denote $$
\bolangxian{M}:=\max_{x\in\overline{B_1}}|Du(x)|<\infty,\quad\text{depending on $u$.}
$$
Then for any $|x|\leq 1$, $$
|Du(x)+(a+b)x|\leq
|Du(x)|+(a+b)|x|\leq \bolangxian{M}+a+b=:M.
$$
This tells us (\ref{temp-14}) through an argue by contradiction.
\end{proof}

Now we consider solution of (\ref{equation_NONConstant}) with $D^2u>(-a+b)I$ and we assume that $f$ satisfies conditions (\ref{0-orderCondition}) and (\ref{Holder-Condition}) for some $f(\infty)<0,\ \alpha\in(0,1),\ \beta>1,\ \gamma>0$. The following gradient estimate by Y.Y.Li \cite{Estimate-YYLi} plays an important role in our proof.
\begin{theorem}[Interior Gradient Estimate]\label{estimate_YYLi}
  Consider the following equation $$
f\left(\lambda_{1}, \ldots, \lambda_{n}\right)=\psi(x, u(x), Du(x)),
$$
where $\lambda_i$ is the eigenvalue of $D^2u(x)$.
Suppose $f$
\begin{enumerate}[(A)]
  \item is smooth and defined in an open convex cone $\Gamma \subset R^{n},$
which is different from $R^{n}$ , with vertex at the origin, containing the positive
cone $\left\{\lambda \in R^{n} : \text { each component } \lambda_{i}>0\right\}$
\item satisfies the following in $\Gamma $
$$
\frac{\partial f}{\partial \lambda_{i}}>0 \quad \forall i,\quad \text{and  $f$ is a concave function.}
$$
\item is invariant under interchange of any two $\lambda_{i}$s.
\item  there exist $c_{0}, c_{1}>0,$ which depend on $\psi_{0},$ such that,
for any $\lambda \in \Gamma$ with $f(\lambda) \geqslant \psi_{0}, \lambda_{i} \leqslant 0$ $$
f_{\lambda_{i}} \geqslant c_{0} \sum_{j \neq i} \frac{\partial f}{\partial \lambda_{j}}+c_{1}, \qquad i=1, \ldots, n.
$$
\item \label{condition} satisfies $$
\lim _{\lambda \rightarrow 0 \atop \lambda \in \Gamma} f(\lambda)>-\infty,
$$
\end{enumerate}
and we assume RHS term satisfies $\psi(x, u, v)$ is assumed to be
$C^{1},$ satisfying $$
\psi_{u} \geqslant 0, \quad|\psi| \leqslant \psi_{1}, \quad|\nabla \psi| \leqslant M, \quad \psi \geqslant \psi_{0},
$$ for some positive constant
$M, \psi_{1}, \psi_{0}$
and
$$
\lim _{\lambda \rightarrow \lambda_{0} \atop \lambda \in \Gamma} f(\lambda)<\psi_{0}, \quad \forall \lambda_{0} \in \partial \Gamma.
$$
If $u \in C^{3}\left(\overline{B_{1}}\right)$ is a solution of $$
f\left(\lambda_{1}, \ldots, \lambda_{n}\right)=\psi(x, u(x), Du(x)),\quad\text{in}\ B_1,
$$
with $\lambda=\lambda\left(S_{u}, P\right) \in \Gamma, u<0$ on $\overline{B_1}$ and $u(0)=-u_{0}$.
Then there exists $C=C\left(n, f, \psi_{1}, M, \psi_{0}, u_{0}\right)$ such
that $|\nabla u(0)| \leqslant C .$
\end{theorem}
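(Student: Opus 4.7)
The plan is to use the classical Bernstein maximum-principle technique adapted to Hessian-type equations on a general symmetric cone $\Gamma$. I would set up an auxiliary test function of the form
$$
\Phi(x) = \log(1+|Du(x)|^2) + g(-u(x)) + h(|x|),
$$
where $g\in C^2$ is convex and increasing with $g',g''\ge 0$ (chosen to exploit that $-u>0$ on $\overline{B_1}$ provided by the hypothesis $u<0$), and $h$ is a cutoff that blows down to $-\infty$ on $\partial B_1$. Then $\Phi$ attains its maximum at some interior point $x_0\in B_1$, and because $\Phi(0)\le \Phi(x_0)$ and $\Phi(0)$ is controlled in terms of $u_0=-u(0)$, it suffices to bound $|Du(x_0)|$.

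At $x_0$ the first-order condition $D\Phi(x_0)=0$ yields the identity
$$
\frac{u_k u_{ki}}{1+|Du|^2}= \tfrac12 g'(-u)\,u_i - \tfrac12 h'(|x|)\frac{x_i}{|x|},
$$
and the matrix inequality $D^2\Phi(x_0)\le 0$, combined with ellipticity $F^{ij}:=\partial F/\partial M_{ij}>0$ (hypothesis (B)), yields $F^{ij}\Phi_{ij}(x_0)\le 0$. Expanding and using the once-differentiated equation $F^{ij}u_{ijk}=\psi_{x_k}+\psi_u u_k+\psi_{p_l}u_{lk}$, one obtains a schematic inequality in which the coercive term $2F^{ij}u_{ki}u_{kj}/(1+|Du|^2)$ plus $g''(-u)\,F^{ij}u_iu_j$ is bounded above by terms at most linear in $|Du|$ and of order $\sum F^{ii}|\lambda_i|$; the bounds $|\psi|\le\psi_1$, $|\nabla\psi|\le M$, and $\psi_u\ge 0$ are used throughout, and the term $g'(-u)F^{ij}u_{ij}=g'(-u)\psi$ is controlled directly.

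Rotating coordinates so that $D^2u$ is diagonal with eigenvalues $\lambda_1,\dots,\lambda_n$ (using symmetry hypothesis (C)), the coercive term becomes $\sum_i F^{ii}\lambda_i^2$. The decisive structural ingredient is hypothesis (D): for any index with $\lambda_i\le 0$ one has $F^{ii}\ge c_0\sum_{j\ne i}F^{jj}+c_1$, so the bad eigenvalue directions come with a coefficient dominating the remaining trace plus a positive $c_1$. Combined with hypothesis (E), which via concavity (B) gives a uniform lower bound $\sum F^{ii}\ge c>0$, this ensures $\sum F^{ii}\lambda_i^2$ dominates both $\sum F^{ii}$ and $\sum F^{ii}|\lambda_i|$ by Cauchy-Schwarz once $|Du(x_0)|$ is large. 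Balancing constants in $g$ and the cutoff $h$ and absorbing the linear errors then yields $|Du(x_0)|\le C(n,f,\psi_0,\psi_1,M,u_0)$, which in turn gives $|Du(0)|\le C$.

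The main obstacle will be the negative-eigenvalue regime where $D^2u$ escapes the positive cone: without hypothesis (D), the standard Bernstein computation breaks down because $\sum F^{ii}\lambda_i^2$ can degenerate along directions where the corresponding $F^{ii}$ is small. Threading the coercive bound from (D) through the algebra, while keeping track of how the constants $c_0,c_1$ combine with the Lipschitz data $M,\psi_1$ and the weight parameters in $g$ and $h$, is the delicate step; it demands careful bookkeeping but no genuinely new ideas beyond the classical maximum-principle scheme. The hypotheses (A), (C), (E) ensure that the computation remains valid up to the boundary of $\Gamma$, so no additional barrier is needed at the cone vertex.
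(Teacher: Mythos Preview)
The paper does not prove this theorem at all: it is quoted verbatim as a known result of Y.~Y.~Li \cite{Estimate-YYLi} and then applied as a black box in the proof of Theorem~\ref{gradientestimate}. So there is no ``paper's own proof'' to compare your proposal against.

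That said, your sketch is the correct and standard route, and it matches the strategy in Li's original paper: a Bernstein-type argument with a test function combining $\log(1+|Du|^2)$, a convex increasing function of $-u$, and a cutoff localizing to $B_1$, followed by the maximum-principle computation at an interior maximum. Your identification of hypothesis~(D) as the key structural input for handling negative eigenvalues is exactly right; without it the term $\sum F^{ii}\lambda_i^2$ need not dominate the error terms. The only caveat is that your description is a high-level outline rather than a complete proof --- the actual balancing of constants in $g$ and $h$ and the absorption argument require several pages of careful computation in the original paper --- but as a proof plan it is sound.
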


In order to write our equation into a suitable form that satisfies condition (\ref{condition}) of Theorem \ref{estimate_YYLi}, we see from  proof of Theorem \ref{LimitofHessian} that there exists $\delta>0$ such that
\begin{equation}\label{ExistenceOfDelta}
\lambda_{i}\left(D^{2} u\right)>-a+b+\delta.
\end{equation}
Thus we translate by setting
$$
v:=u+\dfrac{a-b-\delta}{2}|x|^2,$$ such that
$\lambda_i(D^2v)=\lambda_i(D^2u)+(a-b)-\delta>0.$
Then $v$ satisfies
\begin{equation}\label{temp-23}
G_{\tau}(\lambda(D^2v)):=\prod_{i=1}^n\dfrac{\lambda_i+\delta}{\lambda_i+\delta+2b}=
\exp\{\frac{2b}{\sqrt{a^2+1}}
f(x)\}=:g(x),\quad\forall\ x\in\mathbb{R}^n.
\end{equation}

\iffalse
Also, by a direct translation we can see that the result can be written into the following form $$
|\nabla v(0)|\leq g(||u||_{L^{\infty}(B_1)}),
$$
where g is a continuous and monotone function depending on $n,f,\psi_1,M,\psi_0,u_0$. Now we want to do scaling and obtain the following type of results $$
|\nabla v(0)|\leq g(\dfrac{||u||_{L^{\infty}(B_R)}}{R}),\quad\forall R\gg 1.
$$
However when we are dealing with non-constant RHS terms, it will also change along with the scaling. So we will fix the scaling size as $\dfrac{|x|}{2},\ |x|\gg1$ as usual and apply the unit size result similarly.

\fi

\begin{theorem}[Global Gradient Estimate]\label{gradientestimate}Let  $v$ be a smooth and convex solution of (\ref{temp-23}),
where $g$  satisfies conditions (\ref{0-orderCondition}) and (\ref{Holder-Condition}) for some $g(\infty)=\exp\{\dfrac{2b}{\sqrt{a^2+1}}f(\infty)\}>0,\ f(\infty)<0,\ \alpha\in(0,1),\ \beta>1,\ \gamma>0$.
Suppose in addition that there exists some constant $C_0$ such that (\ref{temp-9}) holds.
Then there for some $C>0$  such that (\ref{Condition-LinearGrowth}) holds.
\end{theorem}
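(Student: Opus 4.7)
The plan is to derive the linear growth of $|Du|$ from the one‑sided quadratic growth of $u$ by rescaling and applying the interior gradient estimate of Y.\,Y.~Li (Theorem~\ref{estimate_YYLi}) on unit balls obtained via this rescaling. The translation $v=u+\frac{a-b-\delta}{2}|x|^2$ already converts (\ref{equation_NONConstant}) into the strictly positive Hessian problem (\ref{temp-23}) with $D^2v\geq \delta I$, so the natural cone $\Gamma=\{\lambda\in\mathbb{R}^n:\lambda_i>-\delta\}$ contains the positive cone and all the ingredients needed for Theorem~\ref{estimate_YYLi} are available.

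First, I would localise. Fix $x_0\in\mathbb{R}^n$ with $|x_0|=R\gg 1$ and set
\begin{equation*}
v_R(y):=\frac{4}{R^2}\,v\!\left(x_0+\tfrac{R}{2}y\right),\qquad y\in\overline{B_1},
\end{equation*}
so that $D^2 v_R(y)=D^2 v(x_0+\tfrac{R}{2}y)$ has the same eigenvalues as $D^2 v$. Then $v_R$ solves
\begin{equation*}
G_\tau(\lambda(D^2 v_R(y)))=g_R(y):=g\!\left(x_0+\tfrac{R}{2}y\right)\quad\text{in }B_1,
\end{equation*}
with the same pointwise range of eigenvalues. From (\ref{Condition-QuadraticGrowth}) and the definition of $v$, $|v(x)|\leq C(1+|x|^2)$, hence $\|v_R\|_{L^\infty(B_1)}\leq C'$ uniformly in $R$ and $x_0$. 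Set $\widetilde{v}_R:=v_R-\|v_R\|_{L^\infty(B_1)}-1$, which is negative on $\overline{B_1}$ with $|\widetilde v_R(0)|\leq 2C'+1$ and with $D^2\widetilde v_R=D^2 v_R$, so it solves the same equation.

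Second, I would verify the hypotheses of Theorem~\ref{estimate_YYLi} uniformly in $R$. Take logarithms to rewrite the equation as
\begin{equation*}
H(\lambda):=\sum_{i=1}^n\ln\frac{\lambda_i+\delta}{\lambda_i+\delta+2b}=\ln g_R(y)=:\psi_R(y).
\end{equation*}
The operator $H$ is smooth, symmetric, strictly increasing in each $\lambda_i$, concave, and defined on the convex cone $\Gamma=\{\lambda_i>-\delta\}\supset\{\lambda_i>0\}$; as $\lambda\to 0$ one has $H(\lambda)\to n\ln\tfrac{\delta}{\delta+2b}>-\infty$, and $H(\lambda)\to -\infty$ as any $\lambda_i\downarrow -\delta$, which provides the boundary condition on $\partial\Gamma$. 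A direct computation gives $\partial_{\lambda_i}H=2b/[(\lambda_i+\delta)(\lambda_i+\delta+2b)]$, which blows up precisely where $\lambda_i\leq 0$, i.e.\ $\lambda_i+\delta\leq\delta$, so condition (D) in Theorem~\ref{estimate_YYLi} holds. For the right‑hand side, $\psi_R$ is bounded uniformly in $R$ because $f(\infty)<0$ and $f$ is bounded, and
\begin{equation*}
|D\psi_R(y)|=\tfrac{R}{2}\,|D\ln g(x_0+\tfrac{R}{2}y)|\leq C\cdot\tfrac{R}{2}\cdot|Df(x_0+\tfrac{R}{2}y)|,
\end{equation*}
which is uniformly bounded (indeed vanishing as $R\to\infty$) by the decay of $Df$ implied by (\ref{Holder-Condition}) (or more transparently by (\ref{1-orderCondition})), since $|x_0+\tfrac{R}{2}y|\geq R/2$ for $y\in B_1$.

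Third, Theorem~\ref{estimate_YYLi} then yields $|D\widetilde v_R(0)|\leq C$ with $C$ depending only on $n$, $a$, $b$, $\delta$, the quadratic growth constant in (\ref{Condition-QuadraticGrowth}), and the bounds on $f$; in particular $C$ is independent of $x_0$ and $R$. Scaling back, $Dv_R(0)=\tfrac{2}{R}Dv(x_0)$, hence
\begin{equation*}
|Dv(x_0)|=\tfrac{R}{2}\,|D\widetilde v_R(0)|\leq \tfrac{C}{2}\,|x_0|.
\end{equation*}
Since $v\in C^4(\mathbb{R}^n)$, $|Dv|$ is bounded on any compact set, so the estimate $|Dv(x)|\leq C(1+|x|)$ holds for all $x$. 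Returning to $u$ via $Du=Dv-(a-b-\delta)x$ gives (\ref{Condition-LinearGrowth}).

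The main obstacle is the third ingredient in the second step: checking that the structural hypotheses of Y.\,Y.~Li's estimate (in particular concavity and the structural inequality (D)) hold for the logarithmic operator $H$ on the cone $\Gamma$ with constants independent of $R$, and that the bound on $|D\psi_R|$ provided by the decay of $f$ is strong enough to feed into the estimate uniformly; once these are in hand, the rescaling is mechanical.
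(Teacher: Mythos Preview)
Your proposal is correct and follows essentially the same rescaling-and-apply-Li's-estimate strategy as the paper: localize on $B_{|x_0|/2}(x_0)$, rescale to a unit ball so that $D^2$ is preserved and $\|v_R\|_{L^\infty}$ is uniformly bounded by the quadratic growth, then invoke Theorem~\ref{estimate_YYLi} and scale back. If anything you are more careful than the paper's proof, which applies Li's estimate directly to $G_\tau$ without passing to the logarithmic form $H$ to ensure concavity and without explicitly shifting $v_R$ to make it negative; your remark that the gradient bound on $\psi_R$ really uses (\ref{1-orderCondition}) rather than only (\ref{Holder-Condition}) is also accurate.
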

\begin{proof}[Proof.]
For any sufficiently large $|x|\gg 1$, we consider the equation in $B_{\frac{|x|}{2}}(x)$ and let $$
\bolangxian{v}(y):=\dfrac{1}{(\frac{|x|}{2})^2}v(x+\dfrac{|x|}{2}y),\quad y\in B_1(0).
$$
Then we have $$
D^2\bolangxian{v}(y)=D^2v(x+\dfrac{|x|}{2}y),\quad\forall y\in B_1(0),
$$
hence it satisfies $$
G_{\tau}(D^2\bolangxian{v}(y))=g(x+\dfrac{|x|}{2}y)=:\psi(y),\quad\text{in } B_1(0).
$$
We   see that the right hand side term satisfies $$
|\psi|\leq \psi_1,\quad \psi\geq \psi_0,\quad\text{and}\quad
|\nabla\psi(y)|=\dfrac{|x|}{2}|\nabla g(x+\dfrac{|x|}{2}y)|\leq M,
$$
for some constants $\psi_1, \psi_0, M$  uniform to all $|x|\gg 1$ from the asymptotic behavior of $|\nabla g|$.

Also, we have $$
||\bolangxian{v}||_{L^{\infty}(B_1)}\leq \dfrac{1}{(\frac{|x|}{2})^2}
||v||_{L^{\infty}(B_{\frac{|x|}{2}}(x))}
\leq C_nC_0,
$$
which is also uniform with respect to $x\in\mathbb{R}^n$.

Now we apply the interior gradient estimate theorem i.e. Theorem \ref{estimate_YYLi} to see  that there exists a uniform (to $x$) constant $C$ such that $$
|D\bolangxian{v}(0)|\leq C,
$$
where C only relies on $n,f,\psi_1,M,\psi_0,C_0$. Hence the constant C is uniform to our choice of $|x|\gg 1$.

This is exactly telling us the gradient of $u$ has at most 1-order growth.  To be more precise, we have $$
|D\bolangxian{v}(0)|=\left|
\dfrac{2}{|x|}Dv(x)
\right|\leq C,\quad\text{uniform to}\quad|x|\gg 1.
$$
Hence there exists some constant $C$ such that condition (\ref{Condition-LinearGrowth}) holds true.
\end{proof}

Theorems \ref{lineargrowth_Result} and \ref{quadraticgrowth_Result} follow directly from Lemma \ref{lineargrowth} and Theorem \ref{gradientestimate}.

The scaling strategy used here is exactly the same as  reducing ``bounded Hessian'' to ``linear growth gradient'' condition.
Now we finish this section with proving that under the condition (\ref{Condition-LinearGrowth}), $|D^2u(x)|$ is bounded on entire $x\in\mathbb{R}^n$ using  compactness method as  in \cite{CompactnessMethod}.
To be more precise, we have the following result.
\begin{theorem}[Global Hessian Estimate]\label{mainestimate}Let $u$ be a smooth solution of (\ref{equation_NONConstant})
  with $$
  D^2u>(-a+b)I,
  $$
 where $f$  satisfies conditions (\ref{0-orderCondition}) and (\ref{Holder-Condition}) with $1$ replaced by some $f(\infty)<0,\ \alpha\in(0,1),\ \beta>1,\ \gamma>0$.
  Suppose that there exists some constant $C_0>0$ such that (\ref{Condition-LinearGrowth}) holds,
  then there exists $C=C(n,\alpha,C_0,f(\infty),\tau)$ such that (\ref{boundedHessian}) holds.
\end{theorem}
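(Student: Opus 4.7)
I plan to argue by contradiction via a blow-up compactness scheme in the spirit of McGonagle--Song--Yuan \cite{CompactnessMethod}, reducing through the Legendre transform of Section \ref{Section-ConstantSituation} to the classical J\"orgens--Calabi--Pogorelov Bernstein theorem. Suppose $D^2u$ were unbounded on $\mathbb{R}^n$; since $u\in C^4(\mathbb{R}^n)$, $D^2u$ is bounded on compact sets, so I can select $\{x_k\}$ with $|x_k|\to\infty$ and $|D^2u(x_k)|\to\infty$. Setting $R_k:=|x_k|/2$ and rescaling
\begin{equation*}
u_k(y):=\frac{u(x_k+R_ky)}{R_k^2},\qquad y\in\mathbb{R}^n,
\end{equation*}
preserves the Hessian, so $|D^2u_k(0)|=|D^2u(x_k)|\to\infty$, and $u_k$ solves
\begin{equation*}
F_\tau(\lambda(D^2u_k))=f(x_k+R_ky)=:f_k(y),\qquad D^2u_k>(-a+b)I.
\end{equation*}
Hypothesis (\ref{Condition-LinearGrowth}) yields $|Du_k(y)|\leq C(2+|y|)$, so the $u_k$ and $\bar u_k:=u_k+\tfrac{a+b}{2}|y|^2$ are uniformly Lipschitz and locally uniformly bounded, while the decay of $f-f(\infty)$ forces $f_k\to f(\infty)$ locally uniformly with $C^{\alpha}$-control on $\mathbb{R}^n$ away from one exceptional point.

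Next I apply the Legendre transform (\ref{LegendreTransform}) to each $\bar u_k$. Since $D^2\bar u_k>2bI$, the dual $\tilde u_k$ solves
\begin{equation*}
\det D^2\tilde u_k(\tilde y)=\exp\!\left\{\tfrac{2b}{\sqrt{a^2+1}}f_k\!\left(\tfrac{1}{2b}(\tilde y-D\tilde u_k(\tilde y))\right)\right\}=:g_k(\tilde y),
\end{equation*}
with eigenvalues $\tilde\lambda_i\in(0,1)$ by (\ref{property-Legendre}). Since $f$ is globally bounded and $f(\infty)<0$, $g_k$ is pinched in $[g_{\min},g_{\max}]\subset(0,1)$ uniformly in $k$; combined with $\tilde\lambda_j\leq 1$ this forces $\tilde\lambda_i\geq g_{\min}$, so $\tilde u_k$ is uniformly convex. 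Lemma \ref{lineargrowth} (with constants uniform in $k$) together with the surjectivity of $D\bar u_k$ on balls (guaranteed by $D^2\bar u_k>2bI$) shows that $D\bar u_k(\mathbb{R}^n)$ covers $\mathbb{R}^n$ up to a uniform offset. Caffarelli's interior $C^{2,\alpha}$ Monge-Amp\`ere theory then provides uniform $C^{2,\alpha}$ bounds for $\tilde u_k$ on compact subsets, and after a subsequence $\tilde u_k\to\tilde u_\infty$ in $C^{2,\alpha}_{\mathrm{loc}}(\mathbb{R}^n)$, producing a classical convex entire solution of $\det D^2\tilde u_\infty=g(\infty)$.

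The J\"orgens--Calabi--Pogorelov Bernstein theorem forces $\tilde u_\infty$ to be a quadratic polynomial with constant Hessian $\tilde A_\infty$ satisfying $\det\tilde A_\infty=g(\infty)<1$ and $\tilde A_\infty\leq I$. The blow-up assumption passes via (\ref{property-Legendre}) and the boundedness of $\{D\bar u_k(0)\}$ to $\tilde\lambda_i(\tilde A_\infty)=1$ for some $i$. Then the classical Legendre dual $v_\infty$ of $\tilde u_\infty$ satisfies $D^2v_\infty=(I-\tilde A_\infty)/(2b)$ and so is affine in the corresponding eigendirection, which forces its Legendre dual to equal $+\infty$ outside a proper affine hyperplane. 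But this Legendre dual coincides with $\bar u_\infty$, the locally uniform Lipschitz limit of the convex $\bar u_k$, which is finite everywhere. The contradiction rules out the blow-up and yields the asserted global bound, with constant depending only on $n,\alpha,C_0,f(\infty),\tau$.

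The main obstacle is the uniform $C^{2,\alpha}$-compactness of the $\tilde u_k$. It relies on three ingredients working simultaneously: (i) the two-sided pinch $\tilde\lambda_i\in[g_{\min},1]$ that makes Caffarelli's Monge-Amp\`ere estimate applicable with $k$-independent constants; (ii) the uniform form of the linear equivalence $|x|\asymp|\tilde x|$ from Lemma \ref{lineargrowth} ensuring that $D\bar u_k(\mathbb{R}^n)$ sweeps out arbitrarily large regions of $\mathbb{R}^n$; and (iii) the smooth-versus-Lipschitz handover between the $C^{2,\alpha}$ limit of $\tilde u_k$ and the $C^{0,1}$ limit of $\bar u_k$ needed to transport Bernstein rigidity back into a contradiction. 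The closing step, which globalizes the strip argument from the proof of Theorem \ref{ConstantRHS1} via Legendre duality, is what ultimately powers the contradiction.
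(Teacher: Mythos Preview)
Your approach is correct and takes a genuinely different route from the paper.  Both proofs argue by contradiction via compactness and the Legendre transform (\ref{LegendreTransform}), but the closing mechanisms differ.  The paper rescales to $B_1$, obtains only a uniform $W^{2,1}$ bound on the original side (so $Du_k\to Du_\infty$ in $L^1$ on sections), gets $C^{2,\alpha}$ bounds for $\tilde u_k$ on the fixed ball $B_{b/2}$, and then invokes the \emph{constant rank theorem} of Caffarelli--Guan--Ma to propagate the Hessian degeneracy forced by the blow-up throughout a neighborhood of the origin, which contradicts the $L^1$-graph property of $Du_\infty$.  You instead work \emph{globally}: you produce an entire convex solution of $\det D^2\tilde u_\infty=g(\infty)$, appeal to J\"orgens--Calabi--Pogorelov to get that $\tilde u_\infty$ is quadratic, and then run the strip/Legendre-duality argument to contradict the finiteness of $\bar u_\infty$.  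Your endgame is more elementary (no constant rank theorem), but the paper's local approach sidesteps a genuine technical wrinkle in yours: with $R_k=|x_k|/2$, the rescaled point $y_k^\ast=-2x_k/|x_k|\in\partial B_2$ pulls back to the origin, so $f_k$ loses uniform $C^\alpha$-control near $y_k^\ast$ and your asserted $C^{2,\alpha}_{\mathrm{loc}}(\mathbb R^n)$-convergence of $\tilde u_k$ is not available near the corresponding $\tilde y_k^\ast$.  This is fixable---the uniform pinch $g_{\min}I\le D^2\tilde u_k\le I$ gives $C^{1,1}$-compactness everywhere, weak continuity of the Monge--Amp\`ere measure then yields an entire Alexandrov solution with constant right-hand side, and interior regularity upgrades it to classical---but it is a real step that your write-up glosses over, whereas the paper never leaves $B_{b/2}$ and so never sees the exceptional point.
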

\begin{proof}[Proof.]
Based on Lemma \ref{lineargrowth}, we learn that under the condition that gradient $|Du(x)|$ has at most linear growth, there exist $\delta=\delta\left(n, C_{0}, \tau\right), M=M\left(n, C_{0}, \tau, u\right)>0$ such that
$$
|(\widetilde{x}-D \widetilde{u}(\widetilde{x}))| \geq \delta|\widetilde{x}|, \quad \forall|\widetilde{x}| \geq M.
$$

Now for sufficiently large $|x|\gg 1$, we consider in the ball $B_{\frac{|x|}{2}}(x)$, by setting $$
v(y):=\dfrac{4}{|x|^2}u(x+\dfrac{|x|}{2}y),\quad\forall y\in B_1(0),
$$
then $v$ satisfies $$
||Dv||_{C^0(\overline{B_1})}\leq C_0+1,
$$ and  $$
F_{\tau}(\lambda(D^2v))=\frac{2b}{\sqrt{a^2+1}}f(x+\dfrac{|x|}{2}y)=:g(y),\quad\forall y\in B_1(0).
$$
From our decay condition, we can easily prove that $$||g||_{C^{\alpha}(B_1)}\leq M,$$
uniformly.

Now we only need to prove that $$
|D^2v(0)|\leq C(M+1),
$$
for some constant M uniform to $x,v,g$. This is proved through argue by contradiction.

\textit{Step 1: Argue by Contradiction, $L^1$-convergence result}

Suppose the result doesn't hold, then there exist  sequences of smooth functions $\{u_k\}_{k=1}^{\infty}, \{g_k\}_{k=1}^{\infty}$ with $$D^2u_k>(-a+b)I,\quad\forall\ k=1,\cdots,$$
satisfying $$F_{\tau}(\lambda(D^2u_k))=g_k(y),\text{ in }B_1,$$
$$
||g_k||_{C^{\alpha}(B_1)}\leq M,
$$
$$
|D^2u_k(0)|>k\left(||Du_k||_{L^{\infty}(B_1)}+1\right),
$$
and having  uniform (to $k$) bounded $$||Du_k||_{L^{\infty}(B_1)}
\leq ||Du||_{L^{\infty}(B_1)}\leq C_0+1.
$$

Then using integral by parts, we obtain the following $W^{2,1}$ estimate.  Let $B_{1}^{m}$ denote the $m$ dimensional ball $B_{1}^{m}(0) \subset B_{1}=B_{1}^{n}(0) \subset \mathbb{R}^{n}$ for all $m=1, \cdots, n$ and $B^m_1(x):=B_1^m+x$ is the ball centered at $x$. Note that we only need to verify that every component of $D^2u_k$ belongs to $L^1$. Hence we use Fubini theorem and for any positive $n\times n$ matrix we have $$
||A||:=\sup_{|x|=1}|Ax|\leq \sum_{i=1}^n\lambda_i(A),
$$ it follows that
$$
\begin{array}{llllll}
  \displaystyle\int_{B_1}||D^2u_k+(a-b)I||\d y & = & \displaystyle \int_{B_{\sqrt{1-|x|^2}}^{n-m}(x)}\int_{B_r^m}||D^2u_k+(a-b)I||\d x \d r\\
  & \leq &\displaystyle C_n \int_{B_{\sqrt{1-|x|^2}}^{n-m}(x)}\int_{B_r^m}(\Delta u_k+C_n(a-b))\d x \d r\\
  & \leq & \displaystyle C_n(a-b)+\int_{B_1}\Delta u_k\d x.\\
\end{array}
$$
Integral by parts to the formula above and since the solutions $\{u_k\}$ are smooth up to boundary, hence $$
\begin{array}{llll}
  ||u_k||_{W^{2,1}(B_1)} & \leq & C_{n}(a-b)+\int_{\partial B_{1}} D u_{k} \cdot \vec{n} \mathrm{d} x\\
  &\leq & C_n((a-b)+||D u_k||_{L^{\infty}(B_1)})\leq C.\\
\end{array}
$$

Thus by the compact Sobolev Embedding $u_{k} \in W^{2,1}\left(B_{1}^{m}\right) \hookrightarrow W^{1,1}\left(B_{1}^{m}\right)$,
 for almost all $H^m-$section of $B_1$, meaning that for $H^{n-m}$-almost all $\left(x_{m+1}, \cdots, x_{n}\right)$ in $B_{1} \cap \mathbb{R}^{n-m}$, there exists a function $u_{\infty}\in W^{1,1}(B_1^m)$ such that $$
D u_{k} \rightarrow D u_{\infty} \text { in } L^{1}\left(B_{1}^{m}\right),\quad\text{as}\quad
k\rightarrow\infty.
$$

\textit{Step 2: Legendre-Lewy Transform}

Using Legendre transform as in (\ref{LegendreTransform}),  the equation $F_{\tau}(\lambda(D^2u_k))=g_k(y)$ becomes a uniformly elliptic Monge-Amp\`ere type equation (\ref{transformed}).   We assume without loss of generality that $D\overline{u}(0)=0$ by modifying a suitable linear function from u, which doesn't affect the equation at all.

As in Section \ref{Section-ConstantSituation}, $\widetilde{x}\in Du(B_1)+(a+b)B_1\supset B_b(0)$, $\widetilde{u_k}$ satisfies
\begin{equation}\label{temp-transformed}
\sum_{i=1}^{n} \ln \widetilde{\lambda_i}(D^2\widetilde{u_k})
=g_k\left(\frac{1}{2 b}(\widetilde{x}-D \widetilde{u_k}(\widetilde{x}))\right),\quad \bolangxian{x}\in B_b(0),
\end{equation}
and $$
\widetilde{\lambda}_{i}(D^2\widetilde{u_k})=1-\frac{2 b}{\lambda_{i}(D^2u_k)+a+b}\in (0,1).
$$
From (\ref{ExistenceOfDelta}), there exists $\delta>0$ such that $$ 0<\delta<\widetilde{\lambda}_{i}<1.
$$

Hence we can easily see that the equation after Legendre transform (\ref{temp-transformed}) is uniformly elliptic and concave.

\textit{Step 3: Uniform $C^{2,\alpha}$ Estimate}

By Lemma \ref{lineargrowth}, we see that under the condition (\ref{Condition-LinearGrowth}), we still have the important equivalence result (\ref{linear-of-X}). Due to $||g_k||_{C^{\alpha}(B_1)}\leq M$, we obtain that
\begin{equation*}
\left\|g_{k}\left(\frac{1}{2 b}(\widetilde{x}-D \widetilde{u_{k}}(\widetilde{x}))\right)\right\|_{C^{\alpha}(B_b)}\leq CM,
\end{equation*}
where $C=C(n,a,b,C_0)$.
By Schauder estimates of fully nonlinear elliptic equations, Theorem 8.1 and the Remark 3 after it in \cite{FullyNonlinear} tells us $$||\widetilde{u_k}||_{C^{2,\alpha}(B_{\frac{b}{2}}(0))}\leq C,$$
for some uniformly to $k$ constant $C$.

Thus by compact embedding for any $0<\varepsilon<\alpha$, $C^{2,\alpha}\hookrightarrow\hookrightarrow C^{2,\alpha-\varepsilon}$ , there exist a subsequence of $\widetilde{u_{k}},$ still denoted by $\widetilde{u_{k}},$ and $ g_{\infty}\in C^{2,\alpha-\varepsilon}(B_{\frac{b}{4}}(0))$, $\widetilde{u_{\infty}} \in C^{2,\alpha-\varepsilon}(B_{\frac{b}{4}}(0))$ such that  $$
\sum_{i=1}^{n} \ln \widetilde{\lambda}_{i}(D^2\widetilde{u_{\infty}}(x))
=g_{\infty}\left(\frac{1}{2 b}(\widetilde{x}-D \widetilde{u_{\infty}}(\widetilde{x}))\right),\quad \bolangxian{x}\in B_{\frac{b}{4}}(0).
$$

Due to $$
\left|D^{2} u_{k}(0)\right| \rightarrow \infty,\quad\text{as}\quad k\rightarrow\infty,
$$
hence we learned from Legendre transform that for some direction $\gamma\in\partial B_1$, $
D_{\gamma\gamma}\bolangxian{u_{\infty}}=0,
$
we may name this direction as ``$x_1$'' direction for simplicity.

\textit{Step 4: Contradicts Constant Rank Theorem}

The following constant rank theorem by Caffarelli-Guan-Ma \cite{constantrank} plays an important role in proving the result.

\begin{theorem}Let $\Psi \subset \mathbb{R}^{n}$ be an open symmetric domain, assume the operator $F(D^2u)=f(\lambda(D^2u))$ satisfies $f \in C^{2}(\Psi)$ symmetric and $$
f_{\lambda_{i}}(\lambda)=\frac{\partial f}{\partial \lambda_{i}}(\lambda)>0, \quad \forall i=1, \ldots, n, \quad \forall \lambda \in \Psi.
$$
Define $\widetilde{F}(A)=F\left(A^{-1}\right)$ whenever $A^{-1} \in \widetilde{\Psi}$, and we assume $\widetilde{F}$ is locally convex.
Assume $u$ is a $C^{3}$ convex solution of
the following equation in a domain $\Omega$ in $\mathbb{R}^{n} $
$$
F\left(D^2u(x)\right)=\varphi(x, u(x),D u(x)), \quad \forall x \in \Omega,
$$
for some $\varphi \in C^{1,1}\left(\Omega \times \mathbb{R} \times \mathbb{R}^{n}\right) .$ If $\varphi(x, u, p)$ is concave in $\Omega \times \mathbb{R}$ for any fixed
$p \in \mathbb{R}^{n},$ then the Hessian $D^2u$ has constant rank in $\Omega .$
\end{theorem}
Classical Hessian equations satisfies this condition, for example, all functions of $f(\lambda)=\sigma_k^{\frac{1}{k}}$ holds.  (Remark 1.7 of \cite{constantrank}).
Due to the rank of Hessian matrix is a constant inside, hence $D_{11} \widetilde{u_{\infty}}(y) \equiv 0$ in a neighborhood of 0. Hence on the hypersurface of $\left(x_{1}, y_{1}\right)$, we have $$
\left(D_{1} \widetilde{w_{\infty}}\left(y_{1}, y^{\prime}\right), y_{1}\right)=\left(c, y_{1}\right)=\left(x_{1}, D_{1} u_{\infty}\left(x_{1}, x^{\prime}\right)+ (a-b)x_{1}\right) \quad \text { near }(0,0).
$$
This is impossible as Step 1 tells us $\left(x_{1}, D_{1} u_{\infty}\left(x_{1}, x^{\prime}\right)+ (a-b)x_{1}\right)$ is an $L^1$ graph (for almost all $x'\in\mathbb{R}^{n-1}$).
\end{proof}

\section{Perturbed Results for $\tau=\frac{\pi}{4}$}\label{section-middle}
In this section, we prove the corresponding results for $\tau=\frac{\pi}{4}$ by similar strategy as in previous sections.
The proof is separated into the following two parts. First, as in Section \ref{ConvergenceOfHessian}, we prove the asymptotic behavior of $D^2u$ under additional assumption that Hessian $D^2u$ is bounded. Then the same argument as in Section \ref{ProofofResult1} and Section \ref{ProofOfResult2} shows us the desired result. Second, we reduce the assumption from bounded Hessian to linear growth of gradient and quadratic growth of $u$ as in Section \ref{InteriorEstimates}.

\subsection{Limit of Hessian}

Let $u\in C^4(\mathbb{R}^n)$ be a solution of (\ref{equation_middletau}) with $\tau=\frac{\pi}{4}$ and satisfy condition (\ref{boundedHessian_2}). We apply the same Legendre transform (\ref{LegendreTransform2}) as in Section \ref{Section-ConstantSituation}, then we see that $V(\widetilde{x})$ satisfies $$
\Delta V=-\frac{\sqrt{2}}{2}f(DV(\widetilde{x}))=:g(\widetilde{x}),\quad\text{in}\quad
\mathbb{R}^n.
$$
First, we prove the limit of Hessian at infinity result that corresponds to Theorem \ref{LimitofHessian}.
\begin{theorem}\label{LimitofHessian-midtau} Assume that $f$ satisfies
\begin{equation}\label{condition-f-midtau}
\limsup_{|x|\rightarrow\infty}|x|^{\varepsilon_0+k}|D^k(f(x)-f(\infty))|<\infty,\ \forall k=0,1,
\end{equation}
for some $\varepsilon_0>0$. Then there exist $A\in\mathcal{A}(f(\infty),-1), \alpha>0,\ C(n,f,\varepsilon_0)$ and $R_2(n,f,\varepsilon_0)$ such that
$$
|D^2u(x)-A|\leq \dfrac{C}{|x|^{\alpha}},\quad\forall |x|\geq R_2.
$$
\end{theorem}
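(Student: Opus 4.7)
The plan is to mirror the proof of Theorem \ref{LimitofHessian} from Section \ref{ConvergenceOfHessian}, with the substantial simplification that the Legendre transform (\ref{LegendreTransform2}) converts (\ref{equation_middletau}) into a Poisson-type equation $\Delta V = g$ rather than a Monge--Amp\`ere equation. This replaces the Caffarelli--Li / Bao--Li--Zhang exterior estimate (Theorem \ref{corollary_estimate3}) by an essentially elementary Schauder-rescaling argument applied to the Laplacian, and then the result for $u$ follows by transforming back.

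First I would verify that the Legendre transform is globally well-defined and bi-Lipschitz. The hypothesis $-I < D^2 u \leq MI$ gives $0 < D^2\overline{U} \leq (M+1)I$, while the equation itself yields $\widetilde{\lambda}_i(D^2 V) = 1/(1+\lambda_i(D^2 u))$, and the identity $\sum_i \widetilde{\lambda}_i = -f/\sqrt{2}$ combined with boundedness of $f$ (which follows from (\ref{condition-f-midtau})) delivers a two-sided bound $\delta I \leq D^2 V \leq C I$ for some $\delta, C > 0$. This makes $D\overline{U}$ bi-Lipschitz on $\mathbb{R}^n$, so the linear equivalence $|\widetilde{x}| \sim |x|$ at infinity holds exactly as in (\ref{linear-of-X}). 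Then condition (\ref{condition-f-midtau}) on $f$ transfers, via this equivalence and the chain rule together with the bound on $D^2 V$, to the analogous condition on $g(\widetilde{x}) := -\frac{\sqrt{2}}{2} f(DV(\widetilde{x}))$: namely $|g - g(\infty)| = O(|\widetilde{x}|^{-\varepsilon_0})$ and $|Dg| = O(|\widetilde{x}|^{-\varepsilon_0 - 1})$.

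Next, to obtain a Hessian limit for the linear equation $\Delta V = g$, I would emulate the rescaling in the proof of Theorem \ref{corollary_estimate3}. For $R = |\widetilde{x}_0| \gg 1$ set $V_R(y) = (4/R)^2 V(\widetilde{x}_0 + \tfrac{R}{4} y)$ and, subtracting the canonical quadratic part, $W_R(y) := V_R(y) - \tfrac{g(\infty)}{2n}|y|^2$ on $B_2$. Then $\Delta W_R = g_R - g(\infty)$, and interpolation between the $C^0$ and $C^1$ decay of $g - g(\infty)$ gives $\|g_R - g(\infty)\|_{C^\alpha(\overline{B_2})} \leq C R^{-\min(\varepsilon_0,1)}$. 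Classical interior Schauder estimates for the Laplacian applied to $W_R$ provide a uniform $C^{2,\alpha}$ bound. By compactness any subsequential limit is harmonic with bounded Hessian on $\mathbb{R}^n$, hence a quadratic polynomial with traceless Hessian (by Liouville applied to second derivatives), and an iteration exploiting the constant-coefficient Newton-potential representation allows comparison of rescalings at different scales and yields a single matrix $\widetilde{A}$ with $\mathrm{tr}(\widetilde{A}) = g(\infty)$ satisfying $|D^2 V(\widetilde{x}) - \widetilde{A}| \leq C|\widetilde{x}|^{-\alpha}$ for some $\alpha > 0$.

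Finally I would transform back. Since $D^2 u + I = (D^2 V)^{-1}$ and $\widetilde{A}$ inherits the strict bounds $\delta I \leq \widetilde{A} \leq C I$, the matrix $A := \widetilde{A}^{-1} - I$ is well-defined with $A > -I$. A direct computation $\sum_i 1/(1+\lambda_i(A)) = \sum_i \widetilde{\lambda}_i(\widetilde{A}) = \mathrm{tr}(\widetilde{A}) = g(\infty) = -f(\infty)/\sqrt{2}$ shows $F_{\pi/4}(A) = f(\infty)$, so $A \in \mathcal{A}(f(\infty), -1)$. The Lipschitz property of matrix inversion on a compact set of positive-definite matrices, combined with the equivalence $|\widetilde{x}| \sim |x|$, then transfers the rate into $|D^2 u(x) - A| \leq C|x|^{-\alpha}$. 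The main obstacle is the uniqueness of the subsequential limit $\widetilde{A}$ in the compactness step; this is precisely where the linearity of the Laplacian is crucial, as it permits the comparison between rescalings at different scales that the Caffarelli--Li level-set normalization was designed to supply in the Monge--Amp\`ere case of Theorem \ref{LimitofHessian}.
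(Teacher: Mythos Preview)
Your proposal is correct in outline and matches the paper's proof in the Legendre-transform setup, the two-sided eigenvalue bounds yielding the bi-Lipschitz equivalence $|\widetilde{x}|\sim|x|$, the transfer of the decay conditions from $f$ to $g$, and the final inversion back to $D^2u$. The genuine difference is in how you extract the Hessian limit for the Poisson equation $\Delta V=g$. You proceed via rescaling plus compactness, and you correctly flag that uniqueness of the subsequential limit is the crux, to be resolved by a Newton-potential comparison between scales. The paper instead uses that Newton potential as a direct \emph{subtraction}: it writes $V=w+v+h$, where $w=-\tfrac{\sqrt{2}f(\infty)}{4n}|\widetilde{x}|^2$ is a fixed quadratic with $\Delta w=g(\infty)$, $v:=K*(g-g(\infty))$ is the convolution with the fundamental solution (so that $|D^2v|\le C|\widetilde{x}|^{-\varepsilon_0}$ by the $E_1,E_2,E_3$ splitting of Lemma~\ref{existence}), and $h:=V-w-v$ is harmonic on $\mathbb{R}^n$ with bounded Hessian. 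Liouville applied to each entry of $D^2h$ (or equivalently Theorem~\ref{ExteriorLiouville_C}) then forces $D^2h\equiv\widetilde{A}$ to be constant, giving both the limit and the explicit rate $|D^2V-\widetilde{B}|=O(|\widetilde{x}|^{-\min(\varepsilon_0,n)})$ in one stroke. This decomposition bypasses the compactness machinery and the uniqueness issue entirely; your route can be made to work, but the step you call the ``main obstacle'' needs exactly this potential-theoretic input, and the paper simply applies it up front rather than inside an iteration.
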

\begin{proof}[Proof]

  By condition (\ref{boundedHessian_2}), we see that $$
  \widetilde{\lambda}_i(D^2\widetilde{u})=\frac{1}{\lambda_i(D^2u)+1}\geq \dfrac{1}{M+1}>0,\quad\forall i=1,2,\cdots,n.
  $$
  Also note that $f\in C^0(\mathbb{R}^n)$ has a finite lower bound. Due to $\widetilde{\lambda}_i>0$ holds for all $i=1,2,\cdots,n$, we also have the following $$
  \widetilde{\lambda}_i<\sum_{j=1}^n\widetilde{\lambda}_j=-\frac{\sqrt{2}}{2}f(DV(\widetilde{x}))
  \leq -\frac{\sqrt{2}}{2}\inf_{\mathbb{R}^n}f<\infty.
  $$
  Combine these two results, we see that \begin{equation}\label{temp-20}
  0<\dfrac{1}{M+1}\leq \widetilde{\lambda}_i\leq - \frac{\sqrt{2}}{2}\inf_{\mathbb{R}^n}f<\infty.
  \end{equation}
  As a consequence, there exists a constant $\delta=\delta(\tau,\inf_{\mathbb{R}^n}f)>0$ such that $D^2u>-I+\delta I$. More explicitly, we can pick any $\delta<-\frac{\sqrt{2}}{\inf_{\mathbb{R}^n}f}$, the result holds.

  Now we prove a similar equivalence result as formula (\ref{linear-of-X}). By the definition of Legendre transform (\ref{LegendreTransform2}), we see that for any $x^1, x^2\in \mathbb{R}^n$, $$
  |\widetilde{x^1}-\widetilde{x^2}|=|Du(x^1)-Du(x^2)+(x^1-x^2)|\geq \delta|x^1-x^2|,
  $$
  and similarly we have $$
  |\widetilde{x^1}-\widetilde{x^2}|=|Du(x^1)-Du(x^2)+(x^1-x^2)|\leq (M+1)|x^1-x^2|.
  $$
  Combine these two formulas above and we see that there exists some constant $C_0=C_0(\inf_{\mathbb{R}^n}f,\tau,M)$ such that (\ref{linear-of-X}) holds true again.

  Based on the equivalence (\ref{linear-of-X}), we see that $$
  \limsup_{|\widetilde{x}|\rightarrow\infty}g(\widetilde{x})=-\frac{\sqrt{2}}{2}f(\infty),
  $$
  and
  \begin{equation}\label{temp-17}
  \limsup_{|\widetilde{x}|\rightarrow\infty}|\widetilde{x}|^{\varepsilon_0+1}\cdot |Dg(\widetilde{x})|<\infty.
  \end{equation}

  Let $w(\widetilde{x}):=-\frac{\sqrt{2}f(\infty)}{4n}|x|^2$, which satisfies $\Delta w=-\frac{\sqrt{2}}{2}f(\infty)$ in $\mathbb{R}^n$. Then we consider the difference $\overline{v}(\widetilde{x}):=V-w$, satisfying $$
  \Delta\overline{v}=-\frac{\sqrt{2}}{2}(f(DV(\widetilde{x}))-f(\infty))=:\overline{g}(\widetilde{x}),
  \quad\text{in}\quad\mathbb{R}^n.
  $$

  Let $K(\widetilde{x}):=G_{\Delta}$ be the fundamental solution (or Green's function) of Laplacian, then as in Lemma \ref{existence}, we consider the auxiliary function $v\in C^2(\mathbb{R}^n)$ (by Schauder estimates and the regularity of $\overline{g}(\widetilde{x})$) $$
  v(\widetilde{x}):=K*\overline{g}(\widetilde{x})=\int_{\mathbb{R}^n}K(\widetilde{x}-\widetilde{y})g(\widetilde{y})
  \d\widetilde{y},
  $$
  which satisfies $\Delta v=\overline{g}$ in $\mathbb{R}^n$ and $v\rightarrow 0$ as $|\widetilde{x}|\rightarrow\infty$.

  As in the proof of Lemma \ref{existence}, we separate the domain into $E_1,E_2,E_3$ and see that $D^2v\rightarrow 0$ as $|\widetilde{x}|\rightarrow\infty$. More explicitly, there exists some constant $C$ such that
  \begin{equation}\label{temp-18}
  |D^2v|\leq C|x|^{1-(1+\varepsilon_0)}=C|x|^{-\varepsilon_0},\quad\forall x\in\mathbb{R}^n.
  \end{equation}

  Also note that $v\in C^2(\mathbb{R}^n)$, hence $D^2v$ is bounded on entire $\mathbb{R}^n$

  Hence we see that $$
  \Delta (V-w-v)=0,\quad\text{in}\quad\mathbb{R}^n,
  $$
  and due to $D^2V, D^2w, D^2v$ are all bounded on $\mathbb{R}^n$, hence so is $D^2(V-w-v)$.
  Applying Theorem \ref{ExteriorLiouville_C}, we see that the limit of $D^2(V-w-v)$ at infinity exists. (Or we can take twice derivatives to both sides and apply Liouville theorem). More explicitly, picking $k=2$ in formula (\ref{ConvergenceSpeed1}), there exists $$\widetilde{A}\in\{\widetilde{A}\in\mathtt{Sym}(n):\sum_{i=1}^n\lambda_i(\widetilde{A})=0,
  \dfrac{1}{M+1}I+\frac{\sqrt{2}f(\infty)}{2n}I\leq \widetilde{A}\leq \frac{\sqrt{2}}{2}\inf_{\mathbb{R}^n}fI+\frac{\sqrt{2}f(\infty)}{2n}I
  \},$$ such that  \begin{equation}\label{temp-19}
  \limsup_{|\widetilde{x}|\rightarrow \infty}|\widetilde{x}|^n\left|D^2(V-w-v)-\widetilde{A}\right|<\infty.
  \end{equation}
  Combine formula (\ref{temp-18}), (\ref{temp-19}) and the fact that $D^2w\equiv -\frac{\sqrt{2}f(\infty)}{2n}I$, it follows immediately that $\widetilde{B}:=\widetilde{A}-\frac{\sqrt{2}f(\infty)}{2n}I$ belongs to the set of $$\{\widetilde{B}:\sum_{i=1}^n\lambda_i(\widetilde{B})=-\frac{\sqrt{2}}{2}f(\infty),
  \dfrac{1}{M+1}I\leq\widetilde{B}\leq \dfrac{\sqrt{2}}{2}\inf_{\mathbb{R}^n}fI
  \},$$ such that $$
  \limsup_{|\widetilde{x}|\rightarrow\infty}|\widetilde{x}|^{\min\{\varepsilon_0,n\}}
  |D^2V-\widetilde{B}|<\infty.
  $$
  By the definition of Legendre transform (\ref{LegendreTransform2}) and the equivalence (\ref{linear-of-X}), we pick $\alpha:=\min\{n,\varepsilon_0\}$, the desired result follows immediately.

\end{proof}

\begin{proof}[Proof of Theorems \ref{result3} and  \ref{result4}]
  As in the calculus of Theorem \ref{LimitofHessian-midtau}, formula (\ref{temp-20}) and the Legendre transform (\ref{LegendreTransform2}), $$
  \dfrac{\sqrt{2}}{\inf_{\mathbb{R}^n}f}\leq \lambda_i(D^2u)\leq M.
  $$

  By direct calculus, we have $$
  \dfrac{\partial}{\partial\lambda_i}\left(\sum_{i=1}^n\dfrac{1}{\lambda_i+1}\right)
  =-\dfrac{1}{(1+\lambda_i)^2}\in (-\frac{1}{(\frac{\sqrt{2}}{\inf_{\mathbb{R}^n}f}+1)^2},-\dfrac{1}{(1+M)^2}),
  $$
  meaning that the equation (\ref{equation_middletau}) is uniformly elliptic. Also, we have $$
  \dfrac{\partial^2}{\partial\lambda_i^2}\left(\sum_{i=1}^n\dfrac{1}{\lambda_i+1}\right)
  =\dfrac{2}{(1+\lambda_i)^3}>0,
  $$
  hence the equation is also convex. Consider $$
  -F_{\tau}(D^2u)=-f(x),\quad\text{in}\quad\mathbb{R}^n,
  $$
  then apply the same argument as in the proof of Theorems \ref{result1} and  \ref{result2}, similar results also hold true.
\end{proof}

\subsection{Interior Estimates}

In this part, we reduce the strong assumption (\ref{boundedHessian_2}) into weaker ones. Similar to the proof of Lemma \ref{lineargrowth}, it holds true under this situation as well.

Now we consider solution of (\ref{equation_middletau}) with $\tau=\frac{\pi}{4},\ D^2u>-I$ and we assume that $f$ satisfies condition (\ref{condition-f-midtau}) for some $\varepsilon_0>0,\ f(\infty)<0$.
The proof is an explicit copy, we only used the ellipticity and concavity of equation. Hence Theorems \ref{lineargrowth_Result_2} and  \ref{quadraticgrowth_Result_2} follow immediately.

As in the proof of Theorem \ref{LimitofHessian-midtau} or Lemma \ref{ExistenceOfDelta}, we see that there exists a $\delta=\delta(\tau,\inf_{\mathbb{R}^n}f)>0$ such that $$
\lambda_i(D^2u)>-1+\delta.
$$
Then we set $$
v:=u+\dfrac{1-\delta}{2}|x|^2,\quad\text{then}\quad
\lambda_i(D^2v)=\lambda_i(D^2u)+1-\delta>0.
$$
And we see that $v$ satisfies $$
\sum_{i=1}^n\dfrac{1}{\lambda_i(D^2v)+\delta}=-\frac{\sqrt{2}}{2}f(x),\quad\text{in}\quad
\mathbb{R}^n.
$$
By verifying the conditions in Theorem \ref{estimate_YYLi}, we do scaling as in the proof of Theorem \ref{gradientestimate},
Theorem \ref{lineargrowth_Result_2} follows from
similar argument as in Theorem \ref{lineargrowth_Result}.

Note that the equation after Legendre-Lewy transform becomes Laplacian operator, standard Schauder estimates still hold true. The estimates to prove Theorem \ref{quadraticgrowth_Result_2} follows by similar compactness argument as in Theorem \ref{quadraticgrowth_Result}.

\small

%\bigskip

%\noindent J. Bao \& Z. Liu

%\medskip

%\noindent  School of Mathematical Sciences, Beijing Normal University\\
%Laboratory of Mathematics and Complex Systems, Ministry of Education\\
%Beijing 100875, China \\[1mm]
%Email: \textsf{jgbao@bnu.edu.cn, liuzixiao@mail.bnu.edu.cn}

\end{document}